\newcolumntype{2}{D{.}{}{2.0}}
\newcommand{\cC}{\mathcal{C}}
\patchcmd{\@setaddresses}{\indent}{\noindent}{}{}
\patchcmd{\@setaddresses}{\indent}{\noindent}{}{}
\patchcmd{\@setaddresses}{\indent}{\noindent}{}{}
\patchcmd{\@setaddresses}{\indent}{\noindent}{}{}
\def\user@resume{resume}
\def\user@intermezzo{intermezzo}
\newcounter{previousequation}
\newcounter{lastsubequation}
\newcounter{savedparentequation}
\renewenvironment{subequations}[1][]{%
      \def\user@decides{#1}%
      \setcounter{previousequation}{\value{equation}}%
      \ifx\user@decides\user@resume 
           \setcounter{equation}{\value{savedparentequation}}%
      \else  
      \ifx\user@decides\user@intermezzo
           \refstepcounter{equation}%
      \else
           \setcounter{lastsubequation}{0}%
           \refstepcounter{equation}%
      \fi\fi
      \protected@edef\theHparentequation{%
          \@ifundefined {theHequation}\theequation \theHequation}%
      \protected@edef\theparentequation{\theequation}%
      \setcounter{parentequation}{\value{equation}}%
      \ifx\user@decides\user@resume 
           \setcounter{equation}{\value{lastsubequation}}%
         \else
           \setcounter{equation}{0}%
      \fi
      \def\theequation  {\theparentequation  \alph{equation}}%
      \def\theHequation {\theHparentequation \alph{equation}}%
      \ignorespaces
}{%
  \ifx\user@decides\user@resume
       \setcounter{lastsubequation}{\value{equation}}%
       \setcounter{equation}{\value{previousequation}}%
  \else
  \ifx\user@decides\user@intermezzo
       \setcounter{equation}{\value{parentequation}}%
  \else
       \setcounter{lastsubequation}{\value{equation}}%
       \setcounter{savedparentequation}{\value{parentequation}}%
       \setcounter{equation}{\value{parentequation}}%
  \fi\fi
  \ignorespacesafterend
}
  \def\tens{\mathop{\otimes}} 
  \def\<{{\langle}} 
  \def\>{{\rangle}}
  \def\eps{\varepsilon}
  \def\note#1{{}}
  \def\note#1{} 
  \def\cC{{\mathcal C}}
  \def\lhom#1#2#3{{}{{\rm Hom}\sb{#1}\left(#2,#3\right)}} 
  \def\rhom#1#2#3{{{\rm Hom}\sb{#1}\left(#2,#3\right)}}
\newcommand{\ring}{{\rm\bf Ring}} 
  \def\beq{\begin{equation}} 
  \def\eEq{\end{equation}}
\newcommand{\im}{{\rm Im}}
  \def\Hom{\mbox{\rm Hom}\,}
  \newcounter{blist} 
  \newenvironment{blist}{\begin{list}{(\alph{blist})}{ 
  \usecounter{blist}\leftmargin2.5em\labelwidth2em\labelsep0.5em 
  \topsep0.6ex 
  \parsep0.3ex plus0.2ex minus0.1ex}}{\end{list}} 
\def\stac#1{\raise-.2cm\hbox{$\stackrel{\displaystyle\otimes}{\scriptscriptstyle{#1}}$}}
\def\cten#1{\raise-.2cm\hbox{$\stackrel{\displaystyle\widehat{\otimes}}
{\scriptscriptstyle{#1}}$}}
  \def\Label#1{\label{#1}\ifmmode\llap{[#1] }\else 
  \marginpar{\smash{\hbox{\tiny [#1]}}}\fi} 
  \def\Label{\label}
  \newtheorem{proposition}{Proposition}[section]
  \newtheorem{lemma}[proposition]{Lemma} 
  \newtheorem{corollary}[proposition]{Corollary} 
  \newtheorem{theorem}[proposition]{Theorem} 
\theoremstyle{definition} 
  \newtheorem{definition}[proposition]{Definition}
  \newtheorem{example}[proposition]{Example}
  \theoremstyle{remark} 
  \newtheorem{remark}[proposition]{Remark}
   \numberwithin{equation}{section}
\newcommand{\Mod}{{\mathbf{mod}}}
\newcommand{\ot}{\otimes}
\newcommand{\NN}{{\mathbb N}}
\newcommand{\TT}{{\mathbb T}}
\newcommand{\ZZ}{{\mathbb Z}}
\newcommand{\gG}{\mathrm{G}}
\newcommand{\hH}{\mathrm{H}}
\newcommand{\tT}{\mathrm{T}}
\newcommand{\Aa}{\mathcal{A}}
\newcommand{\Ff}{\mathcal{F}}
\newcommand{\Hh}{\mathcal{H}}
\newcommand{\Tt}{\mathcal{T}}
\newcommand{\Uu}{\mathcal{U}}
\newcommand{\Set}{\mathbf{Set}}
\newcommand{\ahrd}{\mathbf{Ah}}
\newcommand{\trs}{\mathbf{Trs}}
\newcommand{\lto}{\longmapsto}
\newcommand{\lra}{\longrightarrow}
\newcommand{\sym}[1]{\colon\!\!#1\!\colon\!\!}
\newcommand{\ds}{\hbox{-}}
\newcommand{\boks}[3]{\overset{#2}{\underset{#1}{\boxplus}}\,#3}
\newcommand{\Abs}{\mathrm{Abs}}
\newcommand{\pMod}[1]{{#1}\ds\Mod}
\newcommand{\lmod}[1]{{#1}\ds\Mod}
\newcommand{\rmod}[1]{\Mod\ds{#1}}
\newcommand{\ie}{that is,~}
\newcommand{\LL}{\left[}
\newcommand{\RR}{\right]}
\newcommand{\ev}{\mathsf{ev}}
\newcommand{\db}{\mathsf{db}}
\newcommand{\id}{\mathsf{id}}
\newcommand{\op}[1]{{#1}^\circ}
\newcommand{\bs}[1]{\boldsymbol{#1}}
\newcommand{\Ker}{\underline{\mathrm{Ker}}}
\newcommand{\uT}{T_u} 
\newcommand{\uS}{S_u} 
\title{On functors between categories of modules over trusses }
\author{Tomasz Brzezi\'nski}
\address{
Department of Mathematics, Swansea University, 
Swansea University Bay Campus,
Fabian Way,
Swansea,
 Swansea SA1 8EN, U.K.\ \newline
Faculty of Mathematics, University of Bia{\l}ystok, K.\ Cio{\l}kowskiego  1M,
15-245 Bia\-{\l}ys\-tok, Poland}
\email{T.Brzezinski@swansea.ac.uk}
\author{Bernard  Rybo{\l}owicz}
\address{
Department of Mathematics, 
Heriot-Watt University, 
Edinburgh EH14 4AS, U.K.}
\urladdr{\url{https://sites.google.com/view/bernardrybolowicz/}}
\email{B.Rybolowicz@hw.ac.uk}
\author{Paolo Saracco}
\address{
D\'epartement de Math\'ematique, Universit\'e Libre de Bruxelles, 
Bd du Triomphe, B-1050 Brussels, Belgium.}
\urladdr{sites.google.com/view/paolo-saracco}
\urladdr{paolo.saracco.web.ulb.be}
\email{paolo.saracco@ulb.be}
\thanks{This version of the article has been accepted for publication, after peer review. The final publication is available at Elsevier via \href{https://doi.org/10.1016/j.jpaa.2022.107091}{doi.org/10.1016/j.jpaa.2022.107091}.}
\subjclass[2010]{16Y99; 08A99}
\keywords{Truss; heap; module; tensor product}
\begin{document}

\begin{abstract}
Categorical aspects of the theory of modules over trusses are studied. Tensor product of modules over trusses is defined and its existence established. In particular, it is shown that  bimodules over  trusses form a monoidal category. Truss versions of the Eilenberg-Watts theorem and Morita equivalence are formulated.  Projective and small-projective modules over trusses are defined and their properties studied.
\end{abstract}
\date\today
\maketitle

\fancyhf{}
\renewcommand{\headrulewidth}{0pt}
\thispagestyle{fancy}
\cfoot{\smallskip\footnotesize $\begin{gathered}\includegraphics[scale=0.5]{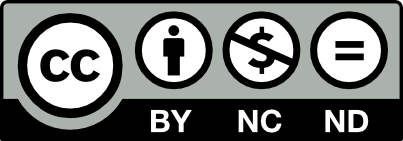}\end{gathered}$ \ \copyright\, 2022. This manuscript version is made available under the \href{https://creativecommons.org/licenses/by-nc-nd/4.0/}{CC-BY-NC-ND 4.0} license.}

\tableofcontents

\section{Introduction}\label{sec.intro}

Introduced in the 1920s by Pr\"ufer \cite{Pru:the} and Baer \cite{Bae:ein}, heaps can be understood as affinizations of groups, that is, as algebraic structures composed by a set $H$ and a ternary operation $[-,-,-]$ from which, upon specifying an element, a group structure on $H$ can be obtained with this element as identity.  In a similar vein, trusses can be understood as affine versions of rings in which the associative multiplication distributes over a ternary abelian heap operation rather than over a binary abelian group one. Most interesting, and motivationally important for their study, is the observation that trusses can also be understood as affine versions of (two-sided) braces, originally introduced by Rump \cite{Rum:bra} in the context of the set-theoretic Yang-Baxter equation.

In their primitive form, (skew) trusses appeared in \cite{Brz:tru} in order to grasp the nature of the law binding two operations together into a skew brace. However, it has been realized soon that trusses and their more general analogues (such as pre-trusses, near-trusses and skew trusses) were carrying an intrinsic interest on their own and not only for the interactions with ring theory and braces. For a glimpse of the increasing attention they are attracting, we refer the reader to the recent publications \cite{BrzMerRyb:pre, BrzRyb:cong, ColAnt}. In particular, it deserves to be mentioned how the greater flexibility offered by trusses with respect to rings allowed for the extension of the celebrated Baer-Kaplansky Theorem \cite[Theorem 16.2.5]{Fu15} to all abelian groups, provided that the endomorphism rings are replaced by the endomorphism trusses (see \cite{BrBr}).

A systematic study of trusses and their modules, mainly from a ring-theoretical point of view, has been initiated in \cite{Brz:par} and carried on in \cite{BrzMerRyb:pre, BrzRyb:cong, BrzRyb:mod}, and it appears to be leading to a successful framework in which both ring and brace theoretical modules can be treated on equal footing as different specifications of the same algebraic system.  We believe that this unification, which allows one to transfer concepts from one to the other theory, largely rewards the additional effort required by handling the slightly less familiar notion of a truss. 

Nevertheless, and despite a seeming similarity between trusses and rings, there are substantial differences in the behaviour of modules over trusses compared with modules over rings, which break the apparent closeness of the two theories. The origin of these differences can be traced back to the facts that \begin{enumerate*} \item neither the algebraic structure of a truss nor -- in a more pronounced way (and applicable even to the case of unital modules over trusses with identity) -- of a module over a truss includes a nullary operation, and
 \item the transition from a truss to a ring structure by fixing a neutral element is obviously not functorial (as it depends on an arbitrary choice of an element)\end{enumerate*}. Since no choice of a distinguished element needs  -- or is advised -- to be made (and so preserved by homomorphisms), one reasonably expects the category of modules over a truss being richer than that of modules over a ring. For instance, empty objects can be considered too and hom-sets would not be abelian groups in general, leading immediately to the observation that the category of modules over a truss, although sharing many properties with a category of modules over a ring, cannot be abelian and cannot admit any natural forgetful functor to an abelian category. The divergence between modules over rings and modules over trusses becomes even more evident in the separation of finite products from finite coproducts (initially observed in \cite{BrzRyb:mod}), which has profound consequences for the characterisations of projective objects. 

Supported by the aforementioned considerations, the aim of this work is to continue a systematic study of modules over trusses, now with special attention devoted to their categorical properties. 
Our approach is based on analysing, comparing and contrasting different categorical properties, which are well-known for modules over rings, but completely new for modules over trusses. On the one hand, this sets the stage for a unified approach to ring and brace theoretical modules, as suggested (and hoped for) in the initial paragraphs.
On the other hand, bringing to light and inspecting the monoidal structure of the categories of abelian heaps and of modules over trusses opens the way for a better understanding of categories enriched over abelian heaps rather than over abelian groups (or vector spaces), for instance, categories enriched over affine spaces.

We start this paper with preliminary Section~\ref{sec.pre} in which we review basic properties of heaps, trusses and modules over trusses, including the construction of direct sums and an initial study of relations between modules over a ring and modules over the associated truss firstly presented in \cite{BrzRyb:mod}. The results in Section~\ref{ssec.unital}, where we study some categorical consequences of the unital extensions for trusses, and Section~\ref{sec.cat}, where we connect epimorphisms of abelian heaps (or modules over a truss) with coequalizers (which we describe explicitly) and monomorphisms with equalizers, can be considered as possessing a moderate level of novelty.  Next we focus on the following topics. We define the tensor product of modules over trusses by its universal property and we present an explicit construction that affirms the existence of the tensor product for all modules in Theorem~\ref{thm.tensor}, the first main result of the paper. Then we show in Proposition~\ref{prop.tens.act} that, just as in the case of modules over rings, tensoring with a bimodule over trusses defines a functor between the corresponding categories of modules. This functor enjoys properties familiar from ring theory: for example, it is the left adjoint functor to the hom-functor. Thus, the tensor product is an associative operation that defines a monoidal structure on the category of bimodules over any truss, in particular on the category of heaps.  Consequently, the axioms of trusses and their modules can be formulated internally within the latter: a truss is a semigroup in the category of heaps, a unital truss is a monoid in this category, modules over trusses are modules over these internal structures. Finally, again in perfect accord with ring theory, the tensor product over trusses is a coequalizer of morphisms induced by actions.

Once the properties of the tensor product are established, we turn our attention to functors between categories of modules over trusses. In particular, the second main result of this paper, Theorem~\ref{thm:EW}, is an analogue for modules over trusses of the celebrated Eilenberg-Watts Theorem: it states that cocontinuous heap functors between categories of (unital) modules are necessarily given by tensoring with a (unital) bimodule. In the third main result, Theorem~\ref{thm:Morita}, we derive a Morita-type characterisation of equivalences between categories of  modules over trusses. Similarly to the case of rings, this characterisation is provided by strict Morita contexts consisting of two bimodules and dual basis and evaluation isomorphisms.

As desired (and somehow expected), the so far presented properties of modules over trusses quite faithfully mirror those of modules over rings. The paths start to diverge once a closer look is taken at those bimodules satisfying the hypotheses of Theorem \ref{thm:Morita} and, more generally, at preservation properties of the hom-functors from the category of modules over a truss $T$ to the category of abelian heaps, that is, at the projectivity of $T$-modules. Being a right adjoint functor, $\lhom T P -$ is continuous, that is, it preserves limits and hence, in particular, monomorphisms. However, as in the classical framework, it is not necessarily cocontinuous nor even right exact, whence it does not preserve colimits or epimorphisms in general. Keeping Theorem \ref{thm:Morita} in mind, we say that a finitely generated  $T$-module $P$ for which $\lhom T P -$ is right exact is a {\em tiny module} or a {\em small-projective module}. Such a module is equivalently characterised by the existence of a finite dual basis or by the identification of $\lhom T P -$ with the tensor functor ${^*P}\ot_T-$, where ${^*P}$ is the dual module; see Theorem~\ref{thm:toostrong}.  These modules reflect (small) projectivity over rings in the following sense. A finitely generated projective module over a ring is tiny over the associated truss; the absorber functor, that sends modules over the truss associated to a ring to modules over this ring, applied to a tiny module  yields  a finitely generated projective module; see Proposition~\ref{prop:Absproj}. Alas, despite the new insights into ring and module theory offered by the novel paradigm, the extent to which genuine tiny modules over trusses exist (as opposed to modules over rings seen as modules over trusses) is not clear at present.
This leads one to consider $T$-modules $P$  satisfying the (to be shown, weaker) condition that $\lhom T P -$ preserves epimorphisms and which we call {\em projective}, in accordance with the classical theory.  

Since the category of modules over a ring is an abelian category, the preservation of epimorphisms is sufficient for the exactness of the hom-functor. In contrast, as we already mentioned, the category of modules over a truss is enriched over the category of abelian heaps and not over the category of abelian groups, hence it is not (and should not be expected to be) an abelian or even pre-additive category in the usual sense. Therefore, one reasonably expects that the properties of being tiny or finitely generated and projective for modules over trusses do not coincide any longer (see Example \ref{ex:tinyfree}). 
In spite of the fact that the  category of modules over a truss is not abelian, some ways of defining an exact sequence are  still possible and we explore one such possibility in Section~\ref{sub.ses}. In contrast with the category of abelian groups or modules over a ring, the splitting of an exact sequence gains here two different meanings. The splitting of the monomorphism (that is, the existence of a retraction) allows one to identify the  middle module as the product of the outer modules; see Proposition~\ref{prop:prod}. On the other hand, the splitting of the epimorphism (that is, the existence of a section) identifies the  middle module as the product of the third module with the first module but with an {\em induced} structure; see Proposition~\ref{prop:starsum}. In fact, the new module structure realizes the first heap as the quotient module of the second by the third. This is the first place where the fact that finite products of abelian heaps (or modules over trusses) differ from finite coproducts truly shows up. This further prevents one from deducing that the preservation of epimorphisms yields exactness of the hom-functor, as mentioned above, and also affects the possibility of characterising projective modules in several ways. Although we develop  dual bases  and show that a module with a dual basis is projective, it is not clear whether the projectivity guarantees the existence of a dual basis.  In fact, we conclude the paper by showing that a non-empty module over a  truss is projective if and only if it is a direct \emph{factor} (not a direct \emph{summand}!) of a free module, with the other factor possessing an absorber; see Theorem~\ref{thm.proj}.

\section{Heaps, trusses, modules}\label{sec.pre}

\subsection{Heaps} \label{sec.heap}
A {\em heap} (\cite{Bae:ein}, \cite{Pru:the} or \cite{Sus:the})  is a  set $H$ together with a ternary operation $
[---]: H\times H\times H\lra H$ which is associative and satisfies the Mal'cev identities, \ie
\begin{equation}\label{heap.def}
[[a,b,c],d,e] = [a,b,[c,d,e]] \qquad \text{and} \qquad [a,b,b] = a = [b,b,a]
\end{equation}
 for all $a,b,c,d,e\in H$.  
 A morphism of heaps  is a function that preserves ternary operations. 
  A singleton set with the (unique) ternary operation is the terminal object in 
 the category of heaps; we denote it by $\star$. The empty set with the empty ternary operation is the initial object in this category;  we denote it by $\varnothing$.

A heap $H$ is said to be {\em abelian} if for all $a,b,c\in H$,
\begin{equation}\label{ab.heap}
[a,b,c] = [c,b,a].
\end{equation}
The full subcategory of  the category of heaps consisting of abelian heaps is denoted by $\ahrd$. Homomorphism sets of abelian heaps are themselves abelian heaps with the point-wise operation. Clearly, both $\star$ and $\varnothing$ are abelian heaps and they are the terminal and initial object, respectively, also in $\ahrd$.

There is a close relationship between heaps and groups. Given a (abelian) group $G$, there is an associated (abelian) heap $\hH(G)$ with operation, for all $g,h,k\in G$,
\begin{equation}\label{heap-form-gr}
[g,h,k] = gh^{-1}k.
\end{equation}
This  assignment of the heap $\hH(G)$ to the group $G$ is a functor from the category of (abelian) groups to that of (abelian) heaps. Conversely, a choice of any element $e$ in a non-empty heap $H$ defines the group $\gG(H;e)$, known as  the {\em $e$-retract of $H$}, with neutral element $e$ and composition law 
\begin{equation}\label{gr-from-heap}
a\cdot_e b = [a,e,b], \qquad \mbox{for all $a,b\in H$}. 
\end{equation}
The inverse of $a\in \gG(H;e)$ is $a^{-1}= [e,a,e]$. 
For all $e,f,\in H$, the functions
\begin{equation}\label{swap}
\tau_e^f : \gG(H;e)\lra \gG(H;f), \qquad a\lto [a,e,f], 
\end{equation}
are  group isomorphisms. The process of converting heaps into groups and groups into heaps is asymmetric on two levels. First, since the conversion of a heap into a group involves a choice of an element, this is not functorial, while the opposite operation is given by a functor. The second key difference is  best expressed by the following formulae
\begin{equation}\label{h-g-h-g}
\hH\Big(\gG\big(H;e\big),\cdot_e,e\Big) = H, \qquad \Big(\gG\big(\hH\left(G\right);f\big),\cdot_f,f\Big) \cong \Big(\gG\big(\hH\left(G\right);e\big),\cdot_e,e\Big)
\end{equation}
for all heaps $H$ and groups $G$. For the sake of clarity, notice that if $e \in G$ is the neutral element of $G$, then $\Big(\gG\big(\hH\left(G\right);e\big),\cdot_e,e\Big) = G$ and the isomorphism $\gG\big(\hH\left(G\right);f\big) \cong G$ is explicitly given by
\[
\begin{gathered}
\xymatrix @R=0pt {
G \ar[r] & \gG\big(\hH\left(G\right);f\big) &  
g \ar@{|->}[r] & gf\ .
}
\end{gathered}
\]
Equations \eqref{heap.def} imply that, for all $a,b,c,d,e\in H$,
\begin{equation}\label{def.heap.con}
[[a,b,c],d,e]=[a,[d,c,b],e] = [a,b,[c,d,e]].
\end{equation}
Consequently, in the case of an abelian heap, the reduction obtained by any placement of brackets in a sequence of elements of $H$ of odd length yields the same result. In this case we write
\begin{equation}\label{multi}
[a_1,\ldots, a_{2n+1}]_n \quad \mbox{or simply}\quad [a_1,\ldots, a_{2n+1}], \qquad a_1,\ldots , a_{2n+1}\in H,
\end{equation}
for the result of applying the (abelian) heap operation $n$-times in any possible way. Furthermore, in an abelian heap one can resort to the following {\em transposition rule} (see \cite[Lemma 2.3]{Brz:par}),
\begin{equation}\label{tran}
[[a_1,a_2,a_3],[b_1,b_2,b_3],[c_1,c_2,c_3]] = [[a_1,b_1,c_1],[a_2,b_2,c_2],[a_3,b_3,c_3]].
\end{equation}

The following lemma formalizes the extension of the transposition rule to an arbitrary family of elements.

\begin{lemma} 
In an abelian heap $H$,
\begin{equation}\label{eq:superbracket}
\LL\LL a_{i,j} \RR_{i=1}^{2n+1} \RR_{j=1}^{2m+1} = \LL\LL a_{i,j} \RR_{j=1}^{2m+1} \RR_{i=1}^{2n+1}.
\end{equation}
for all $n,m\geq 0$ and for all $a_{i,j}\in H$, $i=1,\ldots,2n+1,j=1,\ldots,2m+1$.
\end{lemma}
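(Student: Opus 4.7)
The plan is to proceed by induction, using the transposition rule \eqref{tran} as the base case $n=m=1$. First I would fix a canonical bracketing of the iterated heap operation, say
\[
[a_1,\ldots,a_{2k+1}]_k = \bigl[[a_1,\ldots,a_{2k-1}]_{k-1}, a_{2k}, a_{2k+1}\bigr],
\]
knowing that by \eqref{def.heap.con} any bracketing in an abelian heap yields the same value, so the choice is immaterial.

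The first step would be to establish the ``thin'' case $n=1$, that is,
\[
\bigl[[a_{i,j}]_{i=1}^{3}\bigr]_{j=1}^{2m+1} = \bigl[[a_{i,j}]_{j=1}^{2m+1}\bigr]_{i=1}^{3},
\]
by induction on $m$. The base $m=1$ is precisely the transposition rule \eqref{tran}. For the inductive step, I would write the outer $(2m+1)$-fold bracket in terms of the $(2m-1)$-fold bracket and two extra columns via the canonical bracketing, apply the induction hypothesis to the $3\times(2m-1)$ subarray, and then use \eqref{tran} once more to absorb the last two columns into the three row-brackets, producing the row-first form.

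The second step is an induction on $n$ for arbitrary $m$. The cases $n=0$ and $n=1$ are settled. For the inductive step at $n\geq 2$, I would split each column $[a_{i,j}]_{i=1}^{2n+1}$ as $\bigl[[a_{i,j}]_{i=1}^{2n-1}, a_{2n,j}, a_{2n+1,j}\bigr]$ via the canonical bracketing, regarding the LHS as an outer triple bracket over three column-objects (the partial $(2n-1)$-reduction and the two leftover rows). Applying the already-proved thin case $n=1$ swaps this outer triple past the $(2m+1)$-fold bracket. The residual inner $(2n-1)\times(2m+1)$ reduction falls under the induction hypothesis, so I may replace it by its row-first version and reassemble using the canonical bracketing to obtain the RHS.

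The main obstacle I expect is purely combinatorial bookkeeping: making sure the repeated unpackings of $[\cdots]_k$ are consistent and that each invocation of \eqref{tran} is applied to a bona fide $3\times 3$ sub-pattern. No deeper structural difficulty is anticipated, since the transposition rule \eqref{tran} together with the Mal'cev/associativity identities \eqref{heap.def} and \eqref{def.heap.con} supply all the algebraic content; the lemma is essentially a reindexing consequence of \eqref{tran} iterated along rows and columns.
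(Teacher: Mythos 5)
Your proposal is correct and follows essentially the same strategy as the paper: the paper first proves the thin case $\LL a_i,b_i,c_i\RR_{i=1}^{2n+1} = \bigl[\LL a_i\RR_{i=1}^{2n+1}, \LL b_i\RR_{i=1}^{2n+1}, \LL c_i\RR_{i=1}^{2n+1}\bigr]$ by induction from the transposition rule \eqref{tran}, and then inducts on the remaining index, exactly as you do (with the roles of rows and columns interchanged, which is immaterial by the symmetry of the statement). Both arguments are sound; yours merely spells out the bracketing bookkeeping that the paper leaves implicit.
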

\begin{proof}
One can show by induction that 
\begin{equation}\label{eq:qsuperbracket}
\LL a_i,b_i,c_i\RR_{i=1}^{2n+1} = \left[\LL a_i\RR_{i=1}^{2n+1}, \LL b_i\RR_{i=1}^{2n+1}, \LL c_i\RR_{i=1}^{2n+1}\right],
\end{equation}
for all $n\geq 0$, and then also that 

\[
\LL\LL a_{i,j} \RR_{i=1}^{2n+1} \RR_{j=1}^{2k+1} = \LL\LL a_{i,j} \RR_{j=1}^{2k+1} \RR_{i=1}^{2n+1},
\]
 for all $k\geq 0$, in a similar way.
\end{proof}

Directly from equations \eqref{heap.def} one can also observe that adding or removing an element in two consecutive places, whether separated by a bracket or not, does not change the value of the (multiple) heap operation. Another important consequence of the definition of a heap is that if for $a,b\in H$ there exists $c\in H$ such that 
\begin{equation}\label{equal}
[a,b,c] = c \quad \mbox{or} \quad [c,a,b]=c,
\end{equation}
then $a=b$. In fact, in view of the Mal'cev identities, \eqref{equal} is an equivalent characterisation of equality of elements in a heap.

\subsection{Sub-heaps and the sub-heap equivalence relation.}
A subset $S$ of a heap $H$ that is closed under the heap operation is called a {\em sub-heap} of $H$. 
Every non-empty sub-heap $S$ of an abelian heap $H$ defines a congruence relation $\sim_S$ on $H$:
\begin{equation}\label{rel.sub}
a\sim_S b \quad \iff \quad \exists s\in S,\ [a,b,s]\in S \quad \iff \quad \forall s\in S,\ [a,b,s]\in S.
\end{equation}
 The equivalence classes of $\sim_S$ form an abelian heap with operation induced from that in $H$. Namely,
$
[\bar a, \bar b, \bar c] = \overline{[a,b,c]}
$, where $\bar x$ denotes the class of $x$ in $H/\sim_S$ for all $x\in H$. This is known as the {\em quotient heap} and it is denoted by $H/S$. 
For any $s\in S$ the class of $s$ is equal to $S$.

If $\varphi: H\lra K$ is a morphism of abelian heaps, then for all $e\in \im (\varphi)$ the set
\begin{equation}\label{e-ker}
\ker_e (\varphi) \coloneqq  \{ a\in K\; |\; \varphi(a)=e\}
\end{equation}
is a sub-heap of $K$. Different choices of $e$ yield isomorphic sub-heaps and the quotient heap $H/\ker_e (\varphi)$ does not depend on the choice of $e$. Moreover, the sub-heap relation $\sim_{\ker_e (\varphi)}$ is the same as the {\em kernel relation} defined by: $a\,\Ker(\varphi)\,b$ if and only if $\varphi(a) = \varphi(b)$ (see \cite[Lemma 2.12(3)]{Brz:par}). Thus we write $\ker(\varphi)$ for $\ker_e (\varphi)$ and we refer to it as the {\em kernel} of $\varphi$.

The following fact, concerning kernels and heap homomorphisms, has been used more or less implicitly a number of times in the development of trusses and their modules. Therefore, we believe it would be useful to state and prove it explicitly at least once.

\begin{lemma}\label{lem:quomap}
Let $\varphi: A \lra B$ be a morphism of abelian heaps and $S\subseteq A$ be a sub-heap.  Denote  by $\pi: A\lra A/S$, $a \lto \bar{a},$ the canonical projection. 
 If the sub-heap relation $\sim_S$ is a sub-relation of the kernel relation $\Ker(\varphi)$, 
then there exists a unique morphism of abelian heaps $\tilde{\varphi} : A/S \lra B$ rendering the following diagram
\[
\xymatrix @=20pt{
A \ar[dr]_-{\varphi} \ar[rr]^-{\pi} & & \displaystyle A/S \ar@{.>}[dl]^-{\tilde{\varphi}} \\ 
& B & 
}
\]
commutative. In particular, if $S\subseteq \ker_e(\varphi)$ for a certain $e \in B$, then the conclusion follows.
\end{lemma}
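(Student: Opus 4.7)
The plan is to define the candidate map $\tilde{\varphi} : A/S \lra B$ by the assignment $\bar{a} \lto \varphi(a)$ and then verify, in order, well-definedness, the heap homomorphism property, commutativity of the triangle, and uniqueness. Well-definedness is the only place where the standing hypothesis is really used: whenever $\bar{a} = \bar{b}$ one has $a \sim_S b$ by construction of the quotient, and the assumption that $\sim_S$ is a sub-relation of $\Ker(\varphi)$ translates directly into $\varphi(a) = \varphi(b)$. The ternary operation on $A/S$ being induced from that on $A$, one reads $\tilde{\varphi}\big([\bar{a},\bar{b},\bar{c}]\big) = \varphi([a,b,c]) = [\varphi(a),\varphi(b),\varphi(c)]$ off the definition, so $\tilde{\varphi}$ is automatically a morphism of heaps. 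Commutativity of the triangle is built into the definition, and uniqueness is forced by the surjectivity of $\pi$.

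Thus the first statement of the lemma amounts to a routine verification of the universal property of a quotient by a congruence. The modest amount of honest work goes into the final clause: I would show that the condition $S \subseteq \ker_e(\varphi)$ for some $e \in B$ already forces $\sim_S$ to be contained in $\Ker(\varphi)$, so that the first part applies. Given $a \sim_S b$, I would pick any $s \in S$ with $[a,b,s] \in S$ (which exists by \eqref{rel.sub}); since $\varphi$ sends every element of $S$ to $e$, applying $\varphi$ to both $s$ and $[a,b,s]$ and using that $\varphi$ is a heap morphism yields
\[
[\varphi(a),\varphi(b),e] = \varphi([a,b,s]) = e.
\]
The cancellation characterisation of equality \eqref{equal} then forces $\varphi(a) = \varphi(b)$, which is what we need.

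The main (mild) obstacle is to resist the temptation of retracting $B$ to a group at the chosen element $e$ and instead to draw the conclusion intrinsically through \eqref{equal}; this keeps the argument functorial in $\varphi$ and free of arbitrary choices of neutral element, which is consistent with the spirit of the paper. Everything else follows the standard pattern of quotient constructions, so no genuine difficulty is expected.
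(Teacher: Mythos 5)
Your proposal is correct and follows essentially the same route as the paper: define $\tilde{\varphi}(\bar{a})=\varphi(a)$, check well-definedness from the hypothesis, get uniqueness from surjectivity of $\pi$, and for the final clause apply $\varphi$ to $[a,b,s]\in S$ to obtain $[\varphi(a),\varphi(b),e]=e$ and conclude via the cancellation characterisation \eqref{equal}. The only point the paper addresses that you omit is the trivial case of $A$ empty, which is not a substantive gap.
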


\begin{proof}
If $A$ is the empty heap, then there is nothing to prove. Thus, assume that $A$ is not the empty heap, which implies that $B$ is non-empty as well. 

Uniqueness of $\tilde{\varphi}$ follows from the surjectivity of $\pi$. Therefore, let us check that
\[
\tilde{\varphi}: A/S \lra B, \qquad \bar{a} \lto \varphi(a),
\]
is a well-defined heap homomorphism. If $\bar{a} = \bar{b}$, then $a \sim_S b$ and so $a\,\Ker(\varphi)\, b$ as well, hence $\varphi(a) = \varphi(b)$. Thus, $\tilde{\varphi}$ is independent of the choice of the representative. 

Furthermore, if there exists $e \in B$ such that $S\subseteq \ker_e(\varphi)$, then $\sim_S$ is a sub-relation of $\Ker(\varphi)$, since $\Ker(\varphi) = ~ \sim_{\ker_e(\varphi)}$. Explicitly, since $[a,b,s] = s' \in S$,
\[
e = \varphi(s') = \varphi([a,b,s]) = [\varphi(a),\varphi(b),\varphi(s)] = [\varphi(a),\varphi(b),e],
\]
which entails that $\varphi(a) = \varphi(b)$.
\end{proof}

In the case of $B=\im(\varphi)$ and $S=\Ker(\varphi)$, the induced map $\tilde{\varphi}$ is an isomorphism that establishes the standard first isomorphism theorem for heaps: $\im(\varphi)\cong A/\Ker(\varphi)$.

For any non-empty subset $X$ of a heap $H$, the sub-heap generated by $X$, denoted by $\langle X\rangle$, is equal to the intersection of all sub-heaps containing $X$. If $X$ is a singleton set, then $ \langle X\rangle = X$. If $H$ is an abelian heap then $\langle X\rangle$ can be described explicitly as
\begin{equation}\label{gen.sub}
\langle X\rangle = \{[x_1,\ldots, x_{2n+1}]\; |\; n\in \NN, x_i \in X\}.
\end{equation}

\subsection{Trusses and their modules} \label{sec.truss}
Recall from \cite{Brz:tru} or \cite{Brz:par} that a {\em truss} is an abelian heap $T$ together with an associative binary operation (denoted by juxtaposition and called {\em multiplication}) that distributes over the heap operation, \ie for all $s,t,t',t''\in T$,
\begin{equation}\label{truss.dist}
s[t,t',t''] = [st, st', st''] \quad \mbox{and} \quad [t,t',t'']s = [ts, t's, t''s].
\end{equation}
A truss is said to be {\em unital} (or \emph{to have identity}) if there is a (necessarily unique) neutral element for its multiplication. The identity is denoted by 1. If $T$ is a truss then $T$ with opposite multiplication is a truss too, called the {\em opposite} truss and denoted by $T^\circ$. A fundamental example of a (unital) truss is the {\em endomorphism truss} of an abelian heap, $E(H) = \ahrd(H,H)$, which has the pointwise defined heap operation 
and multiplication given by the composition of morphisms. Equivalently, $E(H)$ can be seen as a semi-direct product of any (isomorphic) $e$-retract  $\gG(H;e)$ of $H$ (see \eqref{gr-from-heap}) with the endomorphism monoid of  $\gG(H;e)$ (see \cite[Proposition 3.44]{Brz:par}).

A heap homomorphism between two trusses is a truss homomorphism if it respects multiplications. The category of trusses and their morphisms is denoted by $\trs$. In case of unital trusses we require in addition that morphisms preserve identities; the corresponding category is denoted by $\trs_1$. In an obvious way, the terminal object $\star$ (\ie\ the singleton set with the unique ternary operation) of the category $\ahrd$ is also the terminal object of the category $\trs$ of trusses and the zero object (both initial and terminal) of the category $\trs_1$ of unital ones.

\begin{remark}
Even if a unital truss may be informally described as a unital ring whose underlying group structure has no specified neutral element, the last observation of the previous paragraph should already convince the reader that truss theory is intrinsically different from ring theory. The category $\ring$ of unital rings admits an initial object, the ring of integers $\ZZ$, and a terminal one, the zero ring $\bs{0}$. It does not admit the zero object though (it is self-evident that $\ZZ\neq \bs{0}$).
\end{remark}

Let $T$ be a truss. A {\em left $T$-module} is an abelian heap $M$ together with an associative left action $\lambda_M: T\times M \lra M$ of $T$ on $M$ that distributes over the heap operation. The action is denoted on elements by $t\cdot m = \lambda_M(t,m)$, with $t\in T$ and $m\in M$. Explicitly, the axioms of an action state that, for all $t,t',t''\in T$ and $m,m',m''\in M$,
\begin{subequations}\label{module}
\begin{equation}\label{module1}
t\cdot(t'\cdot m) = (tt')\cdot m, 
\end{equation}
\begin{equation}\label{module2}
 [t,t',t'']\cdot m = [t\cdot m,t'\cdot m,t''\cdot m] , 
\end{equation}
\begin{equation}\label{module3}
 t\cdot [m,m',m''] = [t\cdot m,t\cdot m',t\cdot m''].
\end{equation}
\end{subequations}
If $T$ is a unital  truss and the action satisfies $1\cdot m = m$, then we say that $M$ is a {\em unital} or {\em normalised} module. Equivalently, a (unital) $T$-module can be described as an abelian heap $M$ together with a homomorphism of (unital) trusses $T \lra E(M)$.

A module homomorphism is a homomorphism of heaps between two modules that also respects the actions. As it is customary in ring theory we often refer to homomorphisms of $T$-modules as to $T$-linear maps or morphisms.
The category of left $T$-modules is denoted by $T$-$\Mod,$ that of left unital $T$-modules  by $ T_{1}\ds\Mod$ and the heaps of homomorphisms between modules $M$ and $N$ are denoted by $\lhom T MN$.  A left module for the truss $T^\circ$  opposite to $T$ is  called a {\em right} $T$-module. The category of right $T$-modules is denoted by $\Mod$-$T,$ that of right unital $T$-modules  by $\Mod\ds T_{1}$ and abelian heaps of morphisms are denoted by $\rhom T MN$ (the side being clear from the context). The category of $T\ds T'$-bimodules will be denoted by $T\ds\Mod \ds T'$ and analogously the category of unital $T\ds T'$-bimodules by $T_{1}\ds\Mod \ds T'_{1}.$   The terminal heap $\star$ and the initial heap $\varnothing$, with the unique possible actions, are the terminal and the initial object, respectively, in $T$-$\Mod$ and $\Mod$-$T$. It is remarkable that, since $\star \neq \varnothing$, $\lmod{T}$ and $\rmod{T}$ do not have zero object.

The category of left (right or two-sided) $T$-modules is enriched over the category $\left(\ahrd,\times,\star\right)$ of abelian heaps. In particular, $\rhom T M M \subseteq \ahrd (M,M)$ is a unital sub-truss of the unital endomorphism truss $E(M)$. We denote it by $E_T(M)$. 
We say that a functor $F: T\ds\Mod \lra T'\ds\Mod$ is a {\em heap functor} if it induces a heap homomorphism between the hom-sets, that is, if for all $M,N\in T\ds\Mod$, the induced function
\begin{equation}\label{heap.fun}
F_{M,N} : \lhom T MN\lra \lhom{T'} {F(M)}{F(N)},\qquad \varphi \lto F(\varphi),
\end{equation}
is a homomorphism of abelian heaps. Since functors preserve compositions and identities, $F_{M,M}: E_T(M)\lra E_{T'}(F(M))$ is a morphism of unital trusses for all $M\in \lmod{T}$.

An element $e$ of a left $T$-module $M$ is called an {\em absorber} provided that
\begin{equation}\label{absorb}
t\cdot e = e, \qquad \mbox{for all $t\in T$}.
\end{equation}
If $e$ is an absorber, then the action (left) distributes over the abelian group operation on $M$ associated to $e$ as in \eqref{gr-from-heap}. The set of all absorbers in $M$  is denoted by $\Abs(M) = \left\{ m\in M \mid r\cdot m = m, \ \forall\,r\in R\right\}$. 

Given a left (respectively, right) $T$-module $M$ and an element $e\in M$, the {\em $e$-induced action} of $T$ on $M$ is defined for all $t\in T, m\in M$, by
\begin{equation}\label{induced}
t\cdot_e m \coloneqq  [t\cdot m, t\cdot e, e], \qquad\mbox{(respectively, $m\cdot_e t \coloneqq  [m\cdot t, e\cdot t, e]$)}.
\end{equation}
$M$ is a left (respectively, right) $T$-module with action \eqref{induced} in which $e$ is an absorber. We refer to $M$ with this action as to an {\em induced} module and denote it by $M^{(e)}$. Different choices of $e$ yield isomorphic induced modules and an iteration of an induced action gives an induced action (see \cite[Lemma 4.29]{Brz:par}).

A sub-heap $N$ of a left $T$-module $M$ is called a {\em submodule} if it is closed under the $T$-action. A sub-heap $N$ is called an {\em induced submodule} if there exists $e\in N$ such that $N$ is a submodule of the  induced module $M^{(e)}$. This implies (in fact is equivalent to) that $N$ is a submodule of any $M^{(n)}$, $n\in N$.  If $N$ is a sub-heap of a left $T$-module $M$, then $M/N$ is a $T$-module such that the canonical epimorphism $M\lra M/N$ is a $T$-module homomorphism if and only if $N$ is an induced submodule of $M$ (see \cite[Proposition 4.32(2)]{Brz:par}). A kernel of a module homomorphism as defined by \eqref{e-ker} is an induced submodule of the domain. Furthermore, any $T$-module homomorphism $\varphi: M\lra M'$ factorizes uniquely as a $T$-linear map through the canonical epimorphism $M\lra M/N$ for any induced submodule $N$ of $M$ contained in $\ker_e(\varphi)$ as in Lemma~\ref{lem:quomap}. In particular, for all $T$-module morphisms with domain $M$, this yields an analogue of the first isomorphism theorem for $T$-modules: $\im(\varphi) \cong M/\Ker(\varphi)$.

If $N$ is a submodule of $M$, then it is automatically an induced submodule (with respect to any of its elements). In that case $N$ is an absorber of the corresponding quotient $T$-module $M/N$ and all quotient modules with absorbers can be identified as those arising as a quotient by submodules. The image of a module homomorphism is a submodule of the codomain.

Any abelian heap $H$ is a module over the terminal truss $\star$ with the action given by any endomorphism of $H$. Obviously, it is a unital module over $\star$ in a unique way with the identity action. Thus abelian heaps can be identified with unital modules over $\star$.
A morphism of trusses $\varphi: T\lra S$ induces a change of scalars functor 
$S\ds\Mod \lra T\ds\Mod$: an $S$-module $M$ is a $T$-module with action $t\cdot m = \varphi(t)\cdot m$. In particular, any abelian heap $H$ is a module over any truss $T$ through the action of $\star$ on $H$ and the unique morphism $T\lra \star$. 

\subsection{Products and direct sums of modules.}\label{ssec.(co)prod}
Given left $T$-modules $M$ and $N$ their product $M\times N$ has the left $T$-module structure defined component-wise, that is,
$$
\left[(m,n),(m',n'),(m'',n'')\right] =  \left([m,m',m''],[n,n',n'']\right), \quad t\cdot (m,n) = (t\cdot m,t\cdot n),
$$
for all $t\in T$, $m,m',m''\in M$ and $n,n',n''\in N$.

The definition of the coproduct  or {\em direct sum} of  $T$-modules has been presented in \cite[\S3]{BrzRyb:mod} and it is based on the coproduct of abelian heaps discussed exhaustively therein. We summarise this definition presently.

Let $X$ be a set. The free abelian heap on $X$, denoted $\Aa(X)$, is constructed in two steps. First one constructs the free heap $\Hh(X)$ by considering all words of odd length from the alphabet $X$ in which no two consecutive letters are the same. These are referred to as {\em reduced words}. The heap operation on words $w_1$, $w_2$, $w_3$ is obtained by concatenation of $w_1w_2^\circ w_3$, where $w_2^\circ$ means $w_2$ written backwards, and removal of all pairs of identical neighbours. In this way a reduced word of odd length is obtained again and one can check that this procedure defines a heap operation and completes the construction of the free heap $\Hh(X)$.  The second step involves the symmetrisation of the words in $\Hh(X)$. A {\em symmetric word} of odd length in the alphabet $X$ is defined, for all $x_1,\ldots x_{n+1}, y_1,\ldots, y_n \in X$,  as the set
$$
w= \sym{x_1y_1x_2\ldots y_nx_{n+1}} =\{ x_{\sigma(1)}y_{\hat\sigma(1)}x_{\sigma(2)}\ldots y_{\hat\sigma(n)}x_{\sigma(n+1)}\; |\; \sigma\in S_{n+1}, \hat\sigma\in S_n\},
$$
where $S_k$ denotes the symmetric group. The word $w$ is said to be {\em reduced} if all the words included in $w$ are reduced. One can easily check that all symmetric words are equivalence classes of a congruence relation on $\Hh(X)$ and thus they form a heap $\Aa(X)$  which is abelian, by the symmetrisation. Due to the definition of the heap structure on $\Aa(X)$, we will often denote a symmetric word $w= \sym{x_1y_1x_2\ldots y_nx_{n+1}}$ by
\[
\left[x_1,y_1,x_2,\ldots, y_n,x_{n+1}\right].
\]

The \emph{direct sum} or \emph{coproduct} of abelian heaps $M$ and $N$, denoted by $M\boxplus N$, is the quotient of the free abelian heap $\Aa(M\sqcup N)$ on the disjoint union $M\sqcup N$ by the sub-heap generated by  
$$
[[m,m',m''],[m,m',m'']_{M},e],\quad [[n,n',n''],[n,n',n'']_{N},e],
$$
where $m,m',m''\in M,$ $ n,n',n''\in N$, $[---],[---]_{M},[---]_{N}$ are the ternary operations in $\Aa(M\sqcup N)$, $M$ and $N$, respectively, and $e$ is any fixed element of $\Aa(M\sqcup N)$. In other words, to obtain $M\boxplus N$ we consider all words in $\Aa(M\sqcup N)$ and apply ternary operations in $M$ (respectively $N$) wherever there are three consecutive elements in $M$ (respectively $N$) in any representative of symmetric words. It has been shown in \cite[Proposition 3.6]{BrzRyb:mod} that any representative can be reduced to one of the six forms $m$, $n$, $mnn'$, $mm'n$, $m_1n_1m_2\ldots n_km_{k+1}$, $n_1m_1n_2\ldots m_kn_{k+1}$, with $m\neq m', m_i\in M$ and $n\neq n', n_i\in N$. These representations can be simplified even further, once two elements $e_M\in M$ and $e_N\in N$ are fixed, giving $m$, $n$, $mne_N$, $nme_M$, $mne_Me_N\ldots e_Ne_M$, $nme_Ne_M\ldots e_Me_N$. Colloquially, one refers to the alternating sequences of $e_M$ and $e_N$ appearing is such representations as to {\em tails}. By using this latter representation, one can show an important isomorphism of heaps which relates direct sums of heaps with direct sums of their retracts:
\begin{equation}\label{iso.direct}
M\boxplus N \cong \hH\left(\gG\left(M;e_M\right)\oplus \gG\left(N;e_N\right)\oplus \ZZ\right),
\end{equation}
in which tails are sent to integers; see \cite[Proposition~3.9]{BrzRyb:mod}. An immediate consequence of this isomorphism is that the direct sum of two non-empty abelian heaps is always an infinite abelian heap.

If $M$ and $N$ are left $T$-modules then the direct sum of heaps $M\boxplus N$ is a left $T$-module with action defined letter-wise on the (representative) reduced words. Its structure maps are the two $T$-linear morphisms $\iota_M:M\lra M \boxplus N, m\lto m,$ and $\iota_N: N \lra M\boxplus N, n \lto n$. The module structure induced on the groups on the right-hand side of isomorphism \eqref{iso.direct} is rather intricate, unless both $e_M$ and $e_N$ are absorbers.  In the latter case, it assumes the simple form $t \cdot (m, n, z) = (t \cdot m, t \cdot n, z)$ for all $t\in T$, $m \in M$, $n \in N$ and $z \in \ZZ$. 

For a (unital) truss $T$ and for every element $x$ of a set $X$, one can construct a (unital) left $T$-module $Tx$ generated by $x$. Namely,
$$
Tx\coloneqq \{tx\;|\;t\in T\}, \qquad [tx,t'x,t''x] \coloneqq  [t,t',t'']x, \quad t\cdot (t'x) = (tt')x,
$$ 
The map $T\lra Tx$, $t\lto tx,$ is an obvious isomorphism of modules. As explained in \cite[Section~4]{BrzRyb:mod}, for $T$ a unital truss the direct sum module 
$$
\Tt^{X}\coloneqq \underset{x\in X}{\boxplus}{Tx}.
$$
together with the function $\iota_X:X\lra \Tt^{X}$, $x\lto 1x,$ is a free object in the category of unital left $T$-modules.  Consequently, a unital left $T$-module $M$ is said to be a {\em free unital module generated by a set $X$} if it is isomorphic to $\Tt^{X}$.

\subsection{From trusses to unital trusses}\label{ssec.unital}

Recall from \cite[Proposition 3.12]{BrzRyb:mod} that the direct sum of heaps allows us to perform an analogue for trusses of the Dorroh's extension of a ring. Namely, for a truss $T$ the direct sum of abelian heaps $\uT \coloneqq T \boxplus \star$, with multiplication uniquely determined by relations
\begin{equation}\label{eq:unital}
{*}\cdot {*} = {*}, \qquad {*} \cdot t = t = t \cdot {*} \qquad \text{ and } \qquad t\cdot t' = tt'
\end{equation}
for all $t,t'\in T$, is a unital truss, called the \emph{unital extension} of $T$. It satisfies the following universal property.

\begin{proposition}\label{prop:unital}
Let $T$ be a truss. The heap homomorphism $\iota_T : T \lra \uT, t \lto t, $ is a morphism of trusses. Furthermore, if $S$ is a unital truss, then for every morphism of trusses $f : T \lra S$ there exists a unique morphism of unital trusses $\tilde{f} : \uT  \lra S$, such that $\tilde{f}\circ \iota_T = f$.
\end{proposition}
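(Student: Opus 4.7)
The proof would proceed in three steps, corresponding to the three claims in the statement. First, to see that $\iota_T : T \lra \uT$ is a morphism of trusses, I would observe that it is a homomorphism of abelian heaps by the very construction of $\uT = T \boxplus \star$ as the coproduct in $\ahrd$, with $\iota_T$ being the canonical structure map into the first summand. Multiplicativity is then immediate from the defining relations \eqref{eq:unital}, which give $\iota_T(t)\cdot \iota_T(t') = t \cdot t' = tt' = \iota_T(tt')$ for all $t,t'\in T$.

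Second, to construct $\tilde{f}$, I would invoke the universal property of $T\boxplus \star$ as a coproduct in $\ahrd$. Heap morphisms $\star \lra S$ correspond bijectively to elements of $S$, hence the choice of $1_S\in S$ together with the underlying heap morphism of $f$ determines a unique heap morphism $\tilde{f}: \uT \lra S$ satisfying $\tilde{f}\circ \iota_T = f$ and $\tilde{f}(*)=1_S$. The second equation says precisely that $\tilde{f}$ preserves the identities, since $*$ is the multiplicative identity of $\uT$ by \eqref{eq:unital}.

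Third, I would check that $\tilde{f}$ is multiplicative. The key observation is that every element of $\uT$ is, by definition of the coproduct, an iterated heap combination of elements of $\iota_T(T)\cup\{*\}$. Repeated application of the two-sided distributivity of truss multiplication over the heap operation \eqref{truss.dist}, both in $\uT$ and in $S$, together with the fact that $\tilde{f}$ is a heap homomorphism, reduces the verification of $\tilde{f}(x\cdot y)=\tilde{f}(x)\tilde{f}(y)$ to the four atomic cases $t\cdot t'$, $t\cdot *$, $*\cdot t$, and $*\cdot *$. Each of these follows at once from \eqref{eq:unital}, the multiplicativity of $f$, and the fact that $1_S$ is the identity of $S$.

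Uniqueness of $\tilde{f}$ as a morphism of unital trusses is then automatic: any such extension must coincide with $f$ on $T$ and must send the identity $*$ of $\uT$ to the identity $1_S$ of $S$, so it is uniquely determined as a heap morphism by the coproduct universal property already invoked. The argument is essentially a straightforward verification; the only mildly delicate step is the reduction of multiplicativity to the four atomic cases, but this is handled cleanly by distributivity and does not constitute a substantive obstacle.
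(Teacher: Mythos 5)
Your proposal is correct and follows essentially the same route as the paper: construct $\tilde{f}$ from the coproduct universal property of $T\boxplus\star$ with $\star\to S$ picking out $1_S$, and verify multiplicativity by expanding products of symmetric words via distributivity and reducing to the four atomic cases $t\cdot t'$, $t\cdot *$, $*\cdot t$, $*\cdot *$. No substantive differences.
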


\begin{proof}
Since $\iota_T(t) = t$ for all $t\in T$, the right-hand side relation in \eqref{eq:unital} is exactly the multiplicativity of $\iota_T$. Assume that $S$ is a unital truss and that $f: T \lra S$ is a truss homomorphism. If we consider the heap map $\eta:\star \lra S, * \lto 1_S,$ then there exists a unique morphism of abelian heaps $\tilde{f}: T \boxplus \star \lra S$ such that $\tilde{f}\circ \iota_T = f$ and $\tilde{f} \circ \iota_\star = \eta$. We claim that $\tilde{f}$ is a morphism of unital trusses. Unitality follows by definition, since $\tilde{f}(*) = \eta(*) = 1_S$. To check multiplicativity pick two symmetric words $\sym{a_1a_2a_3\ldots a_{2k}a_{2k+1}}$ and $\sym{b_1b_2b_3\ldots b_{2h}b_{2h+1}}$ in $T \boxplus \star$, where the symbols $a_i,b_j$ belongs to $T \sqcup \star$ for all $i,j$. Since we have that
\[
\begin{gathered}
\tilde{f}\Big({\sym{a_1a_2a_3\ldots a_{2k}a_{2k+1}}} \cdot {\sym{b_1b_2b_3\ldots b_{2h}b_{2h+1}}}\Big) = \\
= \tilde{f}\Big(\sym{(a_1\cdot b_1)(a_1\cdot b_2)\ldots(a_1\cdot b_{2h+1})(a_2\cdot b_1)\ldots(a_i\cdot b_j)\ldots (a_{2k+1}\cdot b_{2h+1})}\Big) \\
= \Big[\tilde{f}(a_1\cdot b_1),\tilde{f}(a_1\cdot b_2),\cdots,\tilde{f}(a_1\cdot b_{2h+1}),\tilde{f}(a_2\cdot b_1),\cdots,\tilde{f}(a_i\cdot b_j),\cdots, \tilde{f}(a_{2k+1}\cdot b_{2h+1})\Big],
\end{gathered}
\]
it is enough to check that $\tilde{f}$ is multiplicative on a product $a \cdot b$ where $a,b \in T \sqcup \star$. Now,
\[
\tilde{f}(a\cdot b) = \begin{cases}
\tilde{f}({*} \cdot {*}) = \tilde{f}(*) = 1_S = 1_S \cdot 1_S = \tilde{f}(*) \cdot \tilde{f}(*) & a,b \in \star \\
\tilde{f}(t \cdot {*}) = \tilde{f}(t) = \tilde{f}(t)\cdot 1_S = \tilde{f}(t)\cdot \tilde{f}(*) & a\in T, b\in\star \\
\tilde{f}({*}\cdot t) = \tilde{f}(t) = 1_S \cdot \tilde{f}(t) = \tilde{f}(*) \cdot \tilde{f}(t) & a \in \star, b \in T \\
\tilde{f}(t \cdot t') = \tilde{f}(tt') = f(tt') = f(t)\cdot f(t') = \tilde{f}(t) \cdot \tilde{f}(t') & a,b \in T
\end{cases},
\]
that is, $\tilde{f}(a\cdot b) = \tilde{f}(a) \cdot \tilde{f}(b)$ for all $a,b \in T \sqcup \star$ and the proof is complete.
\end{proof}

\begin{theorem}\label{thm:unital}
Let $T$ be a truss. Any $T$-module $M$ is naturally a unital $\uT $-module. This induces a functor $\Uu : \lmod{T} \lra \lmod{(\uT)_1 }$ which is the inverse of the restriction of scalars functor $\iota_T^*:\lmod{(\uT)_1 } \lra \lmod{T}$ along the truss homomorphism $\iota_T : T \to \uT $. In particular, we have an isomorphism of categories $\lmod{(\uT)_1} \cong \lmod{T}$.
\end{theorem}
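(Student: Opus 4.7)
The plan is to exploit the equivalent description of a (unital) $T$-module as a (unital) truss homomorphism from $T$ to the endomorphism truss, and then invoke the universal property of the unital extension established in Proposition~\ref{prop:unital}.

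First, given a $T$-module $M$, its structure is encoded by a truss homomorphism $\lambda_M : T \lra E(M)$ into the (necessarily unital) endomorphism truss of $M$. Applying Proposition~\ref{prop:unital} to $\lambda_M$, I obtain a unique unital truss homomorphism $\widetilde{\lambda}_M : \uT \lra E(M)$ satisfying $\widetilde{\lambda}_M \circ \iota_T = \lambda_M$. This endows $M$ with a unital $\uT$-module structure, explicitly prescribed on the ``generators'' by $t \cdot m = \lambda_M(t)(m)$ for $t \in T$ and ${*} \cdot m = m$, and extended heap-homomorphically in the $\uT$-variable to reduced symmetric words in $T \sqcup \star$.

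Next, for the functoriality, given a $T$-linear map $f : M \lra N$, I would verify that $f$ is automatically $\uT$-linear. Since every element $x \in \uT$ is a symmetric word in $T \sqcup \star$ and the $\uT$-action is a heap homomorphism in its first variable (equivalently, $\widetilde{\lambda}_M$ and $\widetilde{\lambda}_N$ are heap homomorphisms), the identity $f(x \cdot m) = x \cdot f(m)$ reduces, via distributivity over the ternary heap operation, to the two cases $x = t \in T$ (which is precisely the $T$-linearity of $f$) and $x = *$ (where both sides equal $f(m)$ by unitality). This gives a well-defined functor $\Uu : \lmod{T} \lra \lmod{(\uT)_1}$ that leaves the underlying heap map of any morphism unchanged.

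Finally, to check that $\Uu$ and $\iota_T^*$ are mutually inverse, I proceed by verifying the two compositions. The composite $\iota_T^* \circ \Uu$ is the identity on $\lmod{T}$, because the $T$-action recovered by restriction along $\iota_T$ from the $\uT$-structure built by $\Uu$ is, by construction, the original one. For the converse $\Uu \circ \iota_T^* = \mathrm{id}$ on $\lmod{(\uT)_1}$, I would appeal to the uniqueness part of Proposition~\ref{prop:unital}: starting from a unital $\uT$-module with structure map $\mu : \uT \lra E(M)$, its restriction $\mu \circ \iota_T : T \lra E(M)$ admits $\mu$ itself as a unital extension, which must therefore coincide with $\widetilde{\mu \circ \iota_T}$. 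Both identities being compatible with morphisms, the isomorphism of categories $\lmod{(\uT)_1} \cong \lmod{T}$ follows. No step here is a genuine obstacle; the heart of the argument is Proposition~\ref{prop:unital}, and the only mild care required is the verification that $T$-linear maps extend to $\uT$-linear maps without any additional hypothesis.
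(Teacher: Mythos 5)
Your proposal is correct and follows essentially the same route as the paper: both proofs encode the module structure as a truss homomorphism $T \to E(M)$, extend it uniquely to $\uT$ via Proposition~\ref{prop:unital}, verify $\uT$-linearity of $T$-linear maps by reducing to the generators $T$ and $\star$ of the direct sum, and settle the two composites by construction and by the uniqueness of the unital extension, respectively.
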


\begin{proof}
Observe that to equip  an abelian heap $M$ with the structure of a $T$-module is the same as to define a truss homomorphism $\rho_M:T \lra E(M)$. Since $E(M)$ is unital with unit $\id_M$, $\rho$ extends uniquely to a unital truss homomorphism $\varrho_M:\uT  \lra E(M)$ by Proposition \ref{prop:unital}, making of $M$ a unital $\uT $-module. Let $f:M \to N$ be a morphism of $T$-modules. To check that it is $\uT$-linear as well, observe that $f$ is $\uT$-linear if and only if the following diagram commutes
\[
\xymatrix @R=20pt{
M \ar[r]^-{\check{\varrho}_M} \ar[d]_-{f} & {\ahrd}\left({\uT},{M}\right) \ar[d]^-{{\ahrd}\left({\uT},{f}\right)} \\
N \ar[r]_-{\check{\varrho}_N} & {\ahrd}\left({\uT},{N}\right),
}
\]
where $\check{\varrho}_M(m): z \lto \varrho_M(z)(m)$ and ${\ahrd}\left({\uT},{f}\right): g \lto f \circ g$. Therefore, we are led to check that, for all $m \in M$,
\begin{equation}\label{eq:unitTech1}
f \circ \check{\varrho}_M(m) = \check{\varrho}_N(f(m))
\end{equation}
as heap homomorphisms from $\uT$ to $N$. However, since
\[
\begin{aligned}
\left(f \circ \check{\varrho}_M(m) \circ \iota_\star\right)(*) & = f \left( \check{\varrho}_M(m)(\iota_\star(*)) \right) = f \left( \varrho_M(\iota_\star(*))(m) \right) = f(m) \\
& = \varrho_N(\iota_\star(*))(f(m)) = \left(\check{\varrho}_N(f(m))\circ \iota_\star\right)(*) \qquad \text{ and} \\
\left(f \circ \check{\varrho}_M(m) \circ \iota_T\right)(\iota_T(t)) & = f \left( \check{\varrho}_M(m)(\iota_T(t)) \right) = f \left( \varrho_M(\iota_T(t))(m) \right) = f \left( \rho_M(t)(m) \right) \\
& = \rho_N(t)\left(f(m)\right) = \varrho_N(\iota_T(t))(f(m)) = \left(\check{\varrho}_N(f(m))\circ \iota_T\right)(t),
\end{aligned}
\]
for all $t \in T$, it follows by the universal property of the direct sum that \eqref{eq:unitTech1} holds. Summing up, there is a fully faithful functor
\[
\Uu: \lmod{T} \lra \lmod{(\uT)_1}, \qquad  \begin{cases} M \lto M \\ f \lto f \end{cases}.
\]
Now, if $(M,\rho_M)$ is a $T$-module and we consider its unital extension $\left(\Uu(M),\varrho_M\right)$, then the restriction of scalars functor $\iota_T^*$ endows $\Uu(M)$ with the $T$-module structure given by the composition
\[
T \xrightarrow{\iota_T} \uT \xrightarrow{\varrho_M} E(M)
\]
which coincides with $\rho_M$ by definition of $\varrho_M$. The other way around, if $(N,\varrho_N)$ is a unital $\uT$-module and we construct the unital extension $\Uu(\iota_T^*(N))$ of the $T$-module $(\iota_T^*(N),\rho_N)$ obtained by restriction of scalars along $\iota_T$, then this is given by the unique unital extension of $\rho_N = \varrho_N\circ \iota_T$ and the latter has to coincide with $\varrho_N$ by uniqueness.
\end{proof}

 As a matter of notation, we will often omit to specify the functors $\iota_T^*$ and $\Uu$, unless their presence would increase the clarity of the exposition.

\subsection{Modules over a ring and modules over its truss}\label{ssec.ringtype}
Recall from \cite[\S2.2 and \S4]{BrzRyb:mod} that if $R$ is a (unital) ring then we can consider its associated (unital) truss $\tT(R) = (\hH(R,+),\cdot)$. Moreover, any (unital) $R$-module $M$ gives rise, in the same way, to a (unital) $\tT(R)$-module $\tT(M)=(\hH(M,+),\cdot)$, whose underlying abelian heap structure is induced by the abelian group one. This assignment gives rise to a functor
\[
\tT : \pMod{R} \lra \pMod{\tT(R)}, \quad (M,+,\cdot) \lto (\hH(M,+),\cdot), \quad f \lto  f,
\]
which admits a left adjoint
\[
(-)_{\Abs} : \pMod{\tT(R)} \lra \pMod{R}, \;\; (M,[\mbox{-},\mbox{-},\mbox{-}],\cdot) \lto \left(\gG\left(M/\Abs{(M)};\Abs{(M)}\right),\cdot\right) .
\]
In view of \cite[Lemma 4.6(5)]{BrzRyb:mod}, the counit $\eps_N: \tT(N)_{\Abs} \lra N, \bar{n} \lto n,$ of this adjunction is always a natural isomorphism and hence $\tT$ is fully faithful (see \cite[Theorem IV.3.1]{MacLane}). The unit $\eta_M: M \lra \tT(M_\Abs), m \lto \bar{m},$ is simply the canonical projection onto the quotient $M/\Abs(M)$, for all $M$ in $\pMod{\tT(R)}$.

\subsection{\for{toc}{Epis, monos and coequalizers of modules}\except{toc}{Epimorphisms, monomorphisms and coequalizers of \texorpdfstring{$T$}{T}-modules}}\label{sec.cat} Let $T$ be a truss. It will be useful in the forthcoming sections to know that epimorphisms (respectively, monomorphisms) of $T$-modules are always  \emph{effective}, \ie that they are coequalizers (respectively, equalizers) of  their kernel pairs (respectively, cokernel pairs), and that they coincide with surjective (respectively, injective) $T$-linear maps.

 To this aim, recall that if $f:M \lra N$ is a morphism of $T$-modules, its kernel pair (respectively, cokernel pair) is the pullback (respectively, pushout) of the pair $(f,f)$.

\begin{proposition}\label{prop:episurj}
Every epimorphism of $T$-modules is surjective.
\end{proposition}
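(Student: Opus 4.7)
The plan is to argue by contradiction, constructing two distinct $T$-linear maps out of $N$ that become equal after precomposition with $f$. I will split the argument into two cases depending on whether $M$ is empty or not, since the construction of the quotient $N/\im(f)$ requires the image to be non-empty.

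\textbf{Case 1: $M$ non-empty.} Then $\im(f)$ is a non-empty submodule of $N$, hence an induced submodule, and in the quotient $N/\im(f)$ the class $\overline{\im(f)}$ is an absorber. I would test the epi property against $P = N/\im(f)$ using the two parallel morphisms
\[
g_1 = \pi : N \lra N/\im(f), \quad n \lto \bar n, \qquad g_2 : N \lra N/\im(f), \quad n \lto \overline{\im(f)}.
\]
Here $g_1$ is $T$-linear as $\im(f)$ is an induced submodule, and $g_2$ is $T$-linear because it is constant at an absorber. Both satisfy $g_i \circ f = $ constant map at $\overline{\im(f)}$ (since $f(m) \in \im(f)$ for every $m \in M$), hence $g_1 \circ f = g_2 \circ f$. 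The assumption that $f$ is epi forces $g_1 = g_2$, which means every $n \in N$ satisfies $n \sim_{\im(f)} s$ for some $s \in \im(f)$, i.e. $[n,s,s'] \in \im(f)$ for some $s' \in \im(f)$. Using the Mal'cev identity together with the fact that $\im(f)$ is a sub-heap, this forces $n = [[n,s,s'],s',s'] \in \im(f)$, so $\im(f) = N$ and $f$ is surjective.

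\textbf{Case 2: $M$ empty.} Then $f$ is the unique map $\varnothing \lra N$, and saying that $f$ is epi is saying that $\mathrm{Hom}(N,P)$ is a singleton for every $T$-module $P$. Applying this to $P = N \boxplus N$ with the two canonical coproduct inclusions $\iota_1, \iota_2 : N \lra N \boxplus N$, one gets $\iota_1 = \iota_2$. However, by the explicit description of the direct sum recalled in Section~\ref{ssec.(co)prod} (in particular by the isomorphism \eqref{iso.direct}), if $N$ were non-empty, picking any $n_0 \in N$ would give $\iota_1(n_0) \neq \iota_2(n_0)$, a contradiction. Hence $N = \varnothing$ and $f = \mathrm{id}_{\varnothing}$ is trivially surjective.

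The only step requiring some care is verifying that the constant map $g_2$ in Case 1 is genuinely $T$-linear (which reduces to the absorber property of $\overline{\im(f)}$ recalled after \cite[Proposition 4.32(2)]{Brz:par}) and handling the empty case, which has no analogue in the ring-theoretic setting since $\pMod{R}$ has a zero object. Once the edge case is treated separately by the coproduct trick, the main argument is a direct heap-theoretic adaptation of the classical proof for modules over a ring, with $N/\im(f)$ playing the role of the cokernel and the absorber $\overline{\im(f)}$ playing the role of the zero element.
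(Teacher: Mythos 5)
Your Case 1 is essentially the paper's own proof: test the epimorphism against the quotient $N/\im(f)$ via the canonical projection and the constant map at the class of a point of $\im(f)$, then unwind the sub-heap relation. One small slip: $[[n,s,s'],s',s'] = [n,s,s']$, not $n$; the correct conclusion is immediate from the "for all" form of \eqref{rel.sub}, namely $n=[n,s,s]\in\im(f)$ (which is exactly what the paper writes), or from $n=[[n,s,s'],s',s]$. Your Case 2 is also fine and in fact more explicit than the paper, which merely asserts that the empty map into a non-empty $N$ is not an epimorphism; your coproduct argument with the two inclusions $N\rightrightarrows N\boxplus N$ (justified by \eqref{iso.direct} or by Lemma~\ref{lem:boxplusdiff} applied to the two slots of $N\sqcup N$) supplies the missing justification.
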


\begin{proof}
Assume that $M$ and $N$ are $T$-modules. If both $M$ and $N$ the empty $T$-modules, the empty map is an epimorphism (by uniqueness) and it is also surjective (trivially). If only $N$ is the empty module, then we cannot have morphisms from a non-empty to the empty module. If only $M$ is the empty module, then the empty map to $N$ is not an epimorphism. Summing up, we may assume that both $M$ and $N$ are non-empty and that  $\varphi:M\lra N$ is an epimorphism of $T$-modules. Consider the $T$-submodule $\im (\varphi)\subseteq N$ and the canonical projection $\pi: N \lra N/\im (\varphi)$. For $p\in \im (\varphi)$, consider also the constant morphism $\tau_{\bar{p}}: N \lra N/\im (\varphi)$, $n\lto \bar{p}\coloneqq \pi(p)$.

For every $m\in M$, $\varphi(m)\sim_{\im (\varphi)} p$ and hence $\pi(\varphi(m)) = \pi(p) = \bar{p} = \tau_{\bar{p}}(\varphi(m))$. Since $\varphi$ is an epimorphism,  $\pi=\tau_{\bar{p}}$ and hence  every $n\in N$ satisfies $n \sim_{\im (\varphi)} p$ (\ie for all $\varphi(m)\in \im (\varphi)$, $[n,p,\varphi(m)]\in \im (\varphi)$). In particular, $n=[n,p,p]\in \im (\varphi)$ for all $n\in N$ and so $\varphi$ is surjective.
\end{proof}

\begin{proposition}\label{prop:epicoeq}
Every epimorphism of $T$-modules is the coequalizer of its kernel pair.
\end{proposition}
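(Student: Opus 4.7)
The plan is to exploit the surjectivity of epimorphisms (Proposition~\ref{prop:episurj}) to build the comparison morphism out of $N$ by picking preimages, then verify well-definedness via the kernel pair relation and check $T$-linearity component by component.

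More precisely, let $\varphi:M\lra N$ be an epimorphism of $T$-modules. The case $M=\varnothing$ is handled separately and trivially, since then necessarily $N=\varnothing$ and the kernel pair is the empty pair, whose coequalizer is the identity on $\varnothing$. Assume from now on that $M$ (and hence $N$) is non-empty, and form the kernel pair
\[
P \;=\; \{(m,m')\in M\times M \mid \varphi(m)=\varphi(m')\}
\]
with the two obvious $T$-linear projections $p_1,p_2:P\lra M$. By construction $\varphi\circ p_1=\varphi\circ p_2$, so the verification reduces to checking the universal property.

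Assume $g:M\lra K$ is a $T$-linear map with $g\circ p_1=g\circ p_2$. By Proposition~\ref{prop:episurj}, $\varphi$ is surjective, so for every $n\in N$ one can pick a preimage $m_n\in \varphi^{-1}(n)$ and set $h(n)\coloneqq g(m_n)$. Independence of the choice of $m_n$ follows from the kernel pair condition: if $\varphi(m)=\varphi(m')$, then $(m,m')\in P$ and therefore $g(m)=g(p_1(m,m'))=g(p_2(m,m'))=g(m')$. For $T$-linearity of $h$, given $n_1,n_2,n_3\in N$ and preimages $m_i$ of $n_i$ under $\varphi$, the element $[m_1,m_2,m_3]$ is a preimage of $[n_1,n_2,n_3]$, so
\[
h\big([n_1,n_2,n_3]\big) = g\big([m_1,m_2,m_3]\big) = \big[g(m_1),g(m_2),g(m_3)\big] = \big[h(n_1),h(n_2),h(n_3)\big];
\]
analogously, $t\cdot m_n$ is a preimage of $t\cdot n$ for any $t\in T$, which gives $h(t\cdot n)=t\cdot h(n)$. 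The identity $h\circ\varphi=g$ is immediate (take $m$ itself as preimage of $\varphi(m)$), and uniqueness of $h$ follows from the fact that $\varphi$ is an epimorphism (or, equivalently, surjective).

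No step presents any genuine obstacle: the only subtlety is ensuring that the empty-module edge case is dispatched before invoking surjectivity, and making sure that $h$ is verified to respect the heap operation and the $T$-action (not only the underlying sets), which is automatic from the fact that $\varphi$ and $g$ are themselves $T$-linear.
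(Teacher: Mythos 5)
Your proof is correct and follows essentially the same route as the paper's: form the kernel pair, use surjectivity of epimorphisms (Proposition~\ref{prop:episurj}) to define the comparison map on $N$ by choosing preimages, check well-definedness via the coequalizing condition, and get uniqueness from $\varphi$ being an epimorphism. The only differences are cosmetic — you spell out the empty-module case and the $T$-linearity verification, which the paper dispatches in one line.
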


\begin{proof}
Assume that $\pi:M\lra P$ is an epimorphism of $T$-modules. The kernel relation  together with its coordinate projections
$$
\begin{gathered}
\Ker(\pi) = \{ (m_1,m_2)\in M\times M\; |\;\; \pi(m_1)=\pi(m_2)\}\subseteq M\times M,\\
p_i: \Ker(\pi)\lra M, \qquad (m_1,m_2)\lto m_i,  \qquad i=1,2,
\end{gathered}
$$ 
yield the following fork of $T$-modules
\[
\xymatrix{
\Ker(\pi) \ar@<+0.5ex>[r]^-{p_1} \ar@<-0.5ex>[r]_-{p_2} & M \ar[r]^-{\pi} & P.
}
\]
Assume that $f:M\lra N$ is any other $T$-module map such that $f\circ p_1 = f\circ p_2$ and consider $\bar{f}:P\lra N$ given by $\bar{f}(\pi(m))\coloneqq  f(m)$. The map $\bar{f}$ is well-defined because if $\pi(m_1) = \pi(m_2)$, then $(m_1,m_2)\in \Ker(\pi)$ and hence $f(m_1) = (f\circ p_1) (m_1,m_2) = (f\circ p_2) (m_1,m_2) = f(m_2)$. It is a morphism of $T$-modules because $\pi$ and $f$ are $T$-linear maps. It is a unique morphism such that $\bar{f}\circ \pi = f$ because $\pi$ is an epimorphism. Thus, $(P,\pi)$ satisfies the universal property of the coequalizer of the pair $(p_1,p_2)$. To conclude, observe that $\left(\Ker(\pi),p_1,p_2\right)$ is the kernel pair of $f$.
\end{proof}

\begin{proposition}\label{prop:monoinj}
Every monomorphism of $T$-modules is injective.
\end{proposition}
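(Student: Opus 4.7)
The plan is to mirror the argument of Proposition~\ref{prop:epicoeq} dually, using the kernel pair of $\varphi$ to read injectivity directly off the monomorphism property. Given a $T$-linear monomorphism $\varphi : M \to N$, the case $M=\varnothing$ is trivial since the unique empty map is vacuously injective, so I would assume $M$ non-empty and consider the subset
\[
K = \{(m_1,m_2)\in M\times M \mid \varphi(m_1)=\varphi(m_2)\}
\]
together with the two coordinate projections $p_1,p_2 : K \to M$, in complete analogy with the construction of the kernel pair employed in the proof of Proposition~\ref{prop:epicoeq}.

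The first, routine, step would be to verify that $K$ is a $T$-submodule of the product $M\times M$ described in Section~\ref{ssec.(co)prod}: the defining condition $\varphi(m_1)=\varphi(m_2)$ is preserved by the componentwise ternary operation (because $\varphi$ is a heap morphism) and by the componentwise $T$-action (because $\varphi$ is $T$-linear, so $\varphi(m_1)=\varphi(m_2)$ entails $\varphi(t\cdot m_1)=t\cdot\varphi(m_1)=t\cdot\varphi(m_2)=\varphi(t\cdot m_2)$ for every $t\in T$). Consequently $p_1$ and $p_2$ are genuine $T$-linear morphisms satisfying $\varphi\circ p_1=\varphi\circ p_2$ by the very definition of $K$.

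Exploiting at last the monomorphism hypothesis, I would deduce $p_1=p_2$, which unwinds to: for every $(m_1,m_2)\in K$, $m_1=p_1(m_1,m_2)=p_2(m_1,m_2)=m_2$. Since membership in $K$ is exactly the condition $\varphi(m_1)=\varphi(m_2)$, this conclusion is precisely the injectivity of $\varphi$. I do not anticipate any substantial obstacle: the only non-formal ingredient is that the product $M\times M$ is available in $\lmod{T}$ (already recorded in Section~\ref{ssec.(co)prod}), and verifying the submodule axioms on $K$ is immediate from $T$-linearity of $\varphi$.
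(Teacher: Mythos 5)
Your argument is correct and is essentially identical to the paper's own proof: both form the kernel pair $\Ker(\varphi)=\{(m_1,m_2)\mid \varphi(m_1)=\varphi(m_2)\}$ with its two coordinate projections, observe $\varphi\circ p_1=\varphi\circ p_2$, and conclude $p_1=p_2$ from the monomorphism hypothesis, which is injectivity. Your explicit verification that the kernel pair is a $T$-submodule and your treatment of the empty case are harmless additions the paper leaves implicit.
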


\begin{proof}
Let $f: M \lra N$ be a monomorphism of $T$-modules. As before, there is a fork diagram of $T$-modules
\[
\xymatrix{
\Ker(f) \ar@<+0.5ex>[r]^-{p_1} \ar@<-0.5ex>[r]_-{p_2} & M \ar[r]^-{f} & N.
}
\]
The fact that $f$ is a monomorphism implies that $p_1 = p_2$ and hence $(m,n) \in \Ker(f)$ if and only if $m=n$, which in turn entails that $f(m) = f(n)$ if and only if $m = n$.
\end{proof}

\begin{lemma}\label{lem:boxplusdiff}
Let $M,N$ be $T$-modules, $M \boxplus N$ their coproduct in $\pMod{T}$ and $\iota_M: M \lra M \boxplus N$, $\iota_N:N \lra M\boxplus N$ the structure maps of the coproduct. Then $\iota_M(m) \neq \iota_N(n)$ for all $m \in M$, $n \in N$.
\end{lemma}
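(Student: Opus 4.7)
The statement is vacuously true when either $M$ or $N$ is empty (there is nothing to compare), so the plan is to treat the case where both $M$ and $N$ are non-empty. The idea is to exploit the universal property of the coproduct to produce a $T$-linear map out of $M \boxplus N$ that separates $\iota_M(m)$ from $\iota_N(n)$.

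To this end, fix any abelian heap $H$ with at least two distinct elements $a \neq b$; a minimal choice is $H = \hH(\ZZ/2\ZZ)$. Endow $H$ with a $T$-module structure via the unique truss morphism $T \lra \star$ composed with the unital action of $\star$ on $H$ (the one for which $\star$ acts by $\id_H$). With respect to this action every element of $H$ is an absorber, since $t \cdot h = h$ for all $t\in T$ and $h\in H$. Consequently, the two constant functions
\[
f_M : M \lra H, \quad m' \lto a, \qquad f_N : N \lra H, \quad n' \lto b,
\]
are both $T$-linear: they are heap homomorphisms because any constant function between heaps respects the ternary operation ($[a,a,a]=a$), and they are compatible with the $T$-actions because their values are absorbers in $H$.

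By the universal property of the coproduct $M \boxplus N$ in $\pMod{T}$, there exists a unique $T$-linear map $f : M \boxplus N \lra H$ such that $f \circ \iota_M = f_M$ and $f \circ \iota_N = f_N$. Evaluating this map yields $f(\iota_M(m)) = a \neq b = f(\iota_N(n))$, which forces $\iota_M(m) \neq \iota_N(n)$ in $M \boxplus N$, as desired. No serious obstacle is expected: the argument is entirely structural and the only points requiring verification (that constants with absorber values are $T$-linear, and that the universal property is available in $\pMod{T}$) are immediate from the material of Section~\ref{ssec.(co)prod}.
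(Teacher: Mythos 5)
Your proposal is correct and is essentially identical to the paper's own proof: the paper also endows $\hH(\ZZ_2)$ with the trivial $T$-action and uses the two constant $T$-linear maps to $0$ and $1$, then invokes the universal property of the coproduct to separate $\iota_M(m)$ from $\iota_N(n)$. The only cosmetic difference is that you phrase the conclusion directly rather than as a contradiction.
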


\begin{proof}
Endow the abelian heap $\hH(\ZZ_2)$ with the trivial $T$-module structure: $t\cdot x = x$ for all $t \in T$ and $x \in \ZZ_2$. The assignments
\[
\varphi_M : M \lra \hH(\ZZ_2),\ m \lto 0, \qquad \text{ and } \qquad \varphi_N: N \lra \hH(\ZZ_2),\ n \lto 1,
\]
are well-defined $T$-linear morphisms and hence they induce, by the universal property of the coproduct, a unique $T$-linear map $\Phi: M \boxplus N \lra \hH(\ZZ_2)$ such that $\Phi\circ \iota_M = \varphi_M$ and $\Phi \circ \iota_N = \varphi_N$. If we suppose that there exist $m \in M$ and $n \in N$ such that $\iota_M(m) = \iota_N(n)$, then
\[
0 = \varphi_M(m) = \Phi(\iota_M(m)) = \Phi(\iota_N(n)) = \varphi_N(n) = 1,
\]
which is a contradiction.
\end{proof}

\begin{proposition}\label{prop:monoeq}
Every monomorphism of $T$-modules is  the equalizer of its cokernel pair.
\end{proposition}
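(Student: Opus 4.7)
The plan is to use the universal property of the cokernel pair against a well-chosen auxiliary test module. Let $f: M \lra N$ be a monomorphism of $T$-modules and let $(P, j_1, j_2)$ denote its cokernel pair, so $j_1 \circ f = j_2 \circ f$. By Proposition~\ref{prop:monoinj}, $f$ is injective, so any factorisation of a $T$-linear morphism through $f$ is automatically unique. I would therefore reduce the problem to showing that every $T$-linear $g: L \lra N$ with $j_1 g = j_2 g$ actually lands inside $f(M)$, or equivalently that $n \in f(M)$ for every $n \in N$ satisfying $j_1(n) = j_2(n)$.

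The argument then splits according to whether $M$ is empty or not. If $M = \varnothing$, the cokernel pair is simply given by the two coproduct injections $\iota_1, \iota_2 : N \lra N \boxplus N$, and Lemma~\ref{lem:boxplusdiff} yields $\iota_1(n) \neq \iota_2(n)$ for every $n \in N$, so the equalizer is empty, matching $f(M) = \varnothing$.

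For the case $M \neq \varnothing$, I would fix $m_0 \in M$ and build an explicit pair of maps equalized exactly on $f(M)$. Since $f(M)$ is a submodule of $N$, the quotient $N/f(M)$ is a well-defined $T$-module in which the class $\bar{0} \coloneqq \overline{f(m_0)}$ of any element of $f(M)$ is an absorber. On the product $T$-module $Q \coloneqq N \times \left(N/f(M)\right)$ I would consider the two $T$-linear maps
\[
\alpha : N \lra Q, \quad n \lto (n, \bar{n}), \qquad \beta : N \lra Q, \quad n \lto (n, \bar{0}),
\]
the $T$-linearity of $\beta$ resting on $\bar{0}$ being an absorber. Since $\overline{f(m)} = \bar{0}$ for every $m \in M$, one has $\alpha \circ f = \beta \circ f$, so the universal property of $(P, j_1, j_2)$ produces a unique $T$-linear $\psi : P \lra Q$ with $\psi j_1 = \alpha$ and $\psi j_2 = \beta$. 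Applying $\psi$ to the equality $j_1(n) = j_2(n)$ would then yield $(n, \bar{n}) = (n, \bar{0})$ in $Q$, forcing $\bar{n} = \bar{0}$; unfolding the sub-heap relation $\sim_{f(M)}$ (that is, producing $m, m' \in M$ with $[n, f(m_0), f(m)] = f(m')$) and using the injectivity of $f$ would finally give $n = f([m', m, m_0]) \in f(M)$.

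The main obstacle is precisely the construction of the test pair $(\alpha, \beta)$: one needs a pair of $T$-linear maps out of $N$ that coincide on $f(M)$ and whose equalizer is visibly no larger than $f(M)$, using the heap-theoretic quotient rather than a group-theoretic one. Once such a pair is in place, the conclusion follows mechanically from the pushout universal property, and the empty case is dispatched by Lemma~\ref{lem:boxplusdiff}.
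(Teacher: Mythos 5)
Your proposal is correct and rests on the same key construction as the paper's proof: the quotient module $N/\im(f)$, the absorber class of $\im(f)$, and the constant map to it. The only difference is one of packaging — the paper shows $f$ is the equalizer of the pair $(\pi,\tau_{\bar e}):N\to N/\im(f)$ and invokes cocompleteness to conclude it is then the equalizer of its cokernel pair, whereas you unfold that standard reduction by hand, pairing $\pi$ and $\tau_{\bar 0}$ with $\id_N$ into $N\times(N/f(M))$ and pushing the resulting cocone through the pushout; both routes are sound, and your (implicit) corestriction of $g$ to a $T$-linear map $L\to M$ is the same routine injectivity check the paper carries out explicitly.
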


\begin{proof}
 Since the category of $T$-modules is cocomplete (by \cite[Theorem 9.3.8]{Ber:inv}, for example), it is enough to prove that every monomorphism is regular, that is, that it is the equalizer of some pair of arrows.

Assume that $M$ and $N$ are $T$-modules. If $M$ is the empty $T$-module, then the empty map is a monomorphism (because there are no maps from a non-empty to the empty module) and it is also the equalizer of the pair
\[
\xymatrix @R=0pt{
N \ar[rr]^-{\iota_N} \ar[dr]_-{*} & & N \boxplus \star, \\
  & \star \ar[ur]_-{\iota_\star} &
}
\]
by Lemma \ref{lem:boxplusdiff}. If $M$ is non-empty, then $N$ cannot be the empty module, since we cannot have morphisms from a non-empty to the empty module. Summing up, we may assume that both $M$ and $N$ are non-empty and that  $f:M\lra N$ is a monomorphism of $T$-modules. Consider then $e' \in M$, $N\supseteq \im (f) \ni e = f(e')$, the quotient $T$-module $N/\im (f)$, the absorber $\overline{e}=\im (f)$ therein and the canonical projection $\pi: N \lra N/\im(f)$. Then there is a fork diagram of $T$-modules
\begin{equation}\label{eq:equalizer}
\xymatrix{
M \ar[r]^-{f} & N \ar@<+0.4ex>[r]^-{\pi} \ar@<-0.4ex>[r]_-{\tau_{\overline{e}}} & N/\im(f),
}
\end{equation}
where $\tau_{\overline{e}}$ denotes the $T$-linear morphism sending everything to $\overline{e}$. Let us check that $(M,f)$ is the equalizer of the pair $(\pi,\tau_{\overline{e}})$. 
If $P$ is another $T$-module and $g : P \lra N$ is a $T$-linear map such that $\pi(g(p)) = \overline{e}$ for all $p \in P$, then this implies that there exists $f(m) \in \im(f)$ such that
\[
[g(p),f(e'),f(m)] = [g(p),e,f(m)] \in \im(f).
\]
In particular,
\[
g(p) = \left[\left[ g(p),f(e'),f(m) \right],f(m),f(e')\right] \in \im(f),
\]
and hence there exists a (necessarily unique, in view of Proposition \ref{prop:monoinj}) element $m_p \in M$ such that $g(p) = f(m_p)$. Since, in addition,
\[
f(m_{t \cdot p}) = g(t \cdot p) = t\cdot g(p) = t\cdot f(m_p) = f(t \cdot m_p),
\]
for all $p \in P$ and $t \in T$, the assignment $h:P \lra M, p \lto m_p,$ is a $T$-linear morphism such that $f\circ h = g$ and it is unique satisfying this property, because $f$ is injective. Summing up, $(M,f)$ is indeed the equalizer of \eqref{eq:equalizer}, as claimed.
\end{proof}

Finally, since equalizers of abelian heaps and $T$-modules are simply equalizers in $\Set$ endowed with the sub-heap or $T$-submodule structure, let us describe explicitly a construction of coequalizers in the categories of abelian heaps and $T$-modules.

\begin{lemma}\label{lem.coeq}
Given a diagram 
\begin{equation}\label{coeq}
\xymatrix @C=40pt{A\ar@<.6ex>[r]^\varphi\ar@<-.6ex>[r]_\psi & B}
\end{equation}
in $\ahrd$ and any $e\in B$, define
\begin{equation}\label{ne}
N(e) = \{[\varphi(a), \psi(a), e] \mid a\in A\}.
\end{equation}
Then
\begin{enumerate}[label=(\arabic*),ref=\textit{(\arabic*)},leftmargin=0.8cm]
\item\label{coeq:item1} The set $N(e)$ is a sub-heap of $B$ and, for different choices of $e$, the heaps $N(e)$ are mutually isomorphic.
\item\label{coeq:item2} Let $\overline{N(e)} \coloneqq  \langle N(e),e\rangle$ be the sub-heap of $B$ generated by $N(e)$ and $e$. 
The quotient heap $C(e) = B/\overline{N(e)}$ is the coequalizer of \eqref{coeq}.
\item\label{coeq:item3} If \eqref{coeq} is a diagram in $T$-$\Mod$, where $T$ is a truss, then $C(e)$ is its coequalizer in $T$-$\Mod$.
\end{enumerate}
\end{lemma}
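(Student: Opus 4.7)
For part \ref{coeq:item1}, the key tool is the transposition rule \eqref{tran}. To see that $N(e)$ is closed under the ternary operation, I would apply \eqref{tran} to three elements $[\varphi(a_i),\psi(a_i),e]$, $i=1,2,3$: the nested bracket rearranges into $[[\varphi(a_1),\varphi(a_2),\varphi(a_3)],[\psi(a_1),\psi(a_2),\psi(a_3)],[e,e,e]]$, which by heap-homomorphism of $\varphi,\psi$ and the Mal'cev identity collapses to $[\varphi([a_1,a_2,a_3]),\psi([a_1,a_2,a_3]),e] \in N(e)$. For the isomorphism $N(e)\cong N(e')$, the natural candidate is the restriction of $\sigma:B\lra B$, $b\lto[b,e,e']$. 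Another application of \eqref{tran} (now with $[e,e,e]=e$ and $[e',e',e']=e'$) shows that $\sigma$ is a heap endomorphism of $B$, while associativity \eqref{def.heap.con} yields $\sigma([\varphi(a),\psi(a),e])=[\varphi(a),\psi(a),e']$, so $\sigma$ restricts to a map $N(e)\lra N(e')$ with two-sided inverse $b\lto[b,e',e]$.

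For part \ref{coeq:item2}, I would first observe that $\pi\circ\varphi=\pi\circ\psi$ for the canonical projection $\pi:B\lra B/\overline{N(e)}$: indeed $[\varphi(a),\psi(a),e]\in N(e)\subseteq\overline{N(e)}$ and $e\in\overline{N(e)}$, so $\varphi(a)\sim_{\overline{N(e)}}\psi(a)$. For the universal property, let $f:B\lra D$ be any heap map coequalising $\varphi$ and $\psi$. A one-line computation $f([\varphi(a),\psi(a),e])=[f\varphi(a),f\psi(a),f(e)]=[f\psi(a),f\psi(a),f(e)]=f(e)$ gives $N(e)\subseteq\ker_{f(e)}(f)$; since $e\in\ker_{f(e)}(f)$ as well and the latter is a sub-heap, we get $\overline{N(e)}\subseteq\ker_{f(e)}(f)$. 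Lemma~\ref{lem:quomap} then provides the unique factorization $\bar f:C(e)\lra D$ with $\bar f\circ\pi=f$.

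For part \ref{coeq:item3}, it suffices to prove that $\overline{N(e)}$ is an induced submodule of $B$ (so that $C(e)$ inherits a $T$-module structure making $\pi$ a $T$-linear epimorphism), after which the factorisation $\bar f$ from \ref{coeq:item2} is automatically $T$-linear because $f$ and $\pi$ are and $\pi$ is surjective. Concretely, I need to check that $\overline{N(e)}$ is closed under the $e$-induced action $t\cdot_e b=[t\cdot b,t\cdot e,e]$. Since $t\cdot_e$ is a heap endomorphism, closure on the generating set $N(e)\cup\{e\}$ is enough. The element $e$ is absorbed by construction; for a generator $[\varphi(a),\psi(a),e]\in N(e)$, $T$-linearity of $\varphi,\psi$ combined with \eqref{def.heap.con} yields $t\cdot_e[\varphi(a),\psi(a),e]=[[t\varphi(a),t\psi(a),te],te,e]=[\varphi(ta),\psi(ta),e]\in N(e)$.

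The proof is really a bookkeeping exercise once one has identified the correct sub-heap to quotient by; the one mild subtlety (and the reason $\overline{N(e)}$ is defined with $e$ explicitly thrown in, rather than just $N(e)$) is that the presence of $e$ is what allows simultaneously to deduce $\varphi(a)\sim\psi(a)$ in the quotient in \ref{coeq:item2}, to apply Lemma~\ref{lem:quomap} with the absorber $f(e)$, and to exhibit an element of $\overline{N(e)}$ serving as absorber in \ref{coeq:item3}. The only edge case worth flagging is $A=\varnothing$: then $N(e)=\varnothing$, $\overline{N(e)}=\{e\}$, and $C(e)\cong B$ with $\pi$ the identity, consistent with the coequaliser of the empty parallel pair on $B$.
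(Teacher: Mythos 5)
Your proposal is correct and follows essentially the same route as the paper: closure of $N(e)$ via the transposition rule, the isomorphism $N(e)\cong N(e')$ via the swap map $b\lto[b,e,e']$, the universal property via $\overline{N(e)}\subseteq\ker_{f(e)}(f)$ and Lemma~\ref{lem:quomap}, and part (3) by checking that $\overline{N(e)}$ is an induced submodule through the same computation of $t\cdot_e[\varphi(a),\psi(a),e]$. The remarks on why $e$ is adjoined to the generating set and on the empty case are sound additions but do not change the argument.
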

\begin{proof}
\ref{coeq:item1} That $N({e})$ is a sub-heap of $B$ follows by $\eqref{tran}$ and the fact that $\varphi,\psi$ are morphisms of heaps. Let $f\in B$. The isomorphism between $N({e})$ and $N({f})$ is given by $\tau_e^f$ of \eqref{swap}. 

\ref{coeq:item2} 
Let us check that the canonical projection $\pi: B\lra C(e) = B/\overline{N(e)}$ coequalizes $\varphi$ and $\psi$. Since $e\in \overline{N(e)}$ and $[\varphi(a),\psi(a),e]\in \overline{N(e)}$,  $\varphi(a) \sim_{\overline{N(e)}}\psi(a)$, and hence $\pi(\varphi(a))=\pi(\psi(a)).$ Therefore, there is the required fork
$$
\xymatrix @C=40pt{A\ar@<.6ex>[r]^\varphi\ar@<-.6ex>[r]_\psi & B \ar[r]^-{\pi}& C(e).}
$$
Now, let us assume that there exists another pair $(h,H)$ such that $h:B\rightarrow H$ and $h\circ \varphi=h\circ\psi$. Observe that, for all $a\in A,$  
$$h([\varphi(a),\psi(a),e])=[h(\varphi(a)),h(\psi(a)),h(e)]=h(e),$$
where the second equality follows from $h\circ \varphi=h\circ\psi$ and Mal'cev identity. 
Thus, $h(x)=h(e)$ for all $x\in \overline{N(e)}$ and so $\overline{N(e)}\subseteq \ker_{h(e)}(h)$. In view of Lemma~\ref{lem:quomap}, there is a unique  heap homomorphism $f:C({e})\rightarrow H$ given by $f(\pi(b))=h(b)$ for all $b\in B$.

\ref{coeq:item3} To prove that $C({e})$ is a coequalizer in the category of modules it is enough to prove that $\overline{N(e)}$ is an induced $T$-submodule. Since
\[
\begin{aligned}
t\cdot_{e}[\varphi(a),\psi(a),e]&=[t\cdot [\varphi(a),\psi(a),e],t\cdot e,e] =[[t\cdot \varphi(a),t\cdot \psi(a),t\cdot e],t\cdot e,e] \\
&=[t \cdot\varphi(a),t\cdot \psi(a),[t\cdot e,t\cdot e,e]] =[t\cdot\varphi(a),t\cdot\psi(a),e]\\
&=[\varphi(ta),\psi(ta),e]\in\overline{N(e)}
\end{aligned}
\]
and $t \cdot_e e = e$, it follows that $C({e})$ is a well-defined quotient module and the proof that $C({e})$ is the coequalizer is analogous to \ref{coeq:item2}.
\end{proof}

\section{Tensor product of modules over a truss}\label{sec.tens}
In this section we construct the tensor product between modules over a truss and study its categorical properties. 

\subsection{The definition and construction of the tensor product}\label{ssec.tensprod}
The aim of this section is to define and show the existence of a tensor product of modules over a truss.
\begin{definition}\label{def.balanced}
Let  $H,M,N$ be abelian heaps. A function $\varphi:M\times N\lra H$ is said to be {\em bilinear} if, for all $m,m',m'' \in M$ and $n,n',n''\in N$,
\begin{subequations}\label{balance}
\begin{equation}\label{lin.l}
 \varphi([m,m',m''],n)=[\varphi(m,n),\varphi(m',n),\varphi(m'',n)],
 \end{equation}
 \begin{equation}\label{lin.r}
 \varphi(m,[n,n',n''])=[\varphi(m,n),\varphi(m,n'),\varphi(m,n'')].
 \end{equation}
\end{subequations}
In addition, if $T$ is a truss, $M$ is a right $T$-module and $N$ is a left $T$-module, then $\varphi$ is said to be {\em $T$-balanced} if, for all $m\in M$, $n \in N$, $t\in T$, 
\begin{subequations}[resume]
 \begin{equation}\label{t.balance} 
\varphi(m\cdot t,n)=\varphi(m,t\cdot n).
\end{equation}
\end{subequations}
\end{definition}

\begin{remark}\label{rem:heapisbilinear}
We note in passing that, due to the Mal'cev conditions, any heap homomorphism $\varphi: M\times N\lra H$ satisfies conditions \eqref{lin.l}--\eqref{lin.r} in Definition~\ref{def.balanced} (but, of course, a function satisfying \eqref{lin.l}--\eqref{lin.r} needs not be a homomorphism of heaps).
\end{remark}

The definition of the tensor product of modules over a truss is given by the  following universal property, reminiscent of that for the tensor product of modules over a ring.
\begin{definition}\label{tensor}
Let $M$ be a right $T$-module and $N$ be a left $T$-module. Then  a {\em tensor product} (of $M$ and $N$ over $T$) is a pair $(M\otimes_{T} N,\varphi)$ consisting of an abelian heap $M\otimes_{T} N$ and a $T$-balanced bilinear map $\varphi:M\times N\lra M\otimes_{T} N$ such that for any heap $H$ and any $T$-balanced bilinear map $f:M\times N\lra H$ there exists a unique heap morphism $\hat{f}$ rendering commutative the following diagram 
$$
\xymatrix @R=20pt{M\times N \ar[rr]^-{\varphi} \ar[dr]_-f&&  M\otimes_{T} N\ar@{-->}[dl]^-{\exists !\, \hat{f}}
\\
& H. &}
$$
\end{definition}

 As for tensor products of modules over rings, if a tensor product of $M$ and $N$ over $T$ exists, then it is unique up to a unique isomorphism. Thus, we will speak about \emph{the} tensor product $M\otimes_{T} N$, often omitting the structure map $\varphi$ as well.

Since any abelian heap is a unital module over the terminal truss $\star$ in a unique way, one can consider tensor product of heaps. In this case the balancing condition \eqref{t.balance} is tautologically satisfied. The tensor product of heaps $M$ and $N$ viewed as unital $\star$-modules is denoted by $M\otimes N$. Observe that, differently from what happens for modules over a ring, the fact that $\id_{M\times N}$ is bilinear entails that there exists a unique morphism of heaps $\sigma : M \otimes N \to M \times N$ such that $\sigma \circ \varphi = \id_{M\times N}$ (see Remark \ref{rem:heapisbilinear}).

Next we give an explicit construction of tensor products, thus establishing their existence.

\begin{theorem}\label{thm.tensor}
Tensor product of $T$-modules exists.
\end{theorem}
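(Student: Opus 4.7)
\smallskip

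\noindent\emph{Proof proposal.} The plan is to adapt the classical generators-and-relations construction of tensor products, replacing free modules and their submodule quotients with the free abelian heap on $M\times N$ quotiented by an appropriate sub-heap. Concretely, I would use the free abelian heap $\Aa(M\times N)$ with canonical injection $\iota\colon M\times N\lra\Aa(M\times N)$, identify three families of relations encoding bilinearity in each slot together with $T$-balancing, and take the quotient of $\Aa(M\times N)$ by the sub-heap generated by these relations and a base point. The existence of the universal $\hat f$ would then follow from Lemma~\ref{lem:quomap}.

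\smallskip

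More precisely, assume first that $M$ and $N$ are non-empty (otherwise $\Aa(M\times N)=\varnothing$ can itself serve as the tensor product since the balancing conditions are vacuous). Fix any $e_0\in\Aa(M\times N)$ and let $R\subseteq\Aa(M\times N)$ be the collection of elements of the three forms
\[
\begin{aligned}
&[\iota([m,m',m''],n),\,[\iota(m,n),\iota(m',n),\iota(m'',n)],\,e_0], \\
&[\iota(m,[n,n',n'']),\,[\iota(m,n),\iota(m,n'),\iota(m,n'')],\,e_0], \\
&[\iota(m\cdot t,n),\,\iota(m,t\cdot n),\,e_0],
\end{aligned}
\]
as $m,m',m''\in M$, $n,n',n''\in N$, $t\in T$ vary. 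Set $S\coloneqq\langle R\cup\{e_0\}\rangle$, define $M\otimes_T N\coloneqq\Aa(M\times N)/S$ with quotient projection $\pi$, and take $\varphi\coloneqq\pi\circ\iota$. By construction each element of $R$ lies in $S$ and is of the form $[a,b,e_0]$ with $e_0\in S$, hence $a\sim_S b$ by \eqref{rel.sub}; applying $\pi$ one obtains exactly the three identities \eqref{lin.l}, \eqref{lin.r} and \eqref{t.balance} for $\varphi$.

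\smallskip

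For the universal property, given a $T$-balanced bilinear $f\colon M\times N\lra H$, the universal property of the free abelian heap produces a unique heap morphism $\tilde f\colon \Aa(M\times N)\lra H$ extending $f$. The bilinearity and $T$-balancing of $f$, combined with the Mal'cev identities, imply that $\tilde f$ sends every element of $R$ to $\tilde f(e_0)$; for instance on a generator of the first type one obtains $[f([m,m',m''],n),f([m,m',m''],n),\tilde f(e_0)]=\tilde f(e_0)$. Using the explicit description \eqref{gen.sub} of the sub-heap generated by a subset, every element of $S$ is a bracketing $[s_1,\ldots,s_{2k+1}]$ with $s_i\in R\cup\{e_0\}$, so $\tilde f(S)=\{\tilde f(e_0)\}$ and thus $S\subseteq\ker_{\tilde f(e_0)}(\tilde f)$. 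Lemma~\ref{lem:quomap} then yields a unique heap morphism $\hat f\colon M\otimes_T N\lra H$ with $\hat f\circ\pi=\tilde f$, hence $\hat f\circ\varphi=f$. Uniqueness of $\hat f$ follows from the surjectivity of $\pi$ together with the fact that $\iota(M\times N)$ generates $\Aa(M\times N)$ as a heap.

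\smallskip

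The main subtlety, and essentially the only non-routine point, is that in the heap setting one cannot simply quotient by a ``subgroup of differences''; relations must be imposed via the sub-heap equivalence \eqref{rel.sub}, which forces the awkward-looking insertion of the base point $e_0$ in every generator of $R$ and the requirement that $e_0$ itself be adjoined to the generating set. One should verify that the construction is independent of the choice of $e_0$ up to canonical isomorphism, which follows from the fact that different choices give isomorphic sub-heaps via the swap maps \eqref{swap}, so that the quotients are canonically identified.
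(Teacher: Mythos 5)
Your proposal is correct and follows essentially the same route as the paper: quotient the free abelian heap $\Aa(M\times N)$ by the sub-heap generated by the same three families of base-pointed relations, verify bilinearity and $T$-balancing of $\pi\circ\iota$ via the sub-heap relation \eqref{rel.sub}, and obtain the universal map from Lemma~\ref{lem:quomap}. The only cosmetic difference is that you adjoin $e_0$ explicitly to the generating set, whereas in the paper $e$ already lies in the generated sub-heap automatically (take $m=m'=m''$ in the first family and apply the Mal'cev identities).
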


\begin{proof}
Let $M$ be a right $T$-module and $N$ be a left $T$-module. If $M$ or $N$ is empty, then $M \ot_T N$ is empty. Otherwise, let us consider the free abelian heap $\Aa(M\times N)$. Choose an arbitrary element $e=(e_{1},e_{2})$ of the free heap $\Aa(M\times N)$  and let  $S_T({e})$ be the sub-heap  of  $\Aa(M\times N)$ generated  by elements of the form:
\begin{subequations}\label{elements}
\begin{equation}\label{element1}
\Big[\big([m,m',m'']_M,n\big),\big[(m,n),(m',n),(m'',n)\big]_{\Aa},e\Big]_{\Aa},
\end{equation}
\begin{equation}\label{element2}
\Big[\big(m,[n,n',n'']_N\big),\big[(m,n),(m,n'),(m,n'')\big]_{\Aa},e\Big]_{\Aa},
\end{equation}
\begin{equation}\label{element3}
\big[(m\cdot t,n), (m,t \cdot n), e \big]_{\Aa},
\end{equation}
\end{subequations}
for all $m,m',m''\in M,$ $n,n',n''\in N$ and $t\in T$. Note that the transposition rule \eqref{tran} together with the idempotent property of a heap operation  imply  that every element of $S_T({e})$  has the form $[a,b,e]$, where $a,b\in \Aa(M\times N)$. Also note that $e\in S_T(e)$.
For an abelian heap $H$, consider a $T$-balanced bilinear map $f :M\times N\lra H$. By treating $f$ as a function and by using the universal property of the free heap, we can construct the following commutative diagram:
\begin{equation}\label{graf1}
\xymatrix @R=20pt{M\times N \ar[r]^-{\iota_{M\times N}} \ar[dr]_-f& \Aa(M\times N)  \ar@{-->}[d]^-{\exists !\, \hat{f}}\ar[r]^-{\pi_{S_T(e)}}& \Aa(M\times N)/S_T(e)\ar@{-->}[dl]^-{\exists !\, \hat{\hat{f}}}
\\
& H, &}
\end{equation}
where $\iota_{M\times N}$ is the canonical monomorphism and $\pi_{S_T(e)}$ is the canonical epimorphism. The left triangle is given by the free heap property.  The existence of the map $\hat{\hat{f}}$ is guaranteed  provided that $\hat{f}$ respects the sub-heap relation $\sim_ {S_T(e)}$. By using the definition of $\hat{f}$ and that $f$ is a $T$-balanced bilinear map, we find
$$
\begin{aligned}
\hat{f}\bigg(\Big[\big(m,[n,n',n'']_N\big),& \big[(m,n),(m,n'),(m,n'')\big]_{\Aa},e\Big]_{\Aa}\bigg) \\
&\stackrel{\phantom{\eqref{lin.r}}}{=}\bigg[f\Big(m,[n,n',n'']_N\Big),\big[f(m,n),f(m,n'),f(m,n'')\big]_{H},f(e)\bigg]_{H} \\ 
&\stackrel{\eqref{lin.r}}{=}\Big[f\big(m,[n,n',n'']_N\big),f\big(m,[n,n',n'']_N\big),f(e)\Big]_{H}=f(e)=\hat{f}(e).
\end{aligned}
$$
By symmetric arguments,
$$
\begin{aligned}
\hat{f}\bigg(\Big[\big([m,m',m'']_M,n\big), \big[(m,n),(m',n),(m'',n)\big]_{\Aa},e\Big]_{\Aa}\bigg)
=\hat{f}(e).
\end{aligned}
$$
Finally,
$$
\begin{aligned}
 \hat{f}\big([(m\cdot t,n),(m,t\cdot n),e]_{\Aa}\big)& =\big[f(m\cdot t,n),f(m,t\cdot n),f(e)\big]_{H} \\ &= \big[f(m\cdot t,n),f(m\cdot t,n), f(e)\big]=f(e)=\hat{f}(e).
\end{aligned}
$$
This means that $S_T(e)\subset \ker_{\hat{f}(e)}(\hat{f})$ and therefore, in view of Lemma \ref{lem:quomap}, $\hat{f}$ respects the sub-heap relation $\sim_ {S_T(e)}$ as required. Consequently, the  heap homomorphism $\hat{\hat{f}}$ exists. 

Define: 
$$
\varphi \coloneqq \left(\pi_{S_T(e)}\circ\iota_{M\times N}\right): M\times N \longrightarrow  \Aa(M\times N)/S_T(e), \qquad (m,n) \lto \overline{(m,n)}.
$$
Since $e \in S_T(e)$, by definition of $\sim_{S_T(e)}$ and of $[-,-,-]$ on $\Aa(M\times N)/S_T(e)$,
\[
\begin{gathered}
\overline{\big([m,m',m'']_M,n\big)} = \overline{\big[(m,n),(m',n),(m'',n)\big]_{\Aa}} = \big[\overline{(m,n)},\overline{(m',n)},\overline{(m'',n)}\big], \\
\overline{\big(m,[n,n',n'']_N\big)} = \overline{\big[(m,n),(m,n'),(m,n'')\big]_{\Aa}} = \big[\overline{(m,n)},\overline{(m,n')},\overline{(m,n'')}\big], \\
\text{and} \qquad \overline{(m\cdot t,n)} = \overline{(m,t \cdot n)}
\end{gathered}
\]
hold in $\Aa(M\times N)/S_T(e)$, that is to say, $\varphi$ is a $T$-balanced bilinear map.
It remains to prove that the map $\hat{\hat{f}}$ constructed in diagram \eqref{graf1} is a unique homomorphism such that $f= \hat{\hat{f}}\circ \pi_{S_T(e)}\circ\iota_{M\times N}$. Suppose that there exists another homomorphism of heaps $h: \Aa(M\times N)/S_T(e) \lra H$ such that $f= h\circ \pi_{S_T(e)}\circ\iota_{M\times N}$. Then 
$$
h\circ \pi_{S_T(e)}\circ\iota_{M\times N} = \hat{\hat{f}}\circ \pi_{S_T(e)}\circ\iota_{M\times N},
$$
and, since both  $h\circ \pi_{S_T(e)}$ and $\hat{\hat{f}}\circ \pi_{S_T(e)}$ are homomorphisms of heaps, the universal property of the free (abelian) heap implies that 
$$
h\circ \pi_{S_T(e)} = \hat{\hat{f}}\circ \pi_{S_T(e)}.
$$
Since $\pi_{S_T(e)}$ is an epimorphism, it follows that $h=\hat{\hat{f}}$ and the uniqueness is established. 
Therefore,  the pair $(\Aa(M\times N)/S_T(e),\varphi)$ is the tensor product of $M$ and $N$.
\end{proof}
 
  We note in passing that, up to isomorphism, the construction of the tensor product does not depend on the chosen element $e$. This independence can be seen as a consequence of the universal property of tensor products, or it can be observed directly by employing the swap automorphism \eqref{swap}.

Following the ring-theoretic conventions we define, for all $m\in M$ and $n\in N$, 
\begin{equation}\label{tens.simp}
m\ot n \coloneqq \overline{(m,n)} = \left(\pi_{S_T(e)}\circ\iota_{M\times N}\right)(m,n)\in M\ot_T N
\end{equation}
and we refer to each of $m\ot n$ as to a {\em simple tensor}. As a rule, we do not decorate $\ot$ with a subscript $T$, but occasionally it might be useful to indicate an element $e$ chosen in the definition of $S_T(e)$, in which case we write $m\ot_en$.  With this terminology and notation at hand, $M\ot_TN$ can be understood as an abelian heap freely generated by simple tensors subject to relations:
\begin{subequations}
\begin{equation}\label{simp.l}
[m,m',m'']\ot n = [m\ot n, m' \ot n, m''\ot n], \qquad \mbox{for all $m,m',m''\in M$, $n\in N$},
\end{equation}
\begin{equation}\label{simp.r}
m\ot [n,n',n''] = [m\ot n, m \ot n', m\ot n''], \qquad \mbox{for all $m\in M$, $n,n',n''\in N$},
\end{equation}
\begin{equation}\label{simp.b}
m\cdot t\ot n = m\ot t\cdot n, \qquad \mbox{for all $m\in M$, $n\in N$, $t\in T$}.
\end{equation}
\end{subequations}

We conclude the subsection with a technical result that will be of significant importance in \S\ref{sec.morita}.
\begin{proposition}\label{prop:tensunital}
Let $T$ be a truss and $\uT$ be its unital extension as in \S\ref{ssec.unital}. Then for every right $T$-module $M$, $M \otimes_T \uT \cong M \cong \Hom_T\left(\uT,M\right)$ as right $T$-modules, where $\uT$ has the $T$-$T$-bimodule structure induced by the truss homomorphism $\iota_T : T \to \uT$. Moreover, for $M$ a right $T$-module and $N$ a left $T$-module
\[
m\cdot z \otimes_T n = m \otimes_T z \cdot n,
\]
for all $m \in M$, $n \in N$, $z \in \uT$.
\end{proposition}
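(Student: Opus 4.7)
By Theorem~\ref{thm:unital}, every right $T$-module $M$ is canonically a \emph{unital} right $\uT$-module extending the original action along $\iota_T$. I will denote the extended $\uT$-action by the same symbol $\cdot$; in particular $m\cdot {*} = m$ for all $m\in M$, and $m\cdot\iota_T(t)=m\cdot t$. With this in hand, the plan is to produce explicit mutually inverse maps for both isomorphisms and to deduce the final identity from the defining $T$-balancing together with distributivity.

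\textbf{Step 1: the tensor isomorphism.} The assignment $\mu:M\times \uT \lra M$, $(m,z)\lto m\cdot z$, is bilinear by \eqref{module2}--\eqref{module3} and $T$-balanced because $(m\cdot t)\cdot z = m\cdot(\iota_T(t)z) = m\cdot (t\cdot z)$ for $t\in T$, $z\in\uT$. By the universal property of $M\otimes_T\uT$ (Theorem~\ref{thm.tensor}), $\mu$ factors through a unique heap morphism $\hat{\mu}:M\otimes_T\uT\lra M$, $m\otimes z\lto m\cdot z$. Right $T$-linearity of $\hat{\mu}$ is straightforward: $\hat{\mu}((m\otimes z)\cdot t) = \hat{\mu}(m\otimes z\iota_T(t)) = m\cdot(z\iota_T(t)) = (m\cdot z)\cdot t$. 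The candidate inverse is $\nu:M\lra M\otimes_T\uT$, $m\lto m\otimes {*}$, which is clearly $T$-linear. Unitality immediately gives $\hat{\mu}(\nu(m)) = m\cdot {*} = m$. The non-trivial direction $\nu\circ\hat\mu=\id$ amounts to showing $(m\cdot z)\otimes {*} = m\otimes z$ for every $z\in\uT$; this is the main computational step and is handled together with Step~3 below.

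\textbf{Step 2: the Hom isomorphism.} Consider $\uT$ as a right $T$-module (via $\iota_T$) and define $\ev: \rhom{T}{\uT}{M} \lra M$, $f\lto f({*})$, together with $\eta: M\lra \rhom{T}{\uT}{M}$, $m\lto (z\lto m\cdot z)$, where again the dot is the unital $\uT$-action. That $\eta(m)$ is right $T$-linear follows from $(m\cdot z)\cdot \iota_T(t) = m\cdot(z\iota_T(t))$, and right $T$-linearity of both $\ev$ and $\eta$ (with respect to the right $T$-action on $\rhom{T}{\uT}{M}$ induced by the left $T$-action on $\uT$) is a direct check. Clearly $\ev(\eta(m))=m\cdot {*} = m$, while $\eta(\ev(f))(z) = f({*})\cdot z$, and showing that this equals $f(z)$ reduces, again, to the claim that for every $z\in\uT$ and every $\uT$-linear $f$ one has $f({*})\cdot z = f(z)$: this is exactly the content of the forthcoming Step~3 applied to $M=\rhom{T}{\uT}{M}$.

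\textbf{Step 3: the key identity.} It remains to prove the ``moreover'' part, namely that $m\cdot z\otimes_T n = m\otimes_T z\cdot n$ for all $z\in\uT$, and specialising $N=\uT$, $n={*}$ gives the remaining identity required in Step~1 (and a completely analogous argument closes Step~2). Every $z\in\uT = T\boxplus \star$ is represented by a symmetric word $[z_1,\dots,z_{2k+1}]$ with each $z_i\in T\sqcup \star$. By the distributive laws \eqref{module2}--\eqref{module3} applied on both sides and by relations \eqref{simp.l}--\eqref{simp.r},
\[
m\cdot z\otimes n = \bigl[(m\cdot z_1)\otimes n,\ldots,(m\cdot z_{2k+1})\otimes n\bigr] \quad\text{and}\quad m\otimes z\cdot n = \bigl[m\otimes (z_1\cdot n),\ldots,m\otimes(z_{2k+1}\cdot n)\bigr],
\]
so it suffices to verify the identity for $z_i\in T$ and for $z_i={*}$. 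The first case is exactly the $T$-balancing relation \eqref{simp.b}; the second uses unitality: $m\cdot {*} \otimes n = m\otimes n = m\otimes {*}\cdot n$. The main (and essentially only) obstacle in the whole argument is precisely this reduction from an arbitrary $z\in\uT$ to its generators in $T\sqcup\star$, which is settled cleanly once one recalls the description of $\uT$ as a coproduct of heaps and invokes the explicit presentation \eqref{simp.l}--\eqref{simp.b} of the tensor product.
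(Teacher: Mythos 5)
Your proposal is correct and follows essentially the same route as the paper: the same pair of explicit mutually inverse maps for each isomorphism, and the same reduction of the ``moreover'' identity to generators $z_i\in T\sqcup\star$ via the symmetric-word decomposition of $\uT$, using \eqref{simp.b} in the $T$-case and unitality in the $\star$-case. The only (harmless) imprecision is calling the verification $f(z)=f({*})\cdot z$ in Step~2 an \emph{application} of Step~3 to $\rhom{T}{\uT}{M}$ -- it is rather the analogous generator-by-generator argument using that $f$ is a heap morphism -- but you acknowledge this yourself, and otherwise you merely make explicit the ``straightforward checks'' that the paper leaves to the reader.
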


\begin{proof}
Consider the assignment
\[
\alpha : M \lra M \otimes_T \uT, \qquad m \lto m \otimes_T {*}.
\]
This is a heap homomorphism in view of \eqref{simp.l} and it is $T$-linear because for all $t \in T$,
\[
\alpha(m \cdot t) = m \cdot t \otimes_T {*} \stackrel{\eqref{simp.b}}{=} m \otimes_T \iota_T(t)\cdot {*} \stackrel{\eqref{eq:unital}}{=} m \otimes_T {*} \cdot \iota_T(t) = (m \otimes_T {*}) \cdot t.
\]
The other way around, recall that the underlying abelian heap of $\Uu(M)$ is $M$ itself, which now is considered as a unital $\uT$-module via the bilinear morphism $\varrho : M \times \uT \lra M$ uniquely determined by
\[
\begin{cases} (m,t) \lto m \cdot t, & \mbox{$t \in T $},\\ (m,{*}) \lto m .\end{cases}
\]
The associativity of the $T$-action entails that $\varrho$ is also $T$-balanced, whence it factors through the tensor product over $T$ giving
\[
\beta : M \otimes_T \uT \lra M.
\]
A straightforward check shows that $\alpha$ and $\beta$ are inverses of each other. Concerning the second isomorphism, consider the right $T$-linear morphism
\[
\Hom_T(\uT,M) \lra M, \qquad f \lto f(*),
\]
and the assignment $M \lra \Hom_T(\uT,M)$, sending every $m \in M$ to the right $T$-linear morphism uniquely determined by
\[
\begin{cases}
t \lto m \cdot t, & t \in T,\\
* \lto m. &
\end{cases}
\]
Again, a straightforward check shows that they are inverses of each other. To prove  the last assertion, recall that an element $z$ in $\uT$ is of the form $[a_1,\ldots,a_{s}]$, where $a_i \in T \sqcup \star$, for all $i=1,\ldots,s$ and $s$ odd. Therefore,
\[
\begin{aligned}
m\cdot z \otimes_T n &= m \cdot [a_1,\ldots,a_{s}] \otimes_T n = [m \cdot a_1\otimes_T n,\ldots,m \cdot a_{s}\otimes_T n] \\
&\stackrel{(\bullet)}{=} [m\otimes_T a_1 \cdot n,\ldots,m\otimes_T a_{s} \cdot n] = m \otimes_T z \cdot n,
\end{aligned}
\]
where $(\bullet)$ follows from the fact that either $m \cdot a_i \otimes_T n = m \cdot t \otimes_T n = m \otimes_T t \cdot n$ (if $a_i \in T$) or $m \cdot a_i \otimes_T n = m \otimes_T n = m \otimes_T a_i \cdot n$ (if $a_i = * \in \star$).
\end{proof}

\subsection{Functorial properties of tensor products}\label{ssec.functorial}
In parallel to the ring-theoretic tensor product, tensoring with a fixed bimodule defines a functor between categories of modules over trusses.

\begin{proposition}\label{prop.tens.act}
Let $T$ and $R$ be trusses. 
\begin{enumerate}[label=(\arabic*),ref=\textit{(\arabic*)},leftmargin=0.8cm]

\item\label{tens:item1} If $M$ is a right $T$-module and $N$ is a $T$-$R$-bimodule, then $M\otimes_T N$ is a right $R$-module with the action
$$
(M\ot_T N ) \times R\lra M\ot_T N , \qquad (m\ot n, r) \lto m\ot n\cdot r.
$$
 If $R$ admits a unit and $N$ is unital, then $M \ot_T N$ is unital as well. Symmetrically, if $M$ is an $R$-$T$-bimodule  (unital over $R$) and $N$ is a left  $T$-module, then  $M\otimes_T N$ is a  (unital) left $R$-module.

 \item\label{tens:item2} Let $N$ be a $T$-$R$-bimodule and let $\varphi: M\lra M'$ be a homomorphism of right $T$-modules. Then the map  $\varphi\ot N$ defined on simple tensors as
 $$
 \varphi\ot N : M\ot_T N\lto M'\ot_T N, \qquad m\ot n\lto \varphi(m)\ot n,
 $$
 extends uniquely to a homomorphism of right $R$-modules. Symmetrically, if $M$ is an $R$-$T$ bimodule, then  any left $T$-module homomorphism $\varphi: N \lra N'$ gives rise to a left $R$-module homomorphism,
 $$
 M\ot \varphi : M\ot_T N\lto M\ot_T N', \qquad m\ot n \lto m\ot \varphi(n).
 $$

 \item\label{tens:item3} The constructions in items (1) and (2) yield functors $-\ot_T N: \rmod{T} \lra \rmod{R}$ and $M\ot_T - : \lmod{T} \lra \lmod{R}$. Furthermore, if $R$ admits a unit and $M,N$ are unital (over $R$), then they yield functors $-\ot_T N: \rmod{T} \lra \rmod{R_1}$ and $M\ot_T - : \lmod{T} \lra \lmod{R_1}$.
 \end{enumerate}
 \end{proposition}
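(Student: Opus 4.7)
The plan for part \ref{tens:item1} is to exploit the universal property of the tensor product to define the right $R$-action on $M \otimes_T N$. For each fixed $r \in R$, I would consider the map
\[
\mu_r : M \times N \lra M \otimes_T N, \qquad (m,n) \lto m \otimes (n \cdot r).
\]
Since $N$ is a $T$-$R$-bimodule, the right $R$-action distributes over the heap operation in the second argument and commutes with the left $T$-action, hence $\mu_r$ is bilinear and $T$-balanced (the balancing uses that $(t \cdot n)\cdot r = t \cdot (n\cdot r)$). By Theorem~\ref{thm.tensor} it factors through a unique heap endomorphism $\hat{\mu}_r : M \otimes_T N \lra M \otimes_T N$ satisfying $\hat{\mu}_r(m \otimes n) = m \otimes (n\cdot r)$. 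I would then check that the assignment $r \lto \hat{\mu}_r$ is a morphism of trusses $R \lra E(M\otimes_T N)$: distributivity $\hat{\mu}_{[r,r',r'']} = [\hat{\mu}_r,\hat{\mu}_{r'},\hat{\mu}_{r''}]$ follows by evaluating on simple tensors using \eqref{simp.r} and the right $R$-action axiom \eqref{module2} on $N$, whereas associativity $\hat{\mu}_r \circ \hat{\mu}_{r'} = \hat{\mu}_{rr'}$ similarly follows from the associativity of the $R$-action on $N$. Both identities are first established on simple tensors and then extended to the whole tensor product by uniqueness in the universal property (equivalently, because simple tensors generate $M \otimes_T N$ as an abelian heap). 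Unitality when $R$ is unital and $N$ is unital is immediate from $\hat\mu_1(m\ot n) = m\ot n$. The left-module variant is symmetric.

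For part \ref{tens:item2}, I would apply the universal property to the composition
\[
M \times N \xrightarrow{\varphi \times \id_N} M' \times N \xrightarrow{\varphi'} M' \otimes_T N,
\]
where $\varphi'$ is the canonical $T$-balanced bilinear map. Since $\varphi$ is a right $T$-linear heap homomorphism, this composite is bilinear (in view of Remark~\ref{rem:heapisbilinear}) and $T$-balanced, giving a unique heap homomorphism $\varphi \otimes N : M \otimes_T N \lra M' \otimes_T N$ satisfying $m \otimes n \lto \varphi(m) \otimes n$. To see it is $R$-linear, one compares its composition with $\hat\mu_r$ on both sides: both morphisms $(\varphi \otimes N) \circ \hat{\mu}_r$ and $\hat{\mu}_r \circ (\varphi \otimes N)$ agree on simple tensors (sending $m \otimes n$ to $\varphi(m) \otimes (n \cdot r)$), hence they coincide by the universal property.

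For part \ref{tens:item3}, functoriality is then routine: $\id_M \otimes N$ agrees with $\id_{M \otimes_T N}$ on simple tensors and therefore everywhere, and $(\psi \circ \varphi)\otimes N$ agrees with $(\psi \otimes N) \circ (\varphi \otimes N)$ on simple tensors and hence on $M \otimes_T N$. The unital refinement in the last sentence of the proposition is a direct consequence of the unital statement already proved in \ref{tens:item1}.

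The main obstacle, and the step that requires most care, is the verification that the truss homomorphism $R \lra E(M \otimes_T N)$ is well defined in part \ref{tens:item1}, since each identity (distributivity and multiplicativity) must first be read off on simple tensors and then transferred to the whole of $M \otimes_T N$; doing this cleanly requires systematic appeal to the uniqueness clause of Theorem~\ref{thm.tensor}, or equivalently to the fact that any heap homomorphism out of $M \otimes_T N$ is determined by its values on simple tensors (which generate the whole heap by construction, since $M \otimes_T N$ is a quotient of $\Aa(M \times N)$).
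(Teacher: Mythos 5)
Your proposal follows essentially the same route as the paper's proof: for each $r\in R$ you define the $T$-balanced bilinear map $(m,n)\mapsto m\otimes (n\cdot r)$, factor it through $M\otimes_T N$ via Theorem~\ref{thm.tensor}, and verify the heap-distributivity and multiplicativity of $r\mapsto\hat\mu_r$ on simple tensors, with parts (2) and (3) handled just as in the paper (your simple-tensor comparison for the $R$-linearity of $\varphi\otimes N$ is if anything slightly more explicit than the paper's remark). The one point to correct is the composition order in the multiplicativity check: since the action is on the right, what holds is $\hat\mu_{rr'}=\hat\mu_{r'}\circ\hat\mu_{r}$, so that it is $R^\circ\lra E(M\otimes_T N)$ (not $R$ itself) that is a truss morphism --- which is exactly what a right $R$-module structure requires --- rather than the identity $\hat\mu_{r}\circ\hat\mu_{r'}=\hat\mu_{rr'}$ as you wrote it.
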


 \begin{proof}
\begin{description}[leftmargin=0.3cm,font=\normalfont,itemindent=0cm]
\item[\ref{tens:item1}]  Since $N$ is a right $R$-module, for every $r\in R$ we can consider the assignment
\[
\rho_r:M \times N \lra M \otimes_T N, \qquad (m,n) \lto m\otimes n\cdot r.
\]
It satisfies
\[
\begin{aligned}
\rho_r(([m,m,',m''],n)) & \stackrel{\phantom{\eqref{simp.r}}}{=} [m,m,',m''] \otimes n\cdot r \stackrel{\eqref{simp.l}}{=}  \left[m\otimes n\cdot r, m'\otimes n\cdot r, m''\otimes n\cdot r\right] \\
 & \stackrel{\phantom{\eqref{simp.r}}}{=} \left[\rho_r(m, n), \rho_r(m', n), \rho_r(m'' , n)\right], \\
\rho_r(m,[n,n',n'']) & \stackrel{\phantom{\eqref{simp.r}}}{=} m \otimes [n,n',n'']\cdot r = m \otimes [n\cdot r,n'\cdot r,n''\cdot r] \\
 & \stackrel{\eqref{simp.r}}{=}[m \otimes n\cdot r,m \otimes n'\cdot r,m \otimes n''\cdot r] \\
 & \stackrel{\phantom{\eqref{simp.r}}}{=} \left[\rho_r(m, n), \rho_r(m, n'), \rho_r(m, n'')\right], \\
\rho_r((m\cdot t,n)) & \stackrel{\phantom{\eqref{simp.r}}}{=} m\cdot t \otimes n\cdot r \stackrel{\eqref{simp.b}}{=} m \otimes t\cdot (n\cdot r) = m \otimes (t\cdot n)\cdot r = \rho_r(m,t\cdot n),
\end{aligned} 
\]
for all $m,m',m''\in M$, $n,n',n'' \in N,$ $t \in T$. That is to say, $\rho_r$ is a $T$-balanced bilinear map and hence it factors uniquely through $M \otimes_T N$ via the heap morphism
\[
\varrho_r : M \otimes_T N \lra M\otimes_T N, \qquad m\otimes n \lto m\otimes n\cdot r.
\]
Now, consider the assignment
\[
\varrho:R \lra E(M \otimes_T N), \qquad r \lto \varrho_r.
\]
For all $m\in M$, $n\in N$, $r,r',r''\in R$, 
\[
\begin{aligned}
\varrho_{[r,r',r'']}(m \otimes n) & \stackrel{\phantom{\eqref{simp.r}}}{=} m \otimes n\cdot [r,r',r''] \stackrel{\eqref{module2}}{=} m \otimes [n\cdot r,n\cdot r',n\cdot r''] \\
 & \stackrel{\eqref{simp.r}}{=}[m \otimes n\cdot r,m \otimes n\cdot r',m \otimes n\cdot r''] \\
 & \stackrel{\phantom{\eqref{simp.r}}}{=}  \left[\varrho_{r}(m \otimes n), \varrho_{r'}(m \otimes n), \varrho_{r''}(m \otimes n)\right] \\
 & \stackrel{\phantom{\eqref{simp.r}}}{=} \left[\varrho_{r}, \varrho_{r'}, \varrho_{r''}\right](m \otimes n), \\
\varrho_{rr'}(m \otimes n) & \stackrel{\phantom{\eqref{simp.r}}}{=} m \otimes n\cdot rr' = m \otimes (n\cdot r)\cdot r' = \varrho_{r'}(m \otimes (n\cdot r)) \\
 & \stackrel{\phantom{\eqref{simp.r}}}{=} (\varrho_{r'} \circ \varrho_r)(m \otimes n).
\end{aligned}
\]
Therefore, $\varrho: \op{R} \lra E(M \otimes_T N)$ is a morphism of trusses and hence $M \otimes_T N$ is a right $R$-module. If $R$ admits identity $1$ and $N$ is a unital $R$-module, then $\varrho(1) = \id_{M \otimes N}$ and hence $\varrho$ (and $M \otimes_T N$) is also unital.

The other case is proven in a symmetric way.

\item[\ref{tens:item2}]  Similarly to the proof of statement~\ref{tens:item1}, one considers the assignment
\[
\varphi' : M \times N \lra M'\otimes_T N, \qquad (m,n) \lto \varphi(m) \otimes n.
\]
Since $\varphi$ is a morphism of right $T$-modules, $\varphi'$ is a $T$-balanced bilinear map, and hence it factors uniquely through
\[
\varphi \otimes N : M \otimes_T N \lra M'\ot_T N, \qquad m\otimes n \lto \varphi(m) \otimes n.
\]
Since $\varphi\ot N$ acts trivially on the elements in $N$, and the $R$-actions on $M\ot_TN$ and $M'\ot_T N$ are defined using the $R$-action on $N$ only, the resulting map is a homomorphism of right $R$-modules. 
The other case is proven in a symmetric way.

\item[\ref{tens:item3}] This follows immediately from assertions \ref{tens:item1}  and \ref{tens:item2}. \hfill \qedhere
\end{description}
\end{proof}
 
 \begin{proposition}\label{prop.adj}
Let $T$, $S$ be trusses and let $M$ be a $T$-$S$-bimodule. Then $-\ot_TM :\Mod\ds T \lra \Mod\ds S$ is the left adjoint functor to the  functor $\lhom T M-$.
\end{proposition}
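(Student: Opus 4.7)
The proof is the standard hom--tensor adjunction argument, adapted to heaps. I would establish a natural isomorphism of heaps
\[
\rhom{S}{N\ot_T M}{Y} \;\cong\; \rhom{T}{N}{\rhom{S}{M}{Y}}
\]
for every right $T$-module $N$ and right $S$-module $Y$, and then check naturality in both arguments. (I read the Hom functor in the statement as $\rhom{S}{M}{-}:\rmod{S}\to\rmod{T}$, this being the one compatible with the adjunction structure: right $S$-linear maps out of $M$, equipped with the right $T$-action coming from the left $T$-action on the bimodule $M$.)

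The first step is to endow $\rhom{S}{M}{Y}$ with this right $T$-module structure: the heap operation is pointwise from $Y$, and for $t\in T$ and $f\in \rhom{S}{M}{Y}$ one sets $(f\cdot t)(m) := f(t\cdot m)$. Right $S$-linearity of $f\cdot t$, associativity $(f\cdot t)\cdot t' = f\cdot(tt')$, and distributivity over the pointwise heap operation all reduce to the bimodule axioms on $M$ and the module axioms on $Y$, and turn $\rhom{S}{M}{-}$ into a functor $\rmod{S}\to\rmod{T}$. The bijection itself is then obtained from Theorem~\ref{thm.tensor}. Given $g\in \rhom{T}{N}{\rhom{S}{M}{Y}}$, the map $(n,m)\mapsto g(n)(m)$ is bilinear by the pointwise heap structure on the Hom, and $T$-balanced because right $T$-linearity of $g$ gives $g(n\cdot t)(m)=(g(n)\cdot t)(m)=g(n)(t\cdot m)$. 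It therefore factors through a unique heap morphism $\bar g: N\ot_T M\to Y$ with $\bar g(n\ot m)=g(n)(m)$, and $\bar g$ is right $S$-linear thanks to the description of the $S$-action on simple tensors from Proposition~\ref{prop.tens.act}. Conversely, for $f:N\ot_T M\to Y$ right $S$-linear, the rule $\tilde f(n)(m):=f(n\ot m)$ defines a right $T$-linear map with values in $\rhom{S}{M}{Y}$, the two relevant linearities being, respectively, the $S$-action on simple tensors and the balancing relation $n\cdot t\ot m=n\ot t\cdot m$ in the tensor product. That the two constructions are mutually inverse is immediate on simple tensors and extends by uniqueness in the universal property.

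Naturality in $N$ and $Y$ reduces to a routine check on generators $n\ot m$. The only point that requires genuine care, and the main potential obstacle, is to arrange the right $T$-action on $\rhom{S}{M}{Y}$ so that it transports $T$-balancing on $N\times M$ into right $T$-linearity on $N$ (and vice versa); once this matching is set up correctly, everything else follows mechanically from the universal property of the tensor product and from the explicit description of the $S$-module structure on simple tensors given by Proposition~\ref{prop.tens.act}.
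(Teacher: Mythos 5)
Your proposal is correct and is essentially the paper's argument: the authors likewise invoke the standard hom--tensor adjunction from ring theory, noting only that distributivity over the ternary heap operation replaces additivity, and they record the unit $\eta_X(x) = [m \lto x \ot m]$ and counit $\eps_Y\big([f_i \ot m_i]_i\big) = [f_i(m_i)]_i$, which are exactly the unit/counit form of the hom-set bijection you construct. Your identification of the right $T$-action $(f\cdot t)(m) = f(t\cdot m)$ on $\rhom{S}{M}{Y}$ as the point where $T$-balancing is converted into $T$-linearity matches the paper's (implicit) setup.
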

\begin{proof}
The proof of this proposition follows  the same arguments as the proof of the corresponding statement for modules over rings. The only difference is that the distributivity of the tensor product over the heap ternary operation (rather than over a binary addition) should be employed whenever necessary (for example in showing that the unit and counit of the adjunction are morphisms of heaps). We only mention that the unit and the counit of the adjunction are explicitly given by
$$
\eta_X: X_T\lra \lhom S{M_S}{X\otimes_{T}M_{S}}, \qquad x\lto [m\lto x\ot m],
$$
$$
\eps_Y: \lhom S{M_S}{Y_S}\ot_T{M}_S\lto Y_S, \qquad \big[f_i\ot m_i\big]_{i=1}^{2n+1} \lto \big[f_i(m_i)\big]_{i=1}^{2n+1},
$$
for all right $T$-modules $X$ and  right $S$-modules $Y$.
\end{proof}

\begin{corollary}\label{cor:ass}
Let $R,S,T,U$ be trusses and let $A$ be an $R$-$S$-bimodule, $B$ be an $S$-$T$-bimodule and $C$ be a $T$-$U$-bimodule.  Then the map,
$$
\begin{aligned}
\alpha_{A,B,C}:  (A\otimes_R B)\otimes_T C& \longrightarrow A\otimes_R (B\otimes_T C),\\
(a\otimes b)\otimes c&\longmapsto a\otimes (b\otimes c).
\end{aligned}
$$
is an isomorphism of $R$-$U$-bimodules.
\end{corollary}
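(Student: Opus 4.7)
The plan is to construct $\alpha_{A,B,C}$ and its inverse by iterating the universal property of the tensor product stated in Definition~\ref{tensor} twice, in a way that mirrors the classical ring-theoretic proof, but with heap operations and the sub-heap relation of Lemma~\ref{lem:quomap} replacing the usual additive bookkeeping. First I would dispose of the degenerate case: if any one of $A$, $B$, $C$ is empty, then by the construction in Theorem~\ref{thm.tensor} both iterated tensor products are empty, and the empty map is trivially an isomorphism. So from now on assume $A$, $B$, $C$ are all non-empty.

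For the forward direction, I fix $c \in C$ and consider the assignment $A \times B \to A \otimes_R (B \otimes_T C)$, $(a,b) \mapsto a \otimes (b \otimes c)$. Using \eqref{simp.l}, \eqref{simp.r} and \eqref{simp.b} for the inner and outer tensors, together with the fact that $b \otimes c$ depends linearly in $b$, one checks that this map is bilinear and $R$-balanced. Definition~\ref{tensor} then yields a unique heap morphism $\beta_c : A \otimes_R B \to A \otimes_R (B \otimes_T C)$ with $\beta_c(a \otimes b) = a \otimes (b \otimes c)$, and by Proposition~\ref{prop.tens.act}\ref{tens:item1} this is in fact a morphism of right $T$-modules when the $T$-action on the target is via $B$. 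Next I consider $(A \otimes_R B) \times C \to A \otimes_R (B \otimes_T C)$, $(x,c) \mapsto \beta_c(x)$. The left linearity in $x$ is built in; for right linearity in $c$ and the $T$-balancing condition I check the identities on simple tensors, where they reduce to \eqref{simp.r} and \eqref{simp.b} inside $B \otimes_T C$. Invoking Definition~\ref{tensor} a second time produces $\alpha_{A,B,C}$.

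For the inverse, the same strategy is applied symmetrically: for fixed $a\in A$, the map $B \times C \to (A \otimes_R B)\otimes_T C$, $(b,c) \mapsto (a\otimes b)\otimes c$, is $T$-balanced bilinear and produces $\gamma_a : B \otimes_T C \to (A \otimes_R B) \otimes_T C$; then $(a,y) \mapsto \gamma_a(y)$ is $R$-balanced bilinear on $A \times (B \otimes_T C)$ and factors to $\alpha_{A,B,C}^{-1} : A \otimes_R (B \otimes_T C) \to (A \otimes_R B) \otimes_T C$. That $\alpha_{A,B,C}$ and $\alpha_{A,B,C}^{-1}$ are mutually inverse is immediate on simple tensors, and since simple tensors generate each side as an abelian heap (by the explicit description in Theorem~\ref{thm.tensor}), heap-homomorphism equalities on simple tensors extend to the whole tensor product. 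The $R$-$U$-bimodule compatibility of $\alpha_{A,B,C}$ follows at once from Proposition~\ref{prop.tens.act}, because in both cases the $R$-action lives on the leftmost factor $A$ and the $U$-action on the rightmost factor $C$, and $\alpha_{A,B,C}$ fixes those factors.

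The only point that requires genuine care, rather than a routine mimicking of the ring case, is the verification of the three balancing identities at each of the two steps: without an additive neutral element at our disposal, each identity must be checked in the heap-theoretic form \eqref{simp.l}--\eqref{simp.b}, and one has to be mindful that the outer map $(x,c) \mapsto \beta_c(x)$ is linear in $x$ tautologically but linear in $c$ only via a computation that mixes the definition of $\beta_c$ with \eqref{simp.r} applied inside $B \otimes_T C$. Once these verifications are in hand, the existence and uniqueness clauses in Definition~\ref{tensor}, together with Lemma~\ref{lem:quomap} for well-definedness on quotients, do all the remaining work.
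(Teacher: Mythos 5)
Your proof is correct, but it takes a genuinely different route from the paper's. The paper disposes of associativity in one line by appealing to Proposition~\ref{prop.adj}: since $-\otimes_S B\dashv \lhom{T}{B}{-}$ and $-\otimes_T C\dashv \lhom{U}{C}{-}$, the two iterated tensor functors are left adjoints of naturally isomorphic hom-functors, and uniqueness of adjoints yields the isomorphism ``by standard arguments.'' You instead run the classical element-wise construction: two nested applications of the universal property of Definition~\ref{tensor} to build $\alpha_{A,B,C}$, the symmetric construction for its inverse, and a check on simple tensors (which generate each side as an abelian heap). Both are sound; your version is longer but produces the explicit formula $(a\otimes b)\otimes c\mapsto a\otimes(b\otimes c)$ and the bimodule compatibility directly, whereas the adjunction argument is shorter but requires unwinding units and counits to see that the induced isomorphism is this particular map. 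You correctly isolate the one genuinely non-routine point, namely that linearity of $\beta_c(x)$ in $c$ and the $T$-balancing must be verified as equalities of heap homomorphisms agreeing on generators, since a general $x\in A\otimes B$ is a bracket of simple tensors. One cosmetic remark: the subscript on the inner tensor product should be $S$ rather than $R$ (the balancing there is over the truss acting on the right of $A$ and the left of $B$); this slip is in the paper's own statement and you have merely inherited it, but when you write ``bilinear and $R$-balanced'' for the map on $A\times B$ you do mean $S$-balanced.
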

\begin{proof}
The assertion follows from Proposition~\ref{prop.adj} by standard arguments. 
\end{proof}

In view of the associativity of tensor products stemming from Corollary~\ref{cor:ass} we no longer need to write brackets in-between multiple tensor products.

The distributive laws for a truss $T$ mean that the multiplication map $\mu: T\times T\lra T$, $(s,t)\lto st$  is bilinear. Hence, there is a unique heap homomorphism $\hat{\mu}: T\ot T\lra T$. The associative law for $\mu$ is then reflected by the commutativity of the following diagram:
\begin{equation}\label{assoc.truss.tens}
\begin{gathered}
\xymatrix @R=20pt{T\ot T\ot T\ar[rr]^-{\hat{\mu}\ot T} \ar[d]_-{T\ot \hat{\mu}} && T\ot T\ar[d]^{\hat{\mu}}\\
T\ot T\ar[rr]^{\hat{\mu}} && T.}
\end{gathered}
\end{equation}
The existence of a map $\hat{\mu}$ satisfying \eqref{assoc.truss.tens} can be taken as the definition of the truss, provided that one carefully explains the meaning of $\tens$ (for example, by resorting to relations \eqref{simp.l} and \eqref{simp.r}) without referring to trusses, in order to avoid the {\em ignotum per ignotius} trap.

Similarly, if $T$ is a truss and $M$ a left $T$-module with action $\lambda_M$, then conditions \eqref{module2} and \eqref{module3} mean that $\lambda_M : T\times M\lra M$ is a bilinear map, so it induces a unique map $\hat{\lambda}_M : T\ot M\lra M$. Thus, exactly as in the case of modules over rings, a left module over a truss $T$ can be equivalently defined as an abelian heap $M$ together with a heap homomorphism  $\hat{\lambda}_M : T\ot M\lra M$ such that the diagram
$$
\xymatrix @R=20pt{T\ot T\ot M\ar[rr]^-{\hat{\mu}\ot T} \ar[d]_-{T\ot \hat{\lambda}_M} && T\ot M\ar[d]^{\hat{\lambda}_M}\\
T\ot M\ar[rr]^{\hat{\lambda}_M} && M}
$$
commutes, where $\hat{\mu}$ is the multiplication in $T$.  In a similar way, a right $T$-module can be equivalently described as a heap $M$ together with an associative right action $\hat{\varrho}_M: M\otimes T\lra M$. Taking these equivalent definitions of modules into account, one can interpret the tensor product as a coequalizer.

\begin{proposition}\label{prop.tensor.coeq}
Let $T$ be a truss. For a  right $T$-module $M$ and left $T$-module $N$, the tensor product $M\ot_T N$ is the coequalizer of  the following diagram of abelian heaps
\begin{equation}\label{eq:tenscoeq}
\xymatrix @C=40pt{M\ot T\ot N\ar@<.6ex>[rr]^{\hat{\varrho}_M\ot N}\ar@<-.6ex>[rr]_{M\ot \hat{\lambda}_N} && M\ot N,}
\end{equation}
where $\hat{\varrho}_M$ and $\hat{\lambda}_N$ are the corresponding actions.
\end{proposition}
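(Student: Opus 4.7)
The plan is to verify that $M\ot_T N$ together with the canonical heap homomorphism $\pi: M\ot N \lra M\ot_T N$ (induced via the universal property of the heap tensor product $\ot = \ot_\star$ by the $T$-balanced bilinear map $\varphi: M\times N \lra M\ot_T N$ of Definition~\ref{tensor}, which in particular is bilinear) enjoys the universal property of the coequalizer of \eqref{eq:tenscoeq}. Once this is done, uniqueness of coequalizers in $\ahrd$ will conclude the proof. The empty cases (when $M$ or $N$ is empty) are immediate, since then $M\ot N$, $M\ot T\ot N$ and $M\ot_T N$ are all empty, so we focus on the non-empty situation.

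First I would check that $\pi$ coequalizes the pair $(\hat{\varrho}_M\ot N, M\ot \hat{\lambda}_N)$. On a simple generator $m\ot t\ot n$ of $M\ot T\ot N$ (the associativity of the heap tensor product being a specialisation of Corollary~\ref{cor:ass} to $R=S=T=U=\star$), one has
\[
\pi\bigl((\hat{\varrho}_M\ot N)(m\ot t\ot n)\bigr) = (m\cdot t)\ot_T n \stackrel{\eqref{simp.b}}{=} m\ot_T (t\cdot n) = \pi\bigl((M\ot \hat{\lambda}_N)(m\ot t\ot n)\bigr),
\]
so equality of the two composites on a generating family of simple tensors, together with heap linearity, yields equality on the whole of $M\ot T\ot N$.

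Next I would verify the universal property. Let $f: M\ot N \lra H$ be a heap homomorphism with $f\circ(\hat{\varrho}_M\ot N) = f\circ(M\ot \hat{\lambda}_N)$. By the universal property of $\ot$, $f$ corresponds to a unique bilinear map $\tilde f: M\times N \lra H$ with $f(m\ot n) = \tilde f(m,n)$. The coequalizer hypothesis, evaluated on the simple tensor $m\ot t\ot n$, reads
\[
\tilde f(m\cdot t, n) = f\bigl((m\cdot t)\ot n\bigr) = f\bigl(m\ot (t\cdot n)\bigr) = \tilde f(m, t\cdot n),
\]
that is, $\tilde f$ is $T$-balanced. The universal property of $M\ot_T N$ then produces a unique heap morphism $\hat f: M\ot_T N \lra H$ with $\hat f\circ \varphi = \tilde f$, equivalently $\hat f\circ \pi = f$. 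Uniqueness of such $\hat f$ among factorisations through $\pi$ follows from surjectivity of $\pi$ (or, equivalently, from the uniqueness clause in the universal property of $M\ot_T N$ combined with that of $M\ot N$).

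There is no genuine obstacle here beyond bookkeeping: the whole argument is a transcription, via the universal property of the heap tensor product, of the tautology that \textbf{a bilinear map is $T$-balanced if and only if it coequalises the two $T$-actions}. The only point that deserves attention is checking that the two parallel arrows in \eqref{eq:tenscoeq} are well defined as heap homomorphisms out of the triple heap tensor product, which amounts to invoking Proposition~\ref{prop.tens.act}\ref{tens:item2} and the functoriality of $\ot$ established in \S\ref{ssec.functorial}.
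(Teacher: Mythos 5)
Your proposal is correct and follows essentially the same route as the paper: factor the structure map $\varphi$ through $M\ot N$ to obtain the coequalizing morphism, verify the fork on simple tensors via \eqref{simp.b}, and for the universal property precompose a coequalizing $f$ with $M\times N\to M\ot N$ to get a $T$-balanced bilinear map that factors through $M\ot_T N$. The paper phrases the final uniqueness via the uniqueness clause of the universal property of $\ot$ rather than surjectivity, but this is a cosmetic difference.
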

\begin{proof}
Consider the structural morphisms $\phi: M\times N \lra M \ot N$ and $\varphi: M \times N \lra M \ot_T N$, part of the tensor product data. By definition, $\varphi$ is a bilinear map and so it factors uniquely through the morphism of abelian heaps
\[
\tilde{\varphi} : M \ot N \lra M \ot_T N,
\]
such that $\tilde{\varphi} \circ \phi = \varphi$. In addition, $\tilde{\varphi}$ satisfies
\[
\begin{aligned}
\tilde{\varphi}\left(\left(\hat{\varrho}_M \ot N\right)\left(m \ot t \ot n\right)\right) &= \tilde{\varphi}\left(m \cdot t \ot n\right) = \tilde{\varphi}\left(\phi\left(m \cdot t, n\right)\right) = \varphi(m \cdot t,n) \\
&= \varphi(m,t\cdot n) = \tilde{\varphi}\left(\left(M \ot \hat{\lambda}_N\right)\left(m \ot t \ot n\right)\right),
\end{aligned}
\]
for all $m \in M$, $n \in N$, $t\in T$, because $\varphi$ is $T$-balanced. Since every morphism involved is a morphism of abelian heaps, we conclude that $\tilde{\varphi}$ coequalizes the pair \eqref{eq:tenscoeq}. Now, let $(Q,q: M\ot N \to Q)$ be a pair coequalizing \eqref{eq:tenscoeq} as well. The composition $q \circ \phi$ is bilinear because
\[
\begin{aligned}
(q \circ \phi)\left(\left[m,m',m''\right],n\right) &= q\left(\left[m,m',m''\right]\ot n\right) \stackrel{\eqref{simp.l}}{=} q\left(\left[m\ot n,m'\ot n,m''\ot n\right]\right) \\ 
 &= \left[q\left(m\ot n\right),q\left(m'\ot n\right),q\left(m''\ot n\right)\right]\\
 & = \left[(q \circ \phi)\left(m, n\right),(q \circ \phi)\left(m', n\right),(q \circ \phi)\left(m'', n\right)\right] ,
\end{aligned}
\]
for all $m,m',m'' \in M$, $n \in N$, and analogously on the other side. Furthermore, it is also $T$-balanced because $q$ coequalizes \eqref{eq:tenscoeq}, and hence
\[
\begin{aligned}
(q \circ \phi)(m \cdot t,n) &= q(m\cdot t \ot n) = q\left(\left(\hat{\varrho}_M \ot N\right)\left(m \ot t \ot n\right)\right) \\
&= q\left(\left(M \ot \hat{\lambda}_N\right)\left(m \ot t \ot n\right)\right) = (q \circ \phi)(m,t \cdot n),
\end{aligned}
\]
for all $m \in M$, $n \in N$, $t \in T$. Thus, there exists a unique morphism of abelian heaps $\tilde{q} : M \ot _T N \to Q$ such that $\tilde{q}\circ \varphi = q \circ \phi$. In particular, $\tilde{q} \circ \tilde{\varphi} \circ \phi = q \circ \phi$ (by definition of $\tilde{\varphi}$) and since both $q$ and $\tilde{q} \circ \tilde{\varphi}$ are heap homomorphisms, the uniqueness part of the universal property of the tensor product entails that $\tilde{q} \circ \tilde{\varphi} = q$. Summing up, the pair $\left(M \ot _T N, \tilde{\varphi}\right)$ is the coequalizer of \eqref{eq:tenscoeq} in $\ahrd$.
\end{proof}

Corollary~\ref{cor:ass} and the above discussion can be formalized as the following result.
\begin{proposition}\label{prop:monoidal}
{~}
\begin{enumerate}[label=(\arabic*),ref=\emph{(\arabic*)},leftmargin=0.8cm]
\item\label{monoidal:1} The category $\ahrd$ with the tensor product of heaps as the operation and the singleton heap $\star$ as the unit object is a closed monoidal category. Unitors are projections, with inverses given by tensoring by $*$,
$$
A\otimes \star \lra A,\quad a\ot *\lto a, \qquad \star\otimes A \lra A,\quad  *\ot a\lto a.
$$
\item\label{monoidal:2}  For any truss $T$, the category $T\ds\Mod\ds T$ with the tensor product of $T$-modules and the truss $\uT$ as unit object is a closed monoidal category. Unitors are the actions with inverses given by insertion of identity as in Proposition~\ref{prop:tensunital}
\[
\begin{gathered}
M\ot_T\uT\lra M,  \quad m\otimes z\lto m\cdot z, \qquad \uT\ot_T M\lra M, \quad z\otimes m\lto z\cdot m,\\
M\lra M\ot_T \uT, \quad m\lto m\ot *, 					 \qquad M\lra \uT\ot_TM,  \quad m\lto *\ot m.
\end{gathered}
\]
In particular, for any unital truss $T$ the category $T_{1}\ds\Mod\ds T_{1}$ with the tensor product of $T$-modules and the truss $T$ as a unit object is a closed monoidal category. 
\item\label{monoidal:3}  A truss is a semigroup in the category $\ahrd$ and a unital truss is a monoid in $\ahrd$.  Conversely, any monoid in $\ahrd$ is a unital truss and any semigroup in $\ahrd$ is a truss.
\end{enumerate}
\end{proposition}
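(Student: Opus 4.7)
The overall strategy is to address the three parts in sequence, relying at each stage on the universal property of the tensor product to reduce coherence checks to identities on simple tensors. Closedness in both (1) and (2) is immediate from Proposition~\ref{prop.adj}, so the real work lies in producing well-defined associators and unitors and verifying the pentagon and triangle axioms.

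For part \ref{monoidal:1}, the associator is the isomorphism $\alpha_{A,B,C}$ of Corollary~\ref{cor:ass} specialised to $R=S=T=U=\star$. For the right unitor $\rho_A:A\otimes\star\to A$, I would note that the projection $(a,*)\mapsto a$ is bilinear (condition \eqref{lin.r} is vacuous because $\star$ is a singleton, and \eqref{lin.l} is tautological), so it factors through $A\otimes\star$ via the universal property. The inverse $a\mapsto a\otimes *$ is a heap morphism directly from \eqref{simp.l}, and mutual inverseness is checked on simple tensors, hence holds everywhere by the uniqueness clause of the universal property. The left unitor $\lambda_A$ is symmetric. Naturality of $\alpha, \rho, \lambda$, as well as the pentagon and triangle axioms, are equalities of heap maps out of tensor powers; since the targets agree on all simple tensors, uniqueness of the factoring morphism forces them to agree. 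Closedness is Proposition~\ref{prop.adj} with $T=S=\star$.

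For part \ref{monoidal:2}, the same blueprint applies over a base truss $T$. Tensoring equips $T\ds\Mod\ds T$ with a bifunctor by Proposition~\ref{prop.tens.act}, associativity is Corollary~\ref{cor:ass}, and the unitors are precisely the maps shown to be isomorphisms in Proposition~\ref{prop:tensunital}; these are automatically $T$-bimodule morphisms, since they are literally the structure action and its formal inverse through $\iota_T$. The pentagon and triangle axioms are again verified on simple tensors, using Proposition~\ref{prop:tensunital} to collapse $*$ on either side. For the unital case, Theorem~\ref{thm:unital} identifies $T_1\ds\Mod\ds T_1$ with $T\ds\Mod\ds T$ and sends $\uT$ to $T$, so the monoidal structure restricts cleanly. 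Closedness is Proposition~\ref{prop.adj}.

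For part \ref{monoidal:3}, the identification is essentially a definition unpacking. A semigroup $(T,\mu\colon T\otimes T\to T)$ in $\ahrd$ is, by the bilinear/heap-morphism correspondence built into the tensor product, the data of a binary operation on the heap $T$ which distributes over $[-,-,-]$ on both sides, exactly the distributivity \eqref{truss.dist}; the equation $\mu\circ(\mu\otimes T)=\mu\circ(T\otimes\mu)$ is, upon evaluating on simple tensors, ordinary associativity — precisely a truss as in diagram \eqref{assoc.truss.tens}. A monoid adds a unit $\eta\colon\star\to T$, i.e.\ a distinguished element $1\in T$, and the unit triangles, transported through the unitors from \ref{monoidal:1}, read $1\cdot t=t=t\cdot 1$, yielding a unital truss. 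Conversely, any (unital) truss produces such (a monoid) semigroup. I expect the main obstacle to be the bookkeeping in part \ref{monoidal:2}: one must check that the unitors are genuinely bimodule morphisms and that they cohere with the associator from Corollary~\ref{cor:ass}, but both difficulties dissolve because every map in sight arises from a bilinear or $T$-balanced map and is therefore determined by its values on simple tensors.
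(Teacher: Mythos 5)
Your proposal is correct and follows essentially the same route as the paper, which itself only remarks that parts (1) and (2) are ``analogous to the case of rings'' and that part (3) follows from the discussion preceding the proposition together with the observation that the unit of the monoid is $\eta:\star\to T$, $*\mapsto 1$. You simply make explicit the ingredients the paper has already assembled for this purpose (Corollary~\ref{cor:ass} for the associator, Proposition~\ref{prop:tensunital} for the unitors, Proposition~\ref{prop.adj} for closedness, and verification of coherence on simple tensors), so no further comparison is needed.
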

\begin{proof}
Proofs of \ref{monoidal:1} and \ref{monoidal:2} are analogous to the case of rings. 
Statement \ref{monoidal:3} follows by the discussion preceding the previous proposition, supplemented by the observation that if $T$ is a unital truss, then the unit of the corresponding monoid in $\ahrd$ is given by the map $\eta: \star \lra T$, $*\lto 1$, picking the identity.
\end{proof}

\begin{remark}\label{rem.cos}
In light of \cite[Proposition 9.1.6 and Theorem 9.3.8]{Ber:inv} the category $\ahrd$ of abelian heaps is a complete and cocomplete category. In light of Proposition~\ref{prop:monoidal}\ref{monoidal:1}, $(\ahrd,\otimes,\star)$ is a monoidal category. It can be easily checked that the switch map $\sigma: H\otimes H'\lra H'\otimes H, \, h\otimes h'\lto h'\otimes h,$ is well-defined and makes of $\ahrd$ a symmetric monoidal category. Finally, either because $\ahrd = \pMod{\star_1}$ and $\otimes=\otimes_\star$ or because 
\[
\xymatrix @R=0pt{
{\ahrd}\left({H\otimes H'},{H''}\right) \ar@{<->}[r] & {\ahrd}\left({H},{{\ahrd}\left({H'},{H''}\right)}\right), \\
f \ar@{|->}[r] & \left[h\lto \left[h'\lto f(h\otimes h')\right]\right], \\
\left[h\otimes h'\lto g(h)(h')\right] & g ,\ar@{|->}[l] 
}
\]
is a well-defined bijection, $\ahrd$ is a complete and cocomplete closed symmetric monoidal category, whence a \emph{cosmos} in the sense of J. B\'enabou (as reported in \cite[Introduction]{Street}).

 Let $T$ be a truss. It follows from what we observed in \S\ref{sec.truss} and from the fact that the composition $\lhom{T}{N}{P} \times \lhom{T}{M}{N} \to \lhom{T}{M}{P}$ is bilinear that $\lmod{T}$ is an $\left(\ahrd,\otimes,\star\right)$-enriched category.
\end{remark}

\begin{remark}
Since a truss is a semigroup object in a symmetric monoidal category with braiding given by the switch map, we can define the tensor product of trusses in the standard way. That is, if $S$ and $T$ are trusses with multiplications $\hat{\mu}_S$ and $\hat{\mu}_T$, respectively, then $S\otimes T$ is a truss with the multiplication:
$$
\hat{\mu}_{S\ot T}: \xymatrix{S\otimes T\otimes S\otimes T\ar[rr]^-{S\ot \sigma_{T,S}\ot T}&& S\otimes S\otimes T\otimes T\ar[rr]^-{\hat{\mu}_S\ot \hat{\mu}_S} && S\ot T.}
$$
If $S$ and $T$ are unital then also $S\ot T$ is a unital truss. Thus, similarly to the case of modules over rings, any $S$-$T$-bimodule $M$ can be understood  as a left $S\ot T^\circ$-module, where $T^\circ$ is the truss opposite to $T$. 
\end{remark}

We know from \cite[Proposition 9.1.6 and Theorem 9.3.8]{Ber:inv} that the category of modules over a truss is complete and cocomplete. We conclude this subsection with an independent argument that allows us to draw the same conclusion and, at the same time, that shows us a way to explicitly compute them, provided we know what they look like in $\ahrd$. As a by-product we provide an abstract description of free modules over a non-unital truss.

Let $T$ be a truss and $\uT$ be its unital extension as in \S\ref{ssec.unital}. By \cite[Chapter VII, Section 4]{MacLane}, the functor $\uT \otimes -: \ahrd \lra \lmod{(\uT)_1}$ is left adjoint to the forgetful functor $U':\lmod{(\uT)_1} \lra \ahrd$ (and hence it is called the \emph{free unital $\uT$-module functor}). Since the functor $\Uu:\lmod{T} \lra \lmod{(\uT)_1}$ of Theorem \ref{thm:unital} is the inverse of the restriction of scalars $\iota_T^*: \lmod{(\uT)_1} \lra \lmod{T}$ and since clearly $U'\circ \Uu$ coincides with the forgetful functor $U:\lmod{T} \lra \ahrd$,  the composition $\iota_T^*\circ (\uT \ot -)$ is left adjoint to $U$. Once observed that $\iota_T^*\circ (\uT \ot -)$ is naturally isomorphic to tensoring by the left $T$-module $\iota_T^*(\uT)$, we conclude that the functor $\iota_T^*(\uT) \ot - : \ahrd \lra \lmod{T}$ is left adjoint to the forgetful functor $U$ and hence it is called the \emph{free $T$-module functor}. 

As a consequence of the existence of the adjunction $(\iota_T^*(\uT )\otimes -)  \dashv U$, we can consider the monad $\TT \coloneqq U(\iota_T^*(\uT )\otimes -) = U(\iota_T^*(\uT)) \ot - : \ahrd \lra \ahrd$. In view of the fact that $\uT$ is a monoid in $\ahrd$, the Eilenberg-Moore category of algebras for the monad $U'(\uT\ot -)$ on $\ahrd$ is exactly the category of unital $\uT$-modules. Since $U'(\uT\ot -) = U'\Uu \iota_T^*(\uT\ot -) = \TT$, the Eilenberg-Moore category $\mathsf{EM}^\TT$ is exactly the category of modules over $T$.

\begin{proposition}\label{prop:colimits}
For a truss $T$, the forgetful functor $U: \lmod{T} \lra \ahrd$ creates and preserves all limits and all colimits that exist in $\ahrd$. That is to say, if a functor $D : \cC \to \lmod{T}$ is such that $U\circ D$ has a (co)limit $H$ in $\ahrd$, then $D$ has a (co)limit $\widehat{H}$ in $\lmod{T}$ and $U(\widehat{H}) = H$. In particular, $\lmod{T}$ is complete and cocomplete.
\end{proposition}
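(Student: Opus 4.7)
The plan is to leverage the monadic presentation $\lmod{T} \cong \mathsf{EM}^\TT$ recalled in the paragraph preceding the statement, under which $U$ corresponds to the Eilenberg--Moore forgetful functor $U^\TT$ with $\TT = U(\iota_T^*(\uT)) \otimes - : \ahrd \to \ahrd$. Once this identification is in hand, both creation statements follow from classical facts about Eilenberg--Moore forgetful functors, applied to a monad whose underlying endofunctor turns out to be a left adjoint.

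For limits, no assumption on $\TT$ is needed. Given $D: \cC \to \lmod{T}$ such that $U \circ D$ admits a limit $(H, (\pi_c)_{c\in\cC})$ in $\ahrd$, I would endow $H$ with the unique $\TT$-algebra structure $\alpha: \TT(H) \to H$ characterised by $\pi_c \circ \alpha = \alpha_c \circ \TT(\pi_c)$ for every $c \in \cC$, where $\alpha_c$ is the structure of $D(c)$; this is well-defined because $(\alpha_c \circ \TT(\pi_c))_{c\in\cC}$ assembles into a cone on $U \circ D$. The unit and associativity axioms for $\alpha$, together with the universality of the resulting cone on $D$ in $\lmod{T}$, both follow componentwise from the universality of $H$.

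For colimits, the crucial extra input is that the endofunctor $\TT$ is itself a left adjoint on $\ahrd$: indeed $U(\iota_T^*(\uT))$ is a fixed abelian heap and tensoring with a fixed heap admits a right adjoint given by the hom-heap functor, which is the specialization of Proposition~\ref{prop.adj} to the terminal truss $\star$ (recall that $\ahrd = \pMod{\star_1}$ and $\otimes = \otimes_\star$, as noted in Remark~\ref{rem.cos}). Hence $\TT$, and consequently $\TT^2$, preserves every colimit that exists in $\ahrd$. By the standard creation-of-colimits theorem for Eilenberg--Moore categories, $U^\TT$ creates every colimit that is preserved by both $\TT$ and $\TT^2$, hence every colimit that exists in $\ahrd$. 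Combined with the completeness and cocompleteness of $\ahrd$ recorded in Remark~\ref{rem.cos}, this yields completeness and cocompleteness of $\lmod{T}$.

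The main obstacle is not really mathematical but organisational: making explicit the identification $\lmod{T} \cong \mathsf{EM}^\TT$ matching $U$ with $U^\TT$, which is asserted rather than proved in the preamble. One can justify it directly by observing that, via the tensor--hom adjunction in $\ahrd$, a $\TT$-algebra structure $\iota_T^*(\uT) \otimes H \to H$ corresponds to a heap morphism $\iota_T^*(\uT) \to E(H)$ that is compatible with the monoid structure of $\uT$ in $(\ahrd,\otimes,\star)$, and this is precisely the datum of a (unital $\uT$-, equivalently $T$-)module structure on $H$; once this is pinned down, every remaining step is purely formal monad-theoretic manipulation.
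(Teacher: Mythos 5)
Your proposal is correct and follows essentially the same route as the paper: the paper identifies $U$ with the Eilenberg--Moore forgetful functor of the monad $\TT = U(\iota_T^*(\uT))\otimes -$, observes that this endofunctor is a left adjoint (hence cocontinuous), and then invokes \cite[Propositions 4.3.1 and 4.3.2]{Borceaux} --- precisely the creation-of-limits and creation-of-$\TT$-and-$\TT^2$-preserved-colimits statements you sketch by hand. The only difference is that you spell out the algebra structure on the limit and the justification of $\lmod{T}\cong\mathsf{EM}^\TT$, which the paper leaves to the cited reference and to its preamble, respectively.
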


\begin{proof}
Since $U(\iota_T^*(\uT)\otimes -)$ coincides with $U(\iota_T^*(\uT))\otimes - :\ahrd\lra\ahrd$ and since it is left adjoint to ${\ahrd}(U(\iota_T^*(\uT)),-)$, it preserves all colimits. Therefore, the statement follows from \cite[Propositions 4.3.1 and 4.3.2]{Borceaux} and (co)completeness of $\ahrd$.
\end{proof}

Proposition \ref{prop:colimits} amounts to say that (co)limits of $T$-modules can be obtained (up to isomorphism) by endowing the corresponding (co)limits of abelian heaps with a suitable $T$-action, as it happens with the product, for instance.

\begin{proposition}\label{prop:free}
Let $X$ be a non-empty set and $T$ be a unital truss. Denote by $\Aa(X)$ the free abelian heap over $X$ and by $\Tt^X$ the free unital $T$-module over $X$. Then $\Tt^X\cong T \otimes \Aa(X)$ as $T$-modules. In particular, the following diagram of functors commutes
\[
\xymatrix @C=30pt{
\lmod{T_1} \ar@<+0.5ex>[dr]^{U'} \ar@<+0.5ex>[dd]^{\Ff'} & \\
 & \ahrd\ . \ar@<+0.5ex>[dl]^{\Ff''} \ar@<+0.5ex>[ul]^-{T \otimes-} \\
\Set \ar@<+0.5ex>[uu]^{\Tt} \ar@<+0.5ex>[ur]^{\Aa} & 
}
\]
\end{proposition}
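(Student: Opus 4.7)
The strategy is to deduce both claims — the isomorphism $\Tt^X \cong T \otimes \Aa(X)$ and the commutativity of the diagram — from the uniqueness of left adjoints. The diagram consists of three pairs of adjoint functors forming a triangle, and the commutativity of the left-adjoint sides will follow at once from the obvious commutativity of the right-adjoint sides.

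First, the ``forgetful'' side is strict: $\Ff' = \Ff''\circ U'$ on the nose, since forgetting the $T$-action and then the heap structure yields the same underlying set as forgetting everything at once.

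Next, I would check that each pair of opposite arrows in the diagram forms an adjunction. The adjunction $\Tt \dashv \Ff'$ is by the very definition of $\Tt^X$ as the free unital $T$-module on $X$ given in Section~\ref{ssec.(co)prod}, and $\Aa \dashv \Ff''$ is the defining universal property of the free abelian heap. For $T \otimes -\;\dashv\; U'$, the argument given in the paragraphs preceding the statement for $\uT$ applies directly to $T$ itself: by Proposition~\ref{prop:monoidal}\ref{monoidal:3} the unital truss $T$ is a monoid in the monoidal category $(\ahrd,\otimes,\star)$, so the category of unital left $T$-modules is exactly the Eilenberg--Moore category of the monad $T\otimes-:\ahrd\to\ahrd$, and the free--forgetful adjunction of that monad is precisely $T\otimes - \;\dashv\; U'$.

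Composing the two adjunctions $\Aa \dashv \Ff''$ and $T\otimes - \dashv U'$ yields $(T\otimes -)\circ \Aa \;\dashv\; \Ff''\circ U' = \Ff'$. Since $\Tt$ is also a left adjoint of $\Ff'$, uniqueness of left adjoints up to natural isomorphism produces a natural isomorphism $\Tt \cong (T\otimes -)\circ \Aa$. Evaluating at a non-empty set $X$ gives the desired isomorphism $\Tt^X \cong T\otimes \Aa(X)$ of unital $T$-modules, and together with $\Ff'=\Ff''\circ U'$ this is exactly the commutativity of the diagram. I do not anticipate a real obstacle: all the ingredients are already available, and the only (minor) point requiring verification is that the Eilenberg--Moore argument for the non-unital case preceding the statement carries over verbatim to the unital case, which it does because for unital $T$ one may replace $\iota_T:T\to\uT$ by $\id_T$ throughout.
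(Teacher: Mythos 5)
Your proof is correct, but it takes a genuinely different route from the paper's. The paper argues at the level of objects: it fixes $e\in X$, decomposes the free abelian heap as a coproduct of singletons, $\Aa(X)\cong \boks{x\in X}{}{\Aa(\{x\})}$ (citing the explicit description of $\Aa(X)$ from the earlier work on direct sums), then uses cocontinuity of $T\otimes -$ to commute it past the coproduct and identifies $T\otimes\Aa(\{x\})\cong T\otimes\star\cong Tx$, arriving at $T\otimes\Aa(X)\cong\boks{x\in X}{}{Tx}=\Tt^X$. You instead work entirely at the level of adjunctions: the right-adjoint sides of the triangle compose strictly ($\Ff'=\Ff''\circ U'$), each edge is a free--forgetful adjunction ($\Tt\dashv\Ff'$ by the freeness of $\Tt^X$ from \S\ref{ssec.(co)prod}, $\Aa\dashv\Ff''$, and $T\otimes-\dashv U'$ since a unital truss is a monoid in $(\ahrd,\otimes,\star)$, exactly as the paper argues for $\uT$ in the paragraph preceding the statement), so uniqueness of left adjoints gives $\Tt\cong(T\otimes-)\circ\Aa$. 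Both arguments are sound; your version is shorter, gives the naturality in $X$ (hence the commutativity of the diagram of functors) for free rather than having to observe that the coproduct decomposition is independent of the chosen $e\in X$, and does not even need the non-emptiness of $X$. What the paper's computation buys in exchange is an explicit, element-level description of the isomorphism, which is precisely what is exploited in the example immediately following the proposition.
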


\begin{proof}
Fix $e\in X$. In view of \cite[Remark 3.5 and Proposition 3.9]{BrzRyb:mod} we know that 
\[
\Aa(X)\cong\hH\left(\bigoplus_{x\in X\setminus\{e\}}\ZZ\right) \cong \boks{x\in X}{}{\Aa(\{x\})},
\]
and the isomorphism $\Aa(X)\cong \boks{x\in X}{}{\Aa(\{x\})}$ is independent from the choice of $e\in X$. Now, since $T\otimes -$ is cocontinuous (because it is the left adjoint functor of the forgetful functor), we have the following chain of isomorphisms of left $T$-modules
\[
T \otimes \Aa(X) \cong T \otimes \left(\boks{x\in X}{}{\Aa(\{x\})}\right) \cong \boks{x\in X}{}{\left(T \otimes \Aa(\{x\})\right)}.
\]
Consider $\Aa(\{x\})$. As a set, $\Aa(\{x\})=\{x\}$ with the ternary operation $[x,x,x]=x$. This makes it clear that $\star \lra \Aa(\{x\}), *\lto x$, is an isomorphism of (abelian) heaps. Therefore, $T \otimes \Aa(X) \cong \boks{x\in X}{}Tx \cong \Tt^X$.
\end{proof}

Let us make explicit the foregoing isomorphism in an extremely easy example.

\begin{example}
Let $X = \{a,b\}$ be a set with two elements. The free abelian heap $\Aa(X)$ on $X$ can be realized as the set
\[\{a,b,aba,bab,ababa,babab,abababa,bababab,\ldots\}\]
with bracket given by concatenation and (symmetric) pruning. Then, for instance,
\[
t \ot ababa \longleftrightarrow (ta)(tb)(ta)(tb)(ta) = ([t,1,t,1,t]a)([t,1,t]b)(1a)(1b)(1a).
\]
\end{example}

\begin{corollary}[of Proposition \ref{prop:free}]
Let $T$ be a truss and $\uT$ its unital extension. Denote by $\Ff : \lmod{T} \lra \Set$ the forgetful functor. In the following diagram of adjunctions, the subdiagram involving only the right adjoints is commutative
\[
\xymatrix @!0 @C=50pt @R=35pt{
 & \lmod{(\uT)_1} \ar@<+0.4ex>[dl]^(0.4){\iota_T^*} \ar@<+0.4ex>[ddrr]^-{U'} & & \\
\lmod{T} \ar@<+0.4ex>[ur]^(0.4){\Uu} \ar@<+0.4ex>[ddrr]^(0.7){\Ff} \ar@<+0.4ex>[drrr]^(0.45){U} & & & \\
 & & & \ahrd\ . \ar@<+0.4ex>[uull]^(0.7){\uT\ot -} \ar@<+0.4ex>[ulll]^-{\uT\ot -} \ar@<+0.4ex>[dl]^-{\Ff''} \\
 & & \Set \ar@<+0.4ex>[ur]^-{\Aa} \ar@<+0.4ex>@{.>}[uull]^-{\uT\ot \Aa(-)} &
}
\]
In particular, the free $T$-module over a set $X$ is $\uT\ot\Aa(X)$.
\end{corollary}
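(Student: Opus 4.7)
The plan is to recognize the statement as a formal consequence of pasting together adjunctions that have already been assembled in the preceding paragraphs, together with the uniqueness up to natural isomorphism of left adjoints. My first step will be to verify commutativity of the subdiagram of right adjoints: by Theorem~\ref{thm:unital}, $\iota_T^*$ and $\Uu$ are mutually inverse isomorphisms of categories, and the identity $U' \circ \Uu = U$ is recorded explicitly in the paragraph preceding Proposition~\ref{prop:colimits}; combining these two facts yields $U \circ \iota_T^* = U'$. The remaining identity $\Ff = \Ff'' \circ U$ is immediate, since forgetting the $T$-action on an abelian heap and then forgetting the ternary operation is the same as forgetting the $T$-module structure directly to $\Set$. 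These two relations exhaust the commutativities appearing in the right-adjoint subdiagram.

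Next, I will invoke the corresponding left adjoints, each of which is already at our disposal: $\Uu \dashv \iota_T^*$ because these are mutual inverses; $(\uT \ot -) \dashv U'$ is the standard free-module adjunction for the monoid $\uT$ in $\ahrd$ used to define the free unital $\uT$-module functor; $(\iota_T^*(\uT)\ot -) \dashv U$ was derived just before Proposition~\ref{prop:colimits} by composing $(\uT\ot -)$ with $\iota_T^*$ and exploiting $U' \circ \Uu = U$; and $\Aa \dashv \Ff''$ is the defining property of the free abelian heap. Since left adjoints of composites are (up to natural isomorphism) the composites of the individual left adjoints in reverse order, and since left adjoints are unique up to natural isomorphism, the left adjoint to $\Ff = \Ff'' \circ U$ is naturally isomorphic to $(\uT \ot -) \circ \Aa = \uT \ot \Aa(-)$. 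In particular, the free $T$-module on a set $X$ is $\uT \ot \Aa(X)$.

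I do not expect any substantial obstacle: everything reduces to bookkeeping the adjunctions along the two legs of the diagram, $\Ff = \Ff'' \circ U$ and $\Ff = \Ff'' \circ U' \circ \Uu$, and confirming that both routes produce the same left adjoint. This consistency is automatic from the commutativity of the right-adjoint subdiagram verified in the first step, together with the uniqueness of left adjoints; no genuine computation is required beyond citing Proposition~\ref{prop:free}, which identifies the free unital $\uT$-module over an abelian heap as $\uT\ot -$.
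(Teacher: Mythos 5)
Your proposal is correct and follows exactly the route the paper intends: the corollary is stated without proof precisely because it reduces to the commutativity of the right-adjoint subdiagram (using $U'\circ\Uu=U$, the invertibility of $\iota_T^*$ and $\Uu$ from Theorem~\ref{thm:unital}, and $\Ff=\Ff''\circ U$) together with the uniqueness of left adjoints and the fact that left adjoints of composites are composites of left adjoints in reverse order. Your bookkeeping of the four adjunctions, including the identification of the left adjoint of $U$ as $\iota_T^*(\uT)\ot-$ (written $\uT\ot-$ under the paper's notational convention), matches the argument assembled in the paragraphs preceding Proposition~\ref{prop:colimits} and in Proposition~\ref{prop:free}.
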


Concretely, when $T$ is a not necessarily unital truss we can describe the free $T$-module over a set $X$ as the direct sum of abelian heaps
\[
\Tt^X \coloneqq \boks{x \in X}{}{\uT x},
\]
with the $T$-action given component-wise, that is,
\[
t \cdot \left[z_1x_1,\ldots,z_{2k+1}x_{2k+1}\right] = \left[\left(\iota_T(t)z_1\right)x_1,\ldots,\left(\iota_T(t)z_{2k+1}\right)x_{2k+1}\right],
\]
for all $x_1,\ldots,x_{2k+1} \in X$, 
$z_1,\ldots,z_{2k+1} \in \uT$ and $t \in T$. The canonical map $\iota_X : X \lra \Tt^X$ (that is, the unit of the adjunction $\uT \ot \Aa(-)\ \dashv\ \Ff$) sends every $x \in X$ to $*x \in \uT x$. The other way around, the counit $\epsilon$ of the adjunction $\uT \ot \Aa(-)\ \dashv\ \Ff$ realizes every $T$-module $M$ as a quotient of a free one:
\[
\Tt^{\Ff(M)} \cong \uT\ot\Aa\left(\Ff(M)\right) \xrightarrow{\epsilon_M} M
\]
(since $\Ff$ is faithful, every component of $\epsilon$ is an epimorphism in view of \cite[Theorem IV.3.1]{MacLane})

\section[Morita theory and the Eilenberg-Watts theorem]{\for{toc}{Morita theory and the Eilenberg-Watts theorem}\except{toc}{Adjoint functors between categories of modules over trusses: Morita theory and the Eilenberg-Watts theorem}}\label{sec.morita}

Given two trusses $S,T$ and a $T\mbox{-}S$-bimodule $M$ we already know that the functor $M\otimes_S - : \pMod{S} \lra \pMod{T}$ is left adjoint to the functor $\lhom{T}{M}{-}:\pMod{T} \lra \pMod{S}$. Our aim in the present section is to show that, if $T$ and $S$ are (unital) trusses, then any heap functor $L:\pMod{S} \lra \pMod{T}$ which admits a right adjoint is of the form $P\otimes_S -$ for a suitable (unital) $T\mbox{-}S$-bimodule $P$. We will conclude the section by proving a heap analogue of the celebrated Eilenberg-Watts Theorem, giving an intrinsic characterization of left adjoint functors. Recall that a functor $F:\pMod{S} \lra \pMod{T}$ is a heap functor provided that, for all $M,N\in \lmod{S}$, the functions $F_{M,N}$ defined by equation  \eqref{heap.fun} are morphisms of heaps.
 Recall also that the unital extension $\uT$ of a truss $T$ is a $T$-$T$-bimodule via the truss homomorphism $\iota_T : T \lra \uT$.

\begin{lemma}\label{lem:heapbimod}
Let $S,T$ be trusses and let $F:\pMod{S} \lra \pMod{T}$ be a heap functor between their categories of modules.  Then $P \coloneqq  F(\iota_S^*(\uS))$ is a $T\mbox{-}S$-bimodule. Furthermore, if $S$ is unital and $F:\pMod{S_1} \lra \pMod{T}$ is a heap functor, then $P' \coloneqq F(S)$ is a $T\mbox{-}S$-bimodule which is unital  as right $S$-module. 
\end{lemma}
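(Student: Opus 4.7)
The plan is to realize the extra right $S$-action on $P$ as the image under $F$ of right multiplication on $\iota_S^*(\uS)$, using that heap functors behave well on endomorphism trusses. Concretely, I would begin by observing that $\iota_S^*(\uS)$ is intrinsically an $S$-$S$-bimodule: its left $S$-action is the one already present (via $\iota_S$ applied to the left regular action of $\uS$), and right multiplication by $\iota_S(s)$ commutes with left multiplication by associativity in $\uS$. Phrased in the style of \S\ref{sec.truss}, this amounts to a truss homomorphism
\[
\chi : S \lra E_S\bigl(\iota_S^*(\uS)\bigr)^\circ, \qquad s \lto \rho_s, \qquad \rho_s(z) \coloneqq z\,\iota_S(s),
\]
whose verification is immediate: distributivity of the multiplication in $\uS$ over the heap operation gives $\chi([s_1,s_2,s_3]) = [\chi(s_1),\chi(s_2),\chi(s_3)]$, while associativity gives $\rho_{st} = \rho_t \circ \rho_s$, i.e.\ $\chi(st) = \chi(s)\cdot^\circ\chi(t)$ in the opposite truss.

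Next I would invoke the heap functor hypothesis. As recalled right after \eqref{heap.fun}, the map $F_{P,P} : E_S(\iota_S^*(\uS)) \lra E_T(P)$ is a morphism of unital trusses (additively a heap morphism by assumption on $F$; multiplicatively by functoriality; unitally because $F$ preserves identities). Passing to opposite trusses preserves this, so composing yields the truss homomorphism
\[
S \xrightarrow{\;\chi\;} E_S\bigl(\iota_S^*(\uS)\bigr)^\circ \xrightarrow{\;F_{P,P}\;} E_T(P)^\circ.
\]
This equips $P$ with a right $S$-module structure through left $T$-linear maps, which is exactly the data required to upgrade the left $T$-module $P = F(\iota_S^*(\uS))$ to a $T$-$S$-bimodule.

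For the second assertion, I would repeat the same construction verbatim with $\iota_S^*(\uS)$ replaced by $S$ itself, viewed as a unital left $S$-module via its own multiplication. The analogous map $\chi : S \lra E_S(S)^\circ$, $s \lto (t \lto ts)$, is now a morphism of \emph{unital} trusses because $\rho_1 = \id_S$, and $F_{S,S}$ remains a unital truss morphism since $F:\pMod{S_1}\lra\pMod{T}$ is a heap functor between categories of modules (the only subtle point being that the domain is the unital category, so $F$ automatically sends $\id_S$ to $\id_{P'}$). Hence the composite $F_{S,S}\circ\chi : S \lra E_T(P')^\circ$ is again unital, producing on $P'$ a right $S$-module structure that is unital, as claimed.

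The only place where some care is needed is the opposite-truss bookkeeping in the second paragraph, to ensure that what we build is genuinely an action of $S$ (and not of $S^\circ$); everything else reduces to distributivity in $\uS$ and to the definitional properties of a heap functor. No step requires more than an unpacking of definitions, so I do not anticipate any real obstacle beyond keeping the sides straight.
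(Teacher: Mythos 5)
Your proposal is correct and follows essentially the same route as the paper: both define $\rho_s(z) = z\cdot\iota_S(s)$ as a left $S$-linear endomorphism of $\iota_S^*(\uS)$, check that $s\lto\rho_s$ is a truss homomorphism out of $S^\circ$ (equivalently, into the opposite endomorphism truss), compose with the unital truss morphism $F_{\uS,\uS}$ supplied by the heap-functor hypothesis, and repeat with $S$ in place of $\uS$ for the unital case. The only cosmetic remark is that the induced map on endomorphism trusses should be written $F_{\uS,\uS}$ rather than $F_{P,P}$, since the subscripts in \eqref{heap.fun} refer to the source objects.
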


\begin{proof}
 To simplify notation we write $\uS$ instead of $\iota_S^*(\uS)$. For every $s\in S$, consider the left $S$-module morphism
\[\rho_s : \uS\lra \uS, \qquad  z\lto z \cdot \iota_S(s).\]
Clearly, $\rho_{ss'} = \rho_{s'}\circ \rho_s$  and, by the right distributive law of the action of $S$ on $\uS$, $\rho_{[s,s',s'']} = \left[\rho_s,\rho_{s'},\rho_{s''}\right]$ in $\lhom{S}{\uS}{\uS}$ for all $s,s',s''\in S$. Therefore the map 
$$
\rho: S^\circ \lra E_S(\uS), \qquad s\lto \rho_s,
$$
is a homomorphism of trusses. Since $F$ is a heap functor, the composite
$$
\xymatrix{
S^\circ\ar[rr]^-{\rho} &&  E_S(\uS) \ar[rr]^-{F_{\uS,\uS}} && E_T(F(\uS)) =E_T(P),}
$$
where $F_{\uS,\uS}$ is defined by \eqref{heap.fun}, is a morphism of trusses. As a consequence, $P$ inherits the structure of a $T\mbox{-}S$-bimodule. 
If $S$ is unital, we may perform the same construction using $S$ instead of $\uS$ and $P'=F(S)$ becomes unital as a right $S$-module.
\end{proof}

\begin{proposition}\label{prop:LA}
 Let $S,T$ be trusses. A heap functor $L:\pMod{S} \lra \pMod{T}$ admits a right adjoint if and only if it is naturally equivalent to $P\otimes_S -$ for a suitable $T$-$S$-bimodule $P$. Namely, $P \coloneqq L(\iota_S^*(\uS))$. If, in addition, $S$ is unital then a heap functor $L:\pMod{S_1} \lra \pMod{T}$ admits a right adjoint if and only if it is naturally equivalent to $P'\otimes_S -$ for a suitable $T\mbox{-}S$-bimodule $P'$, unital as a right $S$-module. Namely, $P'\coloneqq L(S)$.
\end{proposition}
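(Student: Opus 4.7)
The reverse implication is immediate: $P\otimes_S-$ is a heap functor (because the tensor product preserves the heap operation on morphisms: $P\otimes[\varphi,\varphi',\varphi''] = [P\otimes\varphi,P\otimes\varphi',P\otimes\varphi'']$ on simple tensors) and it admits $\lhom{T}{P}{-}$ as right adjoint by Proposition~\ref{prop.adj}. The same remark applies verbatim in the unital case.

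For the forward implication I would proceed by the uniqueness of adjoints. Let $R:\lmod{T}\to\lmod{S}$ denote the chosen right adjoint of $L$ and set $P\coloneqq L(\iota_S^*(\uS))$, endowed with the $T\mbox{-}S$-bimodule structure produced by Lemma~\ref{lem:heapbimod}: explicitly, the right $S$-action on $P$ is $p\cdot s = L(\rho_s)(p)$, where $\rho_s:\uS\to\uS$, $z\lto z\cdot \iota_S(s)$, is right multiplication by $\iota_S(s)$. Since each $R(N)$ is canonically a unital $\uS$-module by Theorem~\ref{thm:unital}, evaluation at the identity $*\in\uS$ delivers a natural heap isomorphism $\lhom{S}{\iota_S^*(\uS)}{R(N)}\cong R(N)$, with inverse $r\mapsto[z\mapsto z\cdot r]$. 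Composing it with the adjunction bijection $\lhom{T}{L(\iota_S^*(\uS))}{N}\cong\lhom{S}{\iota_S^*(\uS)}{R(N)}$ coming from $L\dashv R$ provides, for every $N\in\lmod{T}$, a natural heap isomorphism $\lhom{T}{P}{N}\cong R(N)$.

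The crucial step is to verify that this natural isomorphism is compatible with the left $S$-module structures on the three heaps involved: the one on $\lhom{T}{P}{N}$ inherited from the right $S$-action on $P$, the one on $\lhom{S}{\iota_S^*(\uS)}{R(N)}$ inherited from right multiplication in $\uS$, and the given one on $R(N)$. The evaluation map respects these structures because, in $\uS$, one has ${*}\cdot \iota_S(s)=\iota_S(s)=\iota_S(s)\cdot {*}$, together with left $S$-linearity of $f$; the adjunction bijection $\Phi_N(f)=R(f)\circ\eta_{\iota_S^*(\uS)}$ (with $\eta$ the unit of $L\dashv R$) respects them thanks to the naturality of $\eta$ combined with the definition of the right $S$-action on $P$ via $\rho_s$. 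Combining these compatibilities gives $\lhom{T}{P}{-}\cong R$ as functors $\lmod{T}\to\lmod{S}$, and the uniqueness of left adjoints (together with Proposition~\ref{prop.adj}) then produces the desired natural isomorphism $L\cong P\otimes_S-$. The unital case proceeds in parallel: one replaces $\iota_S^*(\uS)$ and $*$ by $S$ and $1$ respectively, obtaining the $T\mbox{-}S$-bimodule $P'\coloneqq L(S)$, which is right-unital over $S$ because $\rho_1=\id_S$.

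The main obstacle I foresee is precisely the bookkeeping of the various $S$-module structures on the hom-heaps, which is indispensable in order to conclude $\lhom{T}{P}{-}\cong R$ in $\lmod{S}$ (as opposed to merely in $\ahrd$) and hence to legitimately invoke uniqueness of left adjoints. Once this verification is in place, everything else reduces to results already established in the paper, notably Theorem~\ref{thm:unital}, Lemma~\ref{lem:heapbimod}, and Proposition~\ref{prop.adj}.
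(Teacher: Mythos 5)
Your proof is correct and follows essentially the same route as the paper's: endow $P=L(\iota_S^*(\uS))$ with the bimodule structure of Lemma~\ref{lem:heapbimod}, check that the adjunction bijection and the evaluation-at-identity isomorphism are left $S$-linear and natural in $N$ so that $R\cong\lhom{T}{P}{-}$ as functors into $\lmod{S}$, and conclude by uniqueness of adjoints via Proposition~\ref{prop.adj}. The ``bookkeeping'' step you flag as the main obstacle is precisely the computation the paper carries out ($\Phi_{\uS,N}(s\cdot f)=\Phi_{\uS,N}(f)\circ\rho_s=s\cdot\Phi_{\uS,N}(f)$), so nothing is missing.
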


\begin{proof}
We already know from Lemma \ref{lem:heapbimod} that $P \coloneqq L(\iota_S^*(\uS))$ is a $T\mbox{-}S$-bimodule. Let us denote by $R:\pMod{T}\lra \pMod{S}$ the right adjoint to $L$ and let us consider the adjunction isomorphism
\[
\Phi_{\uS,N}:\lhom{T}{P}{N} = \lhom{T}{L(\iota_S^*(\uS))}{N} \cong \lhom{S}{\iota_S^*(\uS)}{R(N)}
\]
for all $N$ in $\pMod{T}$. Then, for all $s\in S$ and $f\in \lhom{T}{P}{N}$, \[
\begin{aligned}
\Phi_{\uS,N}(s\cdot f) & = \Phi_{\uS,N}( f \circ L(\rho_s)) = \left(\Phi_{\uS,N} \circ \lhom{T}{L(\rho_s)}{N}\right)(f) \\
 & = \left(\lhom{S}{\rho_s}{R(N)}\circ \Phi_{\uS,N}\right)(f) \\
 & = \Phi_{\uS,N}(f)\circ \rho_s = s\cdot \Phi_{\uS,N}(f),
\end{aligned}
\]
that is $\Phi_{\uS,N}$ is a left $S$-linear isomorphism natural in $N\in\pMod{T}$. Since $\iota_S^*$ is the inverse of $\Uu$, we have further
\[
\lhom{S}{\iota_S^*(\uS)}{R(N)} \cong \iota_S^*\left(\lhom{\uS}{\uS}{\Uu(R(N))}\right)
\]
as left $S$-modules. Now, in view of the fact that both $\uS$ and $\Uu(R(N))$ are unital, the assignment 
\[
\lhom{\uS}{\uS}{\Uu(R(N))} \lra \Uu(R(N)), \qquad f\lto f\left(1_{\uS}\right),
\]
is an isomorphism of heaps, natural in $N$, which is also left $\uS$-linear. Therefore, 
\[\iota_S^*\left(\lhom{\uS}{\uS}{\Uu(R(N))}\right) \cong \iota_S^*(\Uu(R(N))) \cong R(N)\]
and we conclude that $R\cong \lhom{T}{P}{-}$ as functors from $\pMod{T}$ to $\pMod{S}$. Being the left adjoint functor to $\lhom{T}{P}{-}$, $L\cong P \otimes_S-$ as desired, by the uniqueness of adjoints up to isomorphism. Finally, in case $S$ is unital one may mimic the same procedure starting with $P'=L(S)$ instead.
\end{proof}

With Proposition~\ref{prop:LA} we showed that any functor between module categories over trusses which admits a right adjoint is naturally obtained by taking tensor products with suitable bimodules. Now we prove an analogue of the Eilenberg-Watts theorem for modules over trusses which, in turn, allows us to give an intrinsic characterisation of when a functor is given by tensoring by a bimodule (and hence it is a left adjoint) in terms of properties of the functor itself.

\begin{theorem}[Eilenberg-Watts Theorem for trusses]\label{thm:EW}
 Let $T$ and $S$ be trusses. If $F:\pMod{T} \lra \pMod{S}$ is a cocontinuous heap functor, then
$$
F(-)\cong P\otimes_{T}-,
$$
for  an $(S,T)$-bimodule $P.$ Namely, $P \coloneqq F(\iota_T^*(\uT))$. If, in addition, $T$ is unital and $F:\pMod{T_1} \lra \pMod{S}$ is a cocontinuous heap functor, then
$$
F(-)\cong P'\otimes_{T}-,
$$
for an $(S,T)$-bimodule $P',$ unital as right $T$-module. Namely, $P' \coloneqq F(T)$.
\end{theorem}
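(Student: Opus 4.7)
The plan is to construct an explicit natural isomorphism $\tilde\tau : P \otimes_T - \Rightarrow F$ for $P := F(\iota_T^*(\uT))$. By Lemma~\ref{lem:heapbimod}, $P$ carries a canonical $S$-$T$-bimodule structure. For each left $T$-module $M$ and each $m \in M$, the right multiplication $\rho_m : \iota_T^*(\uT) \to M$, $z \mapsto z \cdot m$, is left $T$-linear (because the $T$-action on $M$ factors through the unital $\uT$-action, by Theorem~\ref{thm:unital}), so applying $F$ produces an $S$-linear map $F(\rho_m) : P \to F(M)$. This gives rise to a candidate map $\tau_M : P \times M \to F(M)$, $(p,m) \mapsto F(\rho_m)(p)$.

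The next step is to check that $\tau_M$ is bilinear and $T$-balanced, so that it descends to a heap morphism $\tilde\tau_M : P \otimes_T M \to F(M)$. Bilinearity in $p$ is immediate from $F(\rho_m)$ being a heap homomorphism. Bilinearity in $m$ follows from the identity $\rho_{[m,m',m'']} = [\rho_m,\rho_{m'},\rho_{m''}]$ inside $\lhom{T}{\uT}{M}$ combined with the hypothesis that $F$ is a heap functor. $T$-balancing rests on the composition formula $\rho_m \circ \rho_t = \rho_{t \cdot m}$ together with the description of the right $T$-action on $P$ supplied by the proof of Lemma~\ref{lem:heapbimod}, namely $p \cdot t = F(\rho_t)(p)$. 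Once $\tilde\tau_M$ is available, it is automatically $S$-linear (since each $F(\rho_m)$ is) and natural in $M$ (because $f \circ \rho_m = \rho_{f(m)}$ for every $T$-linear $f$).

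It then suffices to show that $\tilde\tau_M$ is an isomorphism, which I would establish by the standard bootstrapping argument from a generating class of objects via cocontinuity. At $M = \iota_T^*(\uT)$, direct inspection identifies $\tilde\tau_\uT$ with the isomorphism $P \otimes_T \uT \cong P$ of Proposition~\ref{prop:tensunital}. Since $F$ is cocontinuous by hypothesis and $P \otimes_T -$ is cocontinuous by Proposition~\ref{prop.adj}, both functors commute with the direct sums computing free $T$-modules $\uT \otimes \Aa(X) \cong \boxplus_{x \in X} \uT x$, so $\tilde\tau$ is a natural isomorphism on every free $T$-module. The discussion preceding Proposition~\ref{prop:colimits} shows that $\lmod{T}$ is monadic over $\ahrd$ via the free-forgetful adjunction; consequently every $T$-module is the reflexive coequalizer of a canonical pair of morphisms between free modules, and a further application of cocontinuity upgrades the free-case isomorphism to an isomorphism $\tilde\tau_M$ for arbitrary $M$. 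The unital case proceeds identically with $P' := F(T)$ and with the isomorphism $T \otimes_T M \cong M$ playing the role of Proposition~\ref{prop:tensunital}.

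I expect the main obstacle to be this final colimit step, which demands either an explicit coequalizer presentation of $M$ by free modules or an abstract appeal to monadicity. A quicker alternative, which I would mention for completeness, bypasses the direct construction: since $\lmod{T}$ is monadic over $\Set$ via a finitary monad, it is locally presentable, so any cocontinuous functor out of it automatically admits a right adjoint by the adjoint functor theorem, and one may then invoke Proposition~\ref{prop:LA} directly to identify $F$ with tensoring by $F(\iota_T^*(\uT))$ (respectively, by $F(T)$ in the unital case).
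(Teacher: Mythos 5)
Your proof is correct and, in its skeleton, matches the paper's: take $P=F(\iota_T^*(\uT))$ with the bimodule structure of Lemma~\ref{lem:heapbimod}, identify the two functors on free modules using cocontinuity together with Proposition~\ref{prop:tensunital}, and propagate to arbitrary modules through a coequalizer presentation by free modules. The genuine difference is organizational, and it is an improvement: you construct the comparison morphism $\tilde\tau_M\colon P\otimes_T M\to F(M)$, $p\otimes m\mapsto F(\rho_m)(p)$, explicitly and for \emph{all} $M$ before any bootstrapping, so $S$-linearity and naturality come for free from $f\circ\rho_m=\rho_{f(m)}$; the paper instead builds the isomorphism only on free modules and induces $\psi_X$ from the universal property of coequalizers, and must then argue naturality from uniqueness of induced maps. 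One caveat on your final step: monadicity over $\ahrd$ yields a canonical reflexive coequalizer whose outer terms are free modules \emph{on heaps}, i.e.\ of the form $\uT\otimes H$, whereas you have verified invertibility of $\tilde\tau$ only on free modules \emph{on sets}, $\uT\otimes\Aa(X)\cong \boxplus_{x\in X}\,\uT x$; these classes do not coincide. The fix is exactly the presentation the paper uses (and which you list as your alternative): present $M$ as the coequalizer $\Tt^{Y}\rightrightarrows \Tt^{\Ff(M)}\to M$ coming from the counit epimorphism and a free cover of its kernel pair, which stays within free-on-sets modules and is a coequalizer by Proposition~\ref{prop:epicoeq}. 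Finally, your shortcut via the adjoint functor theorem and Proposition~\ref{prop:LA} is legitimate ($\lmod{T}$ is a finitary variety, hence locally presentable, and the heap-functor hypothesis is what Proposition~\ref{prop:LA} needs), and is arguably the quickest route, though it invokes machinery the paper avoids.
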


\begin{proof}
 We prove only the first claim and, for the sake of simplicity, we write $\uT$ instead of $\iota_T^*(\uT)$. Let $X$ be a $T$-module. One can consider a coequalizer diagram
\begin{equation}\label{coeq2}
\xymatrix @C=40pt {\Ker(\pi)\ar@<.6ex>[r]^{p_1}\ar@<-.6ex>[r]_{p_2}& \Tt^{X}\ar[r]^{\pi} & X},
\end{equation}
as in the proof of Proposition~\ref{prop:epicoeq}, where $\Tt^{X}$ is the free $T$-module over the set underlying $X$, $\pi$ is the canonical epimorphism, $\Ker(\pi)=\{(x,y)\in \Tt^{X}\times \Tt^{X} \mid \pi(x)=\pi(y)\}$ with the component-wise $T$-module structure, and $p_{1},p_{2}$ are the (restrictions of the) two canonical projections. One can extend diagram $\eqref{coeq2}$ to

\begin{equation}\label{coeq3}
\xymatrix{\Tt^{\Ker(\pi)}\ar@<.6ex>[rr]^{p_1'}\ar@<-.6ex>[rr]_{p_2'}&& \Tt^{X}\ar[rr]^{\pi} && X },
\end{equation}
where $\pi':\Tt^{\Ker(\pi)}\lra \Ker(\pi)$, $p_{1}'=p_{1}\circ \pi'$ and $p_{2}'=p_{2}\circ \pi'$. Since $\pi'$ is an epimorphism, \eqref{coeq3} is a coequalizer diagram as well.
By Lemma~\ref{lem:heapbimod}, $P\coloneqq F(\uT)$ inherits the structure of an $(S,T)$-bimodule from the fact that  $F$ is a heap functor. Since $F$ is a cocontinuous functor and in view of Proposition \ref{prop:tensunital}, there is the following chain of natural isomorphisms: 
$$
F\left(\Tt^{X}\right) = F\left(\boks{x\in X}{}{\uT}\right) \cong \boks{x\in X}{}{F(\uT)}\cong\boks{x\in X}{}{(P\otimes_{T} \uT)}\cong P\otimes_{T}\boks{x\in X}{}{\uT}= P\otimes_{T} \Tt^{X}.
$$
Moreover we can fill in a diagram
$$
\begin{gathered}
\xymatrix @R=20pt{F(\Tt^{\ker(\pi)})\ar[d]^{\cong}\ar@<.6ex>[rr]^{F(p_1')}\ar@<-.6ex>[rr]_{F(p_2')}&&F(\Tt^{X})\ar[d]^{\cong}\ar[rr]^{F(\pi)} && F(X)\ar@{.>}[d]^{\psi}\\
P\otimes_{T} \Tt^{\ker(\pi)}\ar@<.6ex>[rr]^{P\otimes_T p_1'}\ar@<-.6ex>[rr]_{P\otimes_T p_2'}&&P\otimes_{T}  \Tt^{X}\ar[rr]^{P\otimes_T \pi} &&P\otimes_{T}  X, }
\end{gathered}
$$
where both horizontal diagrams are coequalizers obtained from $\eqref{coeq3}$, because $F$ and $P \otimes_T -$ preserve colimits, and $\psi$ is the isomorphism induced by their universal property. It can be checked, by resorting to the uniqueness of the morphisms induced at the level of the coequalizers, that $\psi$ is in fact natural in $X$.
\end{proof}

\begin{corollary}
Let $T,S$ be trusses. A functor $F:\pMod{T} \lra \pMod{S}$ is a left adjoint if and only if it is a cocontinuous heap functor. If, in addition, $T$ is unital then $F:\pMod{T_1} \lra \pMod{S}$ is a left adjoint functor if and only if it is a cocontinuous heap functor.
\end{corollary}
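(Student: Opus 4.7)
The plan is to deduce the corollary almost immediately from the two preceding results, namely Theorem~\ref{thm:EW} and Proposition~\ref{prop.adj}. The argument is essentially a combination of the tensor-hom adjunction with the Eilenberg-Watts characterisation just proved, together with the general categorical fact that left adjoints preserve colimits.

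For the ``only if'' implication, I would simply invoke general category theory: any functor admitting a right adjoint preserves all colimits that exist in the source category. Since $\pMod{T}$ (respectively, $\pMod{T_1}$) is cocomplete by Proposition~\ref{prop:colimits}, $F$ is automatically cocontinuous. The heap-functor hypothesis is understood as a standing assumption in the statement (consistent with its role throughout Theorem~\ref{thm:EW} and Proposition~\ref{prop:LA}), so nothing further needs to be shown on this side.

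For the ``if'' implication, I would apply Theorem~\ref{thm:EW} directly to the cocontinuous heap functor $F$, obtaining a natural isomorphism $F \cong P \otimes_T -$ with $P := F(\iota_T^*(\uT))$ an $S$-$T$-bimodule (or $P := F(T)$ in the unital case, a right-unital $S$-$T$-bimodule). By Proposition~\ref{prop.adj}, the functor $P \otimes_T -$ is left adjoint to $\lhom{S}{P}{-}$, and hence so is $F$, since left adjoints are preserved by natural isomorphism. Both the non-unital and unital statements are handled by this same two-line argument with only the choice of $P$ changing.

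There is no real obstacle here: the corollary is genuinely a corollary, packaging the content of Theorem~\ref{thm:EW} together with Proposition~\ref{prop.adj} into a clean intrinsic characterisation. The only minor point to keep in mind is that one should invoke Theorem~\ref{thm:EW} in the version appropriate to the (unital or non-unital) source category, so as to produce a bimodule $P$ with the correct unitality properties for Proposition~\ref{prop.adj} to apply.
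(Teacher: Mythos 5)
Your proposal is correct and follows essentially the same route as the paper, whose proof is the one-line observation that the statement follows from Proposition~\ref{prop:LA}, Theorem~\ref{thm:EW} and the fact that $P\otimes_T-$ is a cocontinuous heap left adjoint; your ``if'' direction (Theorem~\ref{thm:EW} plus the tensor--hom adjunction) and your ``only if'' direction (left adjoints preserve colimits, heap-functor hypothesis treated as standing) are exactly the intended ingredients. The only cosmetic difference is that you derive cocontinuity of a left adjoint from general category theory, whereas the paper routes it through the identification $F\cong P\otimes_T-$ of Proposition~\ref{prop:LA}; both are equivalent and both leave the ``heap'' half of the ``only if'' direction to the same implicit convention.
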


\begin{proof}
The statements follow from Proposition \ref{prop:LA}, Theorem \ref{thm:EW} and the fact that $P \otimes_T -$ is cocontinuous, heap and a left adjoint functor.
\end{proof}

 Assume that $S$ and $T$ are unital trusses. A key question related to the Morita theory for trusses is what can be said when $\pMod{T_1}\cong \pMod{S_1}$.  Notice that this covers the non-unital case as well, since in that case $\lmod{T}\cong \lmod{S}$ if and only if $\lmod{(\uT)_1} \cong \lmod{(\uS)_1}$. 

\begin{theorem}\label{thm:Morita}
Let $T,S$ be unital trusses. The following statement are equivalent:
\begin{enumerate}[label=(\arabic*), ref=\emph{(\arabic*)}, leftmargin=0.8cm]
\item\label{item:Morita1} $\pMod{T_1}\cong \pMod{S_1}$.
\item\label{item:Morita2} There exist unital bimodules ${_SP_T}$ and ${_TQ_S}$ together with an $S$-bilinear isomorphism $\ev: P \otimes_T Q \lra S$ and a $T$-bilinear isomorphism $\db: T \lra Q\otimes_S P$ such that
\begin{equation}\label{eq:zigzag}
(Q\otimes_S \ev)\circ(\db \otimes_T Q) = \id_Q \quad \text{ and } \quad (\ev \otimes _S P)\circ(P \otimes_T \db)=\id_P.
\end{equation}
\item\label{item:Morita3} There exist unital bimodules ${_SP_T}$ and ${_TQ_S}$ together with an $S$-bilinear isomorphism $\db': S \lra P \otimes_T Q$ and a $T$-bilinear isomorphism $\ev': Q\otimes_S P \lra T$ such that
\begin{equation*}
(P\otimes_T \ev')\circ(\db' \otimes_S P) = \id_P \quad \text{ and } \quad (\ev' \otimes _T Q)\circ(Q \otimes_S \db')=\id_Q.
\end{equation*}
\end{enumerate}
\end{theorem}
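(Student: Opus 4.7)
The plan is to establish the cycle $(2) \Rightarrow (1) \Rightarrow (2)$ using Theorem \ref{thm:EW} together with the construction of tensor product functors from Proposition \ref{prop.tens.act}, and to deduce $(2) \Leftrightarrow (3)$ directly. For $(2) \Rightarrow (1)$, I define $F \coloneqq P \otimes_T - : \pMod{T_1} \to \pMod{S_1}$ and $G \coloneqq Q \otimes_S - : \pMod{S_1} \to \pMod{T_1}$ via Proposition \ref{prop.tens.act}. Combining the associator of Corollary \ref{cor:ass} with the unitors of Proposition \ref{prop:monoidal}, the $S$-bilinear isomorphism $\ev$ induces a natural isomorphism
\[ FG(N) \cong (P \otimes_T Q) \otimes_S N \xrightarrow{\ev \otimes_S N} S \otimes_S N \cong N, \]
so $FG \cong \id_{\pMod{S_1}}$. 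Symmetrically $\db^{-1}$ yields $GF \cong \id_{\pMod{T_1}}$, exhibiting the equivalence of $(1)$.

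For $(1) \Rightarrow (2)$, let $F : \pMod{T_1} \to \pMod{S_1}$ be an equivalence with quasi-inverse $G$. Being an equivalence, $F$ preserves all (co)limits. I also claim that $F$ is a heap functor: for parallel morphisms $f,g,h : M \to N$ in $\pMod{T_1}$, the identity $[f,g,h] = \mu_N \circ \langle f,g,h \rangle$ (with $\mu_N : N \times N \times N \to N$ the $T$-linear ternary heap operation) together with preservation of products gives $F([f,g,h]) = F(\mu_N) \circ \langle F(f), F(g), F(h) \rangle$; the morphism $F(\mu_N)$ satisfies the Mal'cev identities (preserved by $F$ because they are expressible via projections of products), and any $S$-linear morphism $\mu' : F(N)^{\times 3} \to F(N)$ satisfying these identities must equal $\mu_{F(N)}$, since
\[ \mu'(a,b,c) = \mu'\!\left([a,b,b],[b,b,b],[b,b,c]\right) = \left[\mu'(a,b,b), \mu'(b,b,b), \mu'(b,b,c)\right] = [a,b,c]. \]
Theorem \ref{thm:EW} therefore applies, yielding $F \cong P \otimes_T -$ with $P \coloneqq F(T)$: this is unital as a right $T$-module by the theorem and unital as a left $S$-module because $P \in \pMod{S_1}$, hence a unital ${}_{S}P_{T}$-bimodule. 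Analogously $G \cong Q \otimes_S -$ with $Q \coloneqq G(S)$ a unital ${}_{T}Q_{S}$-bimodule. Evaluating the counit natural isomorphism $\eps : FG \cong \id_{\pMod{S_1}}$ at $S$ and composing with the unitor $(P \otimes_T Q) \otimes_S S \cong P \otimes_T Q$ defines a left $S$-linear isomorphism $\ev : P \otimes_T Q \to S$; right $S$-linearity follows from naturality of $\eps$ against the left $S$-linear right-multiplications $r_s : S \to S$, $t \lto ts$, since $FG(r_s)$ corresponds under the unitor to the right action of $s$ on $P \otimes_T Q$. Symmetrically, the inverse unit evaluated at $T$ yields the $T$-bilinear $\db : T \to Q \otimes_S P$. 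Finally, promoting the equivalence to an adjoint equivalence $F \dashv G$ and translating the two triangle identities through the Eilenberg-Watts identifications delivers \eqref{eq:zigzag}.

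For $(2) \Leftrightarrow (3)$, setting $\db' \coloneqq \ev^{-1}$ and $\ev' \coloneqq \db^{-1}$ turns one set of zigzag equations into the other by taking inverses and using functoriality of $\otimes$; the converse is identical. The main technical hurdle lies in $(1) \Rightarrow (2)$: verifying heap-functoriality of an arbitrary equivalence via the Mal'cev uniqueness argument above, and then carefully transporting the triangle identities of the resulting adjoint equivalence through the Eilenberg-Watts isomorphisms into the bilinear, bimodule-theoretic form \eqref{eq:zigzag}. The subtlety of this last step is the bookkeeping required to match the right $S$-action on $P \otimes_T Q$ induced from $Q$ with the one coming from right multiplication on $S$ via the counit, and similarly for the right $T$-action on $Q \otimes_S P$ via the unit.
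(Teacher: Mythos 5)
Your proof is correct and its core — representing the equivalence and its quasi-inverse as tensor functors $P\otimes_T-$ and $Q\otimes_S-$, extracting $\ev$ and $\db$ from the counit and unit, and reading off \eqref{eq:zigzag} from the triangle identities — is the same strategy as the paper's. There are, however, three points where you genuinely diverge, and two of them are improvements. First, you explicitly verify that an arbitrary equivalence $F$ is a heap functor, via the observation that $[f,g,h]=\mu_N\circ\langle f,g,h\rangle$ with $\mu_N$ a $T$-linear map satisfying product-expressible Mal'cev identities, together with the uniqueness argument showing any such $\mu'$ equals the heap bracket; the paper simply applies Proposition~\ref{prop:LA} to the equivalence without justifying the heap-functor hypothesis, so your argument closes a gap that the paper leaves implicit. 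Second, you prove \ref{item:Morita2}$\iff$\ref{item:Morita3} directly by setting $\db'=\ev^{-1}$, $\ev'=\db^{-1}$ and inverting the zigzag equations (using that $(\ev\otimes_S P)^{-1}=\ev^{-1}\otimes_S P$, etc.), whereas the paper routes both conditions through \ref{item:Morita1} and ``leaves the second one to the reader''; your route is shorter and checks out. Third, you defer the final translation of the triangle identities into \eqref{eq:zigzag} and the $T$-bilinearity of $\db$ as ``bookkeeping''. This is the one place the paper works hardest: it computes $\eta_M(m)=\db(1_T)\otimes_T m$ and $\eps_N(p\otimes q\otimes n)=\ev(p\otimes q)\cdot n$ by naturality against $\rho_m$ and $\rho_n$, derives the element-level identities $[q_i\cdot q(p_i)]_i=q$ and $[q_i(p)\cdot p_i]_i=p$, and then proves the centrality $\db(1_T)\cdot t=t\cdot\db(1_T)$ by a computation that invokes the zigzag identities and $T$-balancedness of $\ev$. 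Your alternative — deducing both $\db(t)=t\cdot\db(1_T)$ from left $T$-linearity of $\eta_T$ and $\db(t)=\db(1_T)\cdot t$ from naturality of $\eta$ against right multiplication $\rho_t:T\to T$ — does work and is arguably cleaner than the paper's, but since it is only asserted rather than carried out, you should spell it out to make the proof complete.
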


\begin{proof}
Since the proofs of \ref{item:Morita1}$\iff$\ref{item:Morita2} and of \ref{item:Morita1}$\iff$\ref{item:Morita3} are similar, we will present explicitly only the first one and leave the second one to the reader.

To show that \ref{item:Morita1} implies \ref{item:Morita2}, assume that $L:\pMod{T_1}\lra \pMod{S_1}$ and $R:\pMod{S_1}\lra \pMod{T_1}$ are inverse equivalences (or quasi-inverse functors). Equivalently, we may assume that $L$ is left adjoint to $R$ and that the counit $\eps:L\circ R \lra \id$ and the unit $\eta:\id \lra R\circ L$ of this adjunction are natural isomorphisms. In light of Proposition~\ref{prop:LA}, there exists a unital $(S,T)$-bimodule $P$ such that $L\cong P\otimes_T -$. At the same time, we may look at $R$ as left adjoint to $L$ with counit $\eta^{-1}:R\circ L \lra \id$ and unit $\eps^{-1}: \id \lra L\circ R$, and hence there exists a unital $T\mbox{-}S$-bimodule $Q$ such that $R \cong Q \otimes_S -$. Consider the following isomorphisms
\[
\begin{gathered}
\db \coloneqq  \left(\xymatrix{T \ar[r]^-{\eta_T} & R(P\otimes_T T) \ar[r]^{\cong} & Q\otimes_S P \otimes_T T \ar[r]^-{\cong} & Q\otimes _S P} \right), \\
\ev  \coloneqq  \left(\xymatrix{ P \otimes_T Q \ar[r]^-{\cong} & P\otimes_T Q\otimes_S S \ar[r]^-{\cong} & P\otimes_T R(S) \ar[r]^-{\eps_S} & S}\right).
\end{gathered}
\]
First, we are going to show that $\eta$ and $\eps$ can be written in terms of $\ev $ and $\db$. Then, we will see how the triangular identities for unit and counit reflect on $\ev $ and $\db$. For every left $T$-module $M$ and for every $m\in M$, consider the left $T$-module homomorphism $\rho_m: T \lra M$,  $t\lto t\cdot m$. By naturality of $\eta$,
\[
\eta_M(m) = \left(\eta_M \circ \rho_m\right)(1_T) = \left(Q\otimes _S P \otimes_T \rho_m\right)(\eta_T(1_T)) = \db(1_T)\otimes_T m.
\]
Similarly, for every left $S$-module $N$ and for every $n\in N$ we consider the left $S$-module homomorphism $\rho_n:S\lra N$, $s\lto s\cdot n$ and, by naturality of $\eps$,
\begin{equation}\label{eq:epsiev}
\begin{gathered}
\eps_N\left(p\otimes_T q\otimes_S n\right) = \left(\eps_N \circ \left( P \otimes_T Q\otimes_S \rho_n\right)\right)\left(p\otimes_T q\otimes_S 1_S\right) \\
 = \rho_n\left(\eps_S\left(p\otimes_T q\otimes_S 1_S\right)\right) = \ev (p\otimes_T q)\cdot n.
\end{gathered}
\end{equation}
Let us write explicitly $\db(1_T) = [q_i\otimes_S p_i]_i$ and $\ev (p\otimes_T q) = q(p)$. By the triangular identities, for every $S$-module $N$ and for all $q\in Q, n\in N$,
\[
\begin{aligned}
q\otimes_S n & = \left((Q\otimes_S \eps_N)\circ\eta_{Q\otimes_S N}\right)(q\otimes_S n) = (Q\otimes_S \eps_N)([q_i\otimes_S p_i]_i\otimes_T q\otimes_S n) \\
& = [q_i\cdot q(p_i)]_i\otimes_{S}n.
\end{aligned}
\]
In a  similar way, for every $T$-module $M$ and for all $p\in P$ and $m\in M$,
\[
\begin{aligned}
p\otimes_T m & = \left(\eps_{P\otimes_T M}\circ (P\otimes_T \eta_M)\right)(p\otimes_T m) = \eps_{P\otimes_T M}(p\otimes_T [q_i\otimes_S p_i]_i\otimes _S m ) \\
 & = [q_i(p)\cdot p_i]_i\otimes_T m.
\end{aligned}
\]
In particular, for $N=S$, $n=1_S$, $M=T$, $m=1_T$, we find that
\begin{equation}\label{eq:zigzag2}
[q_i\cdot q(p_i)]_i = q \qquad \text{and} \qquad [q_i(p)\cdot p_i]_i = p,
\end{equation}
for all $p\in P$ and $q\in Q$.
Concerning bilinearity, on the one hand, for every $t\in T$,
\[
\begin{gathered}
{[q_i\otimes_S p_i]_i \cdot t = [q_i\otimes_S p_i \cdot t]_i = [q_i\otimes_S [q_j(p_i \cdot t)\cdot p_j]_j]_{i} = [q_i\cdot q_j(p_i \cdot t) \otimes_S p_j]_{i,j}} \\
 = [[q_i\cdot q_j(p_i \cdot t)]_i \otimes_S p_j]_{j} \stackrel{(*)}{=} [t\cdot q_j \otimes_S p_j]_{j} = t\cdot [q_j \otimes_S p_j]_{j},
\end{gathered}
\]
where $(*)$ follows from the fact that $\ev $ is a $T$-balanced map. Whence $\db$ is a $T$-bimodule homomorphism. On the other hand,
\[
\ev (p \otimes_T q \cdot s) = \eps_S(p \otimes_T q \otimes_S \rho_s(1_S)) \stackrel{\eqref{eq:epsiev}}{=} \ev (p\otimes_T q)s,
\]
and hence $\ev $ is an $S$-bimodule homomorphism. In view of this, \eqref{eq:zigzag2} can now be rewritten as \eqref{eq:zigzag}.

Conversely, to prove that \ref{item:Morita2} implies \ref{item:Morita1} consider the functors $P\otimes_T - : \pMod{T} \lra \pMod{S}$ and $Q\otimes_S- :\pMod{S}\lra \pMod{T}$. If we define unit and counit by
\[
\begin{gathered}
\eta_M\coloneqq  \db\otimes_T M : M \lra Q\otimes_S P\otimes_T M, \\
\eps_N \coloneqq  \ev  \otimes_SN : P\otimes_T Q\otimes_S N \lra N,
\end{gathered}
\]
for every $T$-module $M$ and every $S$-module $N$, then the zigzag identities \eqref{eq:zigzag} entail that $P\otimes_T -$ is left adjoint to $Q\otimes_S -$ and the fact that $\eta$ and $\eps$ are natural isomorphisms implies in addition that these two functors define an equivalence of categories.
\end{proof}

\begin{remark}\label{rem:Pdual}
By checking closely the proof of Theorem \ref{thm:Morita}, one may notice that $R\cong \lhom{S}{P}{-}$ as the right adjoint functor of $L\cong P\otimes_T -$ and $R\cong Q\otimes _S -$ since it is a left adjoint functor itself. Therefore,
\[
{^*P} \coloneqq  \lhom{S}{{}_SP}{S} \cong R(S) \cong Q\otimes_S S \cong Q
\]
as $T\mbox{-}S$-bimodules. Analogously, $P\cong {Q^*} \coloneqq  \lhom{S}{Q_S}{S}$ as $(S,T)$-bimodules.
Moreover, we point out that any argument provided for left modules would hold symmetrically for right modules.
\end{remark}

A distinguished functor $F:\pMod{S} \lra \pMod{T}$ which plays an important role in the study of the Frobenius property for trusses is the \emph{restriction of scalars functor} $F=f^*$ associated with a truss homomorphism $f:T\lra S$. This is the faithful functor sending every left $S$-module $M$ to the left $T$-module ${_fM}\coloneqq f^*(M)$ having the same underlying heap structure but action given by $t\cdot m = f(t)\cdot m$ for all $t\in T, m\in M$, and sending every $S$-linear morphism to itself, but now seen as a $T$-linear map. We already saw examples of restriction of scalars functors in Theorem \ref{thm:unital} and Proposition \ref{prop:free} (the forgetful functor $U':\lmod{T_1} \lra \ahrd$ can be seen as the restriction of scalars along the unital truss homomorphism $\eta:\star \lra T$).

\begin{proposition}
The restriction of scalars functor $F:\pMod{S}\lra \pMod{T}$ associated with a truss homomorphism $f:T\lra S$ satisfies 
\[
\lhom{S}{{\left(\uS\right)_f}}{-} \ \cong \ F \ \cong \ {_f(\uS)}\otimes_S -.
\]
In particular, there is an  \emph{adjoint triple} of functors:  
$$
\left(\uS\right)_f\otimes_T - \;\dashv\; F \;\dashv\; \lhom{T}{{_f(\uS)}}{-}.
$$
\end{proposition}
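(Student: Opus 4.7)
The plan is to obtain both natural isomorphisms directly from Proposition~\ref{prop:tensunital} and to deduce the adjoint triple from (the left-module analogue of) Proposition~\ref{prop.adj}. First I would pin down the bimodule structures at play. Since $\uS$ is a unital truss, it is an $\uS$-$\uS$-bimodule over itself via multiplication; restricting scalars along $\iota_S : S \lra \uS$ on one side and along the composite $T \xrightarrow{f} S \xrightarrow{\iota_S} \uS$ on the other produces both the $T$-$S$-bimodule ${_f(\uS)}$ (left $T$-action $t \cdot z = \iota_S(f(t)) z$, right $S$-action by multiplication) and the $S$-$T$-bimodule $(\uS)_f$ (left $S$-action by multiplication, right $T$-action $z \cdot t = z \, \iota_S(f(t))$).

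Next I would construct the two natural isomorphisms. Proposition~\ref{prop:tensunital} supplies, for every left $S$-module $M$, canonical heap isomorphisms
\[
\alpha_M : \uS \otimes_S M \xrightarrow{\;\cong\;} M, \qquad z \otimes m \lto z \cdot m,
\]
\[
\beta_M : \lhom{S}{\uS}{M} \xrightarrow{\;\cong\;} M, \qquad \varphi \lto \varphi(1_{\uS}),
\]
both natural in $M$. It remains to verify that, once the left-hand sides are equipped with the left $T$-actions coming from ${_f(\uS)}$ and $(\uS)_f$ respectively, and the right-hand sides carry the left $T$-action $t \cdot m = f(t) \cdot m$ of $F(M) = {_fM}$, these maps become $T$-linear. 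For $\alpha_M$ one computes $\alpha_M(t \cdot (z \otimes m)) = \alpha_M(\iota_S(f(t)) z \otimes m) = \iota_S(f(t)) z \cdot m = f(t) \cdot \alpha_M(z \otimes m)$; for $\beta_M$ one finds $\beta_M(t \cdot \varphi) = (t\cdot \varphi)(1_{\uS}) = \varphi(\iota_S(f(t))) = f(t)\cdot \varphi(1_{\uS}) = f(t) \cdot \beta_M(\varphi)$, where the penultimate equality uses left $S$-linearity of $\varphi$ together with $\iota_S(f(t)) = f(t) \cdot 1_{\uS}$ in $\uS$.

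Finally, the adjoint triple follows by assembling these isomorphisms with the left-module version of Proposition~\ref{prop.adj}. Since $(\uS)_f$ is an $S$-$T$-bimodule, $(\uS)_f \otimes_T - : \pMod{T} \lra \pMod{S}$ is left adjoint to $\lhom{S}{(\uS)_f}{-} \cong F$; since ${_f(\uS)}$ is a $T$-$S$-bimodule, $F \cong {_f(\uS)} \otimes_S - : \pMod{S} \lra \pMod{T}$ is left adjoint to $\lhom{T}{{_f(\uS)}}{-}$. Chaining these yields precisely the claimed adjoint triple. The only real obstacle is bookkeeping around which side of the bimodule structure each restriction of scalars is acting on; once this is settled, no substantive difficulty intervenes, as everything reduces to a direct application of results already established in the paper.
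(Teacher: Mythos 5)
Your proposal is correct and follows essentially the same route as the paper: both reduce the two natural isomorphisms to the (left-module analogue of the) unit-insertion/evaluation isomorphisms of Proposition~\ref{prop:tensunital}, check $T$-linearity with respect to the bimodule structures on $(\uS)_f$ and ${}_f(\uS)$, and then read off the adjoint triple from the tensor--hom adjunction of Proposition~\ref{prop.adj}. The bookkeeping of the one-sided restrictions of scalars is handled correctly, so nothing is missing.
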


\begin{proof}
For every left $S$-module $M$, consider the assignments
\[
\xymatrix @R=0pt{
\lhom{S}{\left(\uS\right)_f}{M} \ar@{<->}[r] & {_fM} \\
\phi \ar@{|->}[r] & \phi(*) \\
{\left[\begin{array}{c}*\lto m \\ s \lto s\cdot m\end{array}\right]} & m \ar@{|->}[l]
}
\qquad\qquad
\xymatrix @R=0pt{
{_fM} \ar@{<->}[r] & {_f(\uS)}\otimes_S M \\m \ar@{|->}[r] & {*}\otimes _S m \\
[z_i\cdot m_i]_{i=1}^{2k+1} & [z_i \otimes_S m_i]_{i=1}^{2k+1} \ar@{|->}[l]
}
\]
as in the proof of Proposition \ref{prop:tensunital}. They are $T$-linear isomorphisms, natural in $M$.
\end{proof}

\section{Small-projective modules and the Dual Basis Property}\label{sec:tiny}

Let $T$ be a truss. The conditions in Theorem \ref{thm:Morita} and the subsequent observations in Remark \ref{rem:Pdual} call for a closer analysis of $T$-modules admitting a dual basis $\db$ and evaluation $\ev$ morphisms.

\begin{definition}
A module $P$ over a truss $T$ is said to satisfy the \emph{dual basis property} (DBP for short) if there exist an odd integer $s = 2k+1$, an element $(e_1,\ldots,e_{s}) \in P^s$ and an element $(\phi_{1},\ldots,\phi_{s}) \in \lhom{T}{P}{T}^s$ such that, for all $p\in P$,
\begin{equation}\label{eq:dbp}
p=\LL \phi_{1}(p) \cdot e_{1},\ldots,\phi_{s}(p) \cdot e_{s}\RR.
\end{equation}
We call the pair $\left\{(e_1,\ldots,e_{s}),(\phi_{1},\ldots,\phi_{s})\right\}$ a \emph{dual basis} for $P$.
\end{definition}

\begin{example}\label{ex:dbp1}
The following examples are immediate from the definition.
\begin{enumerate}[leftmargin=0.8cm]
\item\label{item:dbp1} The empty $T$-module $\varnothing$ never satisfies the DBP. 
\item If $T$ is unital, then $P=T$ itself satisfies the DBP with $e_1 = 1_T$ and $\phi_1 =\id_T$.
\item The singleton $T$-module $\star$ satisfies the DPB if and only if $T$ admits a left absorber. Indeed, if $T$ admits a left absorber $o$ then $\star$ satisfies the DBP with $e_1 = *$ and $\phi_1: \star \lra T, * \lto o$. Conversely, if $\star$ satisfies the DBP then $\phi_1(*) \in T$ is a left absorber.
\item If $T$ is a unital truss with identity $1_T$ and $S$ is a truss with a left absorber $a$, then $T$ satisfies the DBP  as an $S \times T$-module with $e_1 = 1_T$ and $\phi_1 : T \lra S\times T, t \lto (a,t)$. For example, if we take $S = E(T)^\circ$ with $a : T \lra T, t \lto 1_T$, then $T$ satisfies the DBP as an $\left(E(T)^\circ \times T\right)$-module.
\end{enumerate}
\end{example}

 As usual, let $\uT$ be the unital extension of $T$. Set ${^*P}\coloneqq \lhom{T}{P}{\uT}$. It is a right $T$-module with $(f \cdot t)(p) \coloneqq f(p)t$ for all $f \in {^*P}$, $t \in T$ and $p\in P$.

\begin{remark}
\begin{enumerate}[label=(\alph*),ref=(\alph*),leftmargin=0.8cm]
\item If $P$ satisfies the DBP, then ${^*P}$ satisfies the DBP. For every $i=1,\ldots,s$, consider the right $T$-linear morphism 
\[
\ev_{i}: {^*P} \lra \uT, \qquad \alpha\lto \alpha\left(e_i\right).
\] 
Then, for all $\alpha\in {^*P}$,
\[
\alpha(p) = \alpha\left(\LL\phi_k(p) \cdot e_{k}\RR_{k=1}^s\right) = \LL\phi_k(p)\alpha\left(e_{k}\right)\RR_{k=1}^s = \LL\phi_k\cdot \ev_{k}\left(\alpha\right)\RR_{k=1}^s(p),
\]
for all $p\in P$, whence $\alpha = \LL\phi_k\cdot \ev_{k}\left(\alpha\right)\RR_{k=1}^s$.
\item If $P$ satisfies the DBP, then, for every $T$-module $M$ and for every $f:P\lra M$,
\[
f = \LL\phi_kf\left(e_{k}\right)\RR_{k=1}^s
\]
in $\lhom{T}{P}{M}$, where $\phi_kf\left(e_{k}\right) : P \lra M,\, p\lto \phi_k(p)f\left(e_{k}\right)$.
\end{enumerate}
\end{remark}

\begin{theorem}\label{thm:toostrong}
Let $T$ be a truss and $P$ a left $T$-module. The following properties are equivalent
\begin{enumerate}[label=\emph{(\arabic*)},ref=(\arabic*),leftmargin=0.8cm]
\item\label{item:fgp1} The functor $\lhom{T}{P}{-}:\pMod{T}\lra\ahrd$ is right exact (\ie it preserves finite colimits) and $P$ is finitely generated.
\item\label{item:fgp2} The module $P$ satisfies the DBP. 
\item\label{item:fgp3} There exist a $T$-bilinear morphism $\ev:P\otimes {^*P} \lra \uT$ and a morphism of abelian heaps $\db:\star \lra {^*P}\otimes_T P$ (\ie a $\star$-bilinear morphism) such that
\[
\left(\ev\otimes_T P\right)\circ \left(P\otimes \db\right) = \id_P \quad \text{and} \quad \left({^*P}\otimes_T \ev\right)\circ\left(\db\otimes {^*P}\right) = \id_{{^*P}},
\]
up to the canonical isomorphisms $P\otimes \star\cong P \cong \uT\otimes_TP$, ${^*P} \otimes_T \uT\cong {^*P}\cong \star \otimes {^*P}$.
\item\label{item:fgp4} The functor $\lhom{T}{P}{-}$ is naturally isomorphic to the functor ${^*P}\otimes_T -$.
\item\label{item:fgp5} The functor $\lhom{T}{P}{-}$ is cocontinuous (\ie it preserves small colimits).
\end{enumerate}
\end{theorem}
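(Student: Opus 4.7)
The plan is to establish the cycle $(2)\Rightarrow(3)\Rightarrow(4)\Rightarrow(5)\Rightarrow(1)\Rightarrow(2)$. For $(2)\Rightarrow(3)$, I define $\ev\colon P\otimes {^*P}\to \uT$ as the unique morphism of heaps induced by the $T$-balanced bilinear assignment $(p,\alpha)\mapsto \alpha(p)$, and $\db\colon\star\to {^*P}\otimes_TP$ by $*\mapsto[\phi_i\otimes_T e_i]_{i=1}^s$. The zigzag identities \eqref{eq:zigzag} then translate into the DBP \eqref{eq:dbp} applied to $p\in P$ and to $\alpha\in{^*P}$ respectively; the latter takes the form $\alpha=[\phi_i\cdot\alpha(e_i)]_i$, which follows by applying $\alpha$ to both sides of \eqref{eq:dbp}. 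For $(3)\Rightarrow(4)$, the natural transformation
\[
\Phi_M\colon {^*P}\otimes_T M\to \lhom{T}{P}{M},\quad \alpha\otimes_T m\mapsto\bigl(p\mapsto \ev(p\otimes\alpha)\cdot m\bigr),
\]
admits as candidate inverse $\Psi_M\colon f\mapsto ({^*P}\otimes_T f)(\db(*))$, and the zigzag identities force $\Phi$ and $\Psi$ to be mutually inverse modulo the unitor isomorphisms.

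For $(4)\Rightarrow(5)$: the functor ${^*P}\otimes_T-$ is a left adjoint (with right adjoint $\lhom{\ahrd}{{^*P}}{-}$, as in Proposition~\ref{prop.adj}) and hence cocontinuous, so the natural isomorphism of (4) transports this property to $\lhom{T}{P}{-}$. For $(5)\Rightarrow(1)$: cocontinuity trivially implies right exactness. To extract finite generation, I would write $P$ as the directed colimit $\varinjlim_\alpha P_\alpha$ of its finitely generated submodules (this family is directed, since every singleton generates a finitely generated submodule and finite unions of finitely generated submodules remain finitely generated). Cocontinuity of $\lhom{T}{P}{-}$ then realises $\id_P$ as an element of $\varinjlim_\alpha\lhom{T}{P}{P_\alpha}$, so it factors as $P\to P_{\alpha_0}\hookrightarrow P$, forcing the inclusion $P_{\alpha_0}\hookrightarrow P$ to be surjective, whence $P=P_{\alpha_0}$ is finitely generated.

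The delicate step is $(1)\Rightarrow(2)$. Finite generation yields a surjection $\pi\colon\Tt^X\to P$ with $X=\{x_1,\dots,x_s\}$ finite, and right exactness applied to the coequalizer presentation of $\pi$ (Proposition~\ref{prop:epicoeq}) produces a $T$-linear section $\sigma\colon P\to \Tt^X$ with $\pi\sigma=\id_P$. To convert $\sigma$ into a dual basis I would reintroduce the natural transformation $\Phi$ defined above and show that $\Phi_{\Tt^X}$ is an isomorphism: $\Phi_\uT$ coincides with the unitor isomorphism of Proposition~\ref{prop:tensunital}, and both ${^*P}\otimes_T-$ and $\lhom{T}{P}{-}$ preserve the finite coproduct $\Tt^X=\boks{x\in X}{}{\uT x}$, the former as a left adjoint and the latter by right exactness. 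Setting $\omega'\coloneqq\bigl({^*P}\otimes_T\pi\bigr)\bigl(\Phi_{\Tt^X}^{-1}(\sigma)\bigr)\in {^*P}\otimes_TP$ and writing it as a heap word $[\phi_j\otimes_T e_j]_{j=1}^{2k+1}$, naturality of $\Phi$ forces $\Phi_P(\omega')=\pi\sigma=\id_P$, which unwinds to $p=[\phi_j(p)\cdot e_j]_j$ for every $p\in P$, i.e., the DBP.

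The main anticipated obstacle lies in carefully verifying that $\Phi_{\Tt^X}$ really is an isomorphism for finite $X$, by identifying the two coproduct-preservation isomorphisms ${^*P}\otimes_T\Tt^X\cong\boks{x\in X}{}{{^*P}}\cong\lhom{T}{P}{\Tt^X}$ and showing that $\Phi_{\Tt^X}$ intertwines them through the base case $\Phi_\uT$. A secondary subtlety is that direct sums of heaps do not admit canonical decompositions of their elements, so one has to accept that the extracted dual basis may have length $2k+1$ strictly larger than $s$ and possibly involve repeated $e_j$'s; the DBP as formulated allows this.
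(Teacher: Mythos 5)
Your proposal is correct, and its skeleton is the same as the paper's: the identical cycle of implications, the same $\ev$ and $\db$ in (2)$\Rightarrow$(3), and the same pair of mutually inverse maps (the paper's $\tau_M$ and $\sigma_M$) in (3)$\Rightarrow$(4), with (4)$\Rightarrow$(5) dispatched identically via adjunction. Two steps differ genuinely. For the finite-generation half of (5)$\Rightarrow$(1) you present $P$ as the filtered colimit of its finitely generated submodules and factor $\id_P$ through one stage; this is the classical argument and it is sound here because filtered colimits of $T$-modules are created on underlying sets (Proposition~\ref{prop:colimits} together with the fact that $\ahrd$ is a variety), though ``finite unions of finitely generated submodules'' should read ``the submodule generated by the union of the generating sets'', since unions of submodules need not be submodules. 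The paper instead applies cocontinuity to the canonical epimorphism $\Tt^{P}\lra P$ from the free module on the \emph{whole} underlying set, identifies $\lhom{T}{P}{\Tt^P}$ with $\boks{p\in P}{}{\lhom{T}{P}{\uT}_p}$, and observes that the preimage of $\id_P$ is a finite word; this buys a dual basis (hence essentially (2)) on the spot, while your version yields only finite generation and defers the dual basis to (1)$\Rightarrow$(2). For (1)$\Rightarrow$(2), after producing the section $\sigma$ the paper reads the $\phi_k$ and $e_k=\pi(1_{i_k})$ straight off the word representation of $\sigma$ in $\boks{i}{}{\lhom{T}{P}{\uT}_i}$, whereas you transport $\sigma$ through $\Phi_{\Tt^X}^{-1}$ and push forward along ${^*P}\ot_T\pi$; this is a correct but more elaborate packaging of the same computation, and the invertibility of $\Phi_{\Tt^X}$ that you flag as the main obstacle does indeed follow from $\Phi_{\uT}$ being the unitor of Proposition~\ref{prop:tensunital} combined with preservation of finite coproducts by both functors. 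One cosmetic remark: exactly as in the paper's own proof, your extraction yields $\phi_j$ with values in $\uT$ rather than in $T$, which matches the convention ${^*P}=\lhom{T}{P}{\uT}$ used throughout rather than the letter of the DBP definition; this is an issue inherited from the paper, not one you introduced.
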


\begin{proof}
\ref{item:fgp1} $\Rightarrow$ \ref{item:fgp2}.  Assume that the functor $\lhom{T}{P}{-}$ preserves finite colimits. Since $P$ is finitely generated, there exist a positive integer $r$ and a $T$-module epimorphism $\pi:\Tt^{\{1,\ldots,r\}} \lra P$. For the sake of clarity and brevity, we denote by $T_i$ the copy of $\uT$ in position $i$ and by $1_i \in T_i$ its unit, for $i=1,\ldots,r$. By Proposition \ref{prop:epicoeq}, $\pi$ is a coequalizer and, by hypothesis, 
\[
\lhom{T}{P}{\pi}:\lhom{T}{P}{\Tt^{\{1,\ldots, r\}}}\lra\lhom{T}{P}{P}
\]
is a coequalizer of the corresponding morphisms, whence an epimorphism in particular. Choose a pre-image in $\lhom{T}{P}{\Tt^{\{1,\ldots, r\}}}$ of $\id_P$ and call it $\sigma$; it satisfies $\pi\circ \sigma = \id_P$.

Moreover, $\Tt^{\{1,\ldots ,r\}} = \boks{i=1}{r}{T_i}$ is the finite colimit with structure maps $$\eta_i:\uT \lra \Tt^{\{1,\ldots,r\}},\ \  z\lto z1_i\in T_i,$$ for all $i=1,\ldots,r$. By hypothesis again, the induced morphism 
\[
\boks{i=1}{r}{\lhom{T}{P}{\eta_i}} : \boks{i=1}{r}{\lhom{T}{P}{\uT}_i} \lra \lhom{T}{P}{\Tt^{\{1,\ldots ,r\}}}
\]
is an isomorphism, where $\lhom{T}{P}{\uT}_i$ denotes the copy of $\lhom{T}{P}{\uT}$ in position $i$. Therefore, there exist elements $\phi_{1},\ldots,\phi_{s} \in \lhom{T}{P}{\uT}$ (possibly $r\neq s$) 
such that
\[
\LL \phi_{1},\ldots,\phi_{s}\RR \in \boks{i=1}{r}{\lhom{T}{P}{\uT}_i}
\]
satisfies
\[
\boks{i=1}{r}{\lhom{T}{P}{\eta_i}}\left(\LL \phi_{1},\ldots,\phi_{s}\RR \right) = \sigma.
\]
Concretely, this amounts to say that, for every $p\in P$,
\[
\begin{aligned}
p & = \pi(\sigma(p)) = \pi\left(\LL \eta_{{i_1}}\circ\phi_{1},\ldots,\eta_{{i_s}}\circ\phi_{s}\RR (p)\right)\\
& = \LL \left(\pi\eta_{{i_1}}\phi_{1}\right)(p),\ldots,\left(\pi\eta_{{i_s}}\phi_{s}\right)(p)\RR  
   = \LL \phi_{1}(p) \cdot \pi(1_{{i_1}}),\ldots,\phi_{s}(p) \cdot \pi(1_{{i_s}})\RR ,
\end{aligned}
\]
where the $i_k$ are such that $\phi_{k} \in \lhom{T}{P}{\uT}_{i_k}$,  $k =1,\ldots,s$.
Set $e_k\coloneqq \pi(1_{i_{k}})\in P$ for $k=1,\ldots,s$. The foregoing relation says that, for every $p\in P$,
\[
p=\LL \phi_{1}(p) \cdot e_{1},\ldots,\phi_{s}(p) \cdot e_{s}\RR .
\]
We conclude that if $P$ is finitely generated and if $\lhom{T}{P}{-}$ preserves finite colimits, then $P$ satisfies the DBP.

\ref{item:fgp2} $\Rightarrow$ \ref{item:fgp3}. Consider the assignment 
\[
e: P\times {^*P} \lra \uT, \qquad (p,\alpha)\lto \alpha(p).
\]
Then
\[
e\left(\left[p,p',p''\right],\alpha\right) = \alpha\left(\left[p,p',p''\right]\right) = \left[\alpha\left(p\right),\alpha\left(p'\right),\alpha\left(p''\right)\right] = \left[e\left(p,\alpha\right),e\left(p',\alpha\right),e\left(p'',\alpha\right)\right]
\]
and
\[
e\left(p,\left[\alpha,\alpha',\alpha''\right]\right) = \left[\alpha,\alpha',\alpha''\right]\left(p\right) = \left[\alpha\left(p\right),\alpha'\left(p\right),\alpha''\left(p\right)\right] = \left[e\left(p,\alpha\right),e\left(p,\alpha'\right),e\left(p,\alpha''\right)\right],
\]
whence there exists a unique heap homomorphism $\ev: P\otimes {^*P}\lra \uT$ such that $\ev(p\otimes \alpha)=\alpha(p)$, for all $p\in P,\alpha\in {^*P}$. Moreover,
\[
\ev(t\cdot p\otimes \alpha)=\alpha(t\cdot p) = t\alpha(p)
\]
and
\[
\ev(p\otimes \alpha\cdot t)=(\alpha\cdot t)(p) = \alpha(p)t,
\]
for all $p\in P,\alpha\in {^*P},t\in T$, whence $\ev$ is $T$-bilinear. Consider also the assignment
\[
\db:\star \lra {^*P}\otimes_T P, \qquad *\lto \LL\phi_k \otimes_T e_{k}\RR_{k=1}^{s}.
\]
A direct check shows that
\[
\begin{gathered}
\left(\left(\ev\otimes_T P\right)\circ \left(P\otimes \db\right)\right)(p) = \left(\ev\otimes_T P\right)\left(\LL p \otimes \phi_k \otimes_T e_{k}\RR_{k=1}^{s}\right) = \LL\phi_k(p) \cdot e_{k}\RR_{k=1}^{s} = p, \\
\left(\left({^*P}\otimes_T \ev\right)\circ\left(\db\otimes {^*P}\right)\right)(\alpha) = \left({^*P}\otimes_T \ev\right)\left(\LL \phi_k\otimes_T e_{k} \otimes \alpha\RR_{k=1}^{s}\right) = \LL \phi_k  \cdot \alpha\left(e_{k}\right)\RR_{k=1}^{s} = \alpha,
\end{gathered}
\]
for all $p\in P$, $\alpha\in {^*P}$.

\ref{item:fgp3} $\Rightarrow$ \ref{item:fgp4}. For every $T$-module $M$, consider 
\[
\tilde{\tau}:{^*P}\times M \lra \lhom{T}{P}{M},\qquad (\alpha,m)\lto \left[p\lto \ev(p\otimes \alpha)\cdot m\right].
\]
For all $p\in P$, $\alpha,\alpha',\alpha''\in {^*P}$, $m,m',m''\in M$, $t\in T$, 
\[
\begin{aligned}
\tilde{\tau}\left(\left[\alpha,\alpha',\alpha''\right],m\right)(p) & = \left[\alpha,\alpha',\alpha''\right](p)\cdot m = \left[\alpha(p),\alpha'(p),\alpha''(p)\right]\cdot m \\
& = \left[\alpha(p)\cdot m,\alpha'(p)\cdot m,\alpha''(p)\cdot m\right] \\ 
& = \left[\tilde{\tau} (\alpha,m)(p),\tilde{\tau} (\alpha',m)(p),\tilde{\tau} (\alpha'',m)(p)\right] \\
& = \left[\tilde{\tau} (\alpha,m),\tilde{\tau} (\alpha',m),\tilde{\tau} (\alpha'',m)\right](p), \\
\tilde{\tau}\left(\alpha,\left[m,m',m''\right]\right)(p) & = \alpha(p)\cdot \left[m,m',m''\right] = \left[\alpha(p)\cdot m,\alpha(p)\cdot m',\alpha(p)\cdot m''\right] \\
 & = \left[\tilde{\tau} (\alpha,m)(p),\tilde{\tau} (\alpha,m')(p),\tilde{\tau} (\alpha,m'')(p)\right] \\
 & = \left[\tilde{\tau} (\alpha,m),\tilde{\tau} (\alpha,m'),\tilde{\tau} (\alpha,m'')\right](p), \\
\tilde{\tau}(\alpha\cdot t,m)(p) & = (\alpha\cdot t)(p)\cdot m = \alpha(p)t\cdot m = \tilde{\tau}(\alpha,t\cdot m)(p).
\end{aligned}
\]
Therefore, there exists a unique heap homomorphism $\tau_M: {^*P}\otimes_T M \lra \lhom{T}{P}{M}$ such that $\tau(\alpha\otimes_T m):p \lto \alpha(p)\cdot m$. The other way around,  write explicitly $\db(*) = \LL\phi_k \otimes_T e_{k}\RR_{k=1}^{s}$ and consider the assignment
\[
\sigma_M: \lhom{T}{P}{M} \lra {^*P}\otimes_T M, \qquad f\lto ({^*P}\otimes_T f)\left(\db(*)\right) = \LL\phi_k \otimes_T f\left(e_{k}\right)\RR_{k=1}^{s}.
\]
A direct computation shows that
\[
\sigma_M\tau_M(\alpha\otimes_T m) = \LL \phi_k \otimes_T \alpha\left(e_{k}\right)\cdot m\RR_{k=1}^{s} = \LL \phi_k \cdot \alpha\left(e_{k}\right)\RR_{k=1}^{s} \otimes_T m = \alpha\otimes_T m,
\]
for all $m\in M$, $\alpha \in {^*P}$, and 
\[
\tau_M\sigma_M(f)(p) = \LL\phi_k(p) \cdot f\left(e_{k}\right)\RR_{k=1}^{s} = f,
\]
whence they are inverses of each other. Furthermore, if $g:M\lra N$ is any $T$-linear map, then
\[
\tau_N\left({^*P}\otimes_T g\right)(\alpha\otimes_T m)(p) = \alpha(p)g(m) = (\lhom{T}{P}{g}\circ\tau_M)(\alpha\otimes_T m)(p),
\]
for all $p\in P$, $m \in M$, $\alpha\in {^*P}$, so that $\tau$ is also natural in $M$.

\ref{item:fgp4} $\Rightarrow$ \ref{item:fgp5}. Obvious, since tensoring by a right $T$-module is a left adjoint functor.

\ref{item:fgp5} $\Rightarrow$ \ref{item:fgp1}. Clearly, $\lhom{T}{P}{-}$ is a right exact functor. Thus, we are left to show that $P$ has to be finitely generated. Since $P$ is a set, we can consider the epimorphism $\pi:\Tt^P \lra P$ uniquely determined by the assignments $T_p \to P, z \lto z\cdot p,$ for all $p \in P$. Since epimorphisms are coequalizers, $\pi_*:\lhom{T}{P}{\Tt^P} \lra \lhom{T}{P}{P}$, $\psi \lto \pi\circ\psi,$ is still a coequalizer (whence an epimorphism), and since $\mathcal{T}^P$ is a small coproduct, 
\[
\lhom{T}{P}{\Tt^P} \cong \boks{p\in P}{}{\lhom{T}{P}{\uT}_p.}
\]
As in the proof of \ref{item:fgp1} $\Rightarrow$ \ref{item:fgp2}, one can consider a pre-image of $\id_P$ via $\pi_*$ and call it $\sigma$. There exist elements $\phi_{1},\ldots,\phi_{s} \in \lhom{T}{P}{\uT}$ such that
\[
\LL \phi_{1},\ldots,\phi_{s}\RR \in \boks{p\in P}{}{\lhom{T}{P}{\uT}_p}
\]
satisfies
\[
\boks{p\in P}{}{\lhom{T}{P}{\eta_p}}\left(\LL \phi_{1},\ldots,\phi_{s}\RR \right) = \sigma.
\]
Concretely, this amounts to say that, for every $q\in P$,
\[
\begin{aligned}
q & = \pi(\sigma(q)) = \pi\left(\LL \eta_{{p_1}}\circ\phi_{1},\ldots,\eta_{{p_s}}\circ\phi_{s}\RR (q)\right) \\
  & = \LL \left(\pi\eta_{{p_1}}\phi_{1}\right)(q),\ldots,\left(\pi\eta_{{p_s}}\phi_{s}\right)(q)\RR  
  = \LL \phi_{1}(q) \cdot \pi(1_{{p_1}}),\ldots,\phi_{s}(q) \cdot \pi(1_{{p_s}})\RR .
\end{aligned}
\]
Set $e_k\coloneqq \pi(1_{{p_k}})\in P$ for $k=1,\ldots,s$. Since the foregoing relation says that for every $p\in P$,
$
p=\LL \phi_{1}(p) \cdot e_{1},\ldots,\phi_{s}(p) \cdot e_{s}\RR,
$
we conclude that the $e_k$ form a finite family of generators of $P$.
\end{proof}

By taking inspiration from \cite[\S5.5]{Kelly} and in light of Theorem \ref{thm:toostrong}, we give the following definition (see also \cite[\S3]{Mitchell}).

\begin{definition}\label{def:tiny}
A $T$-module $P$ satisfying the equivalent conditions of Theorem \ref{thm:toostrong} is called \emph{tiny} (or \emph{small-projective}).
\end{definition}

\begin{remark}
\begin{enumerate}[label=(\alph*),ref=(\alph*),leftmargin=0.8cm]
\item\label{item:rem1} In the proof of the implication \ref{item:fgp1} $\Rightarrow$ \ref{item:fgp2} in Theorem~\ref{thm:toostrong}, there is no need for $s$ to be exactly $r$. 
\item\label{item:rem2} The dual basis map $\db$ does not depend on the choice of the dual basis. In fact, if $\left\{\left(e_{1},\ldots,e_s\right),\left(\phi_1,\ldots,\phi_s\right)\right\}$ and $\left\{\left(f_{1},\ldots,f_r\right),\left(\psi_1,\ldots,\psi_r\right)\right\}$ are two dual bases, then
\[
\begin{aligned}
\LL\phi_k \otimes_T e_{k}\RR_{k=1}^{s} & = \LL\phi_k \otimes_T \LL\psi_h\left(e_{k}\right) \cdot f_{h}\RR_{h=1}^{r}\RR_{k=1}^{s}  \stackrel{\eqref{eq:superbracket}}{=} \LL\LL\phi_k \cdot \psi_h\left(e_{k}\right) \otimes_T f_{h}\RR_{k=1}^{s}\RR_{h=1}^{r} \\
 & = \LL\LL\phi_k \cdot \psi_h\left(e_{k}\right)\RR_{k=1}^{s} \otimes_T f_{h}\RR_{h=1}^{r}  = \LL\psi_h \otimes_T f_{h}\RR_{h=1}^{r}.
\end{aligned}
\]

\item The implication from \ref{item:fgp5} to \ref{item:fgp4} in Theorem~\ref{thm:toostrong} follows also from the Eilenberg-Watts theorem, since $\lhom{T}{P}{-}$ is a heap functor.
\item In the implication from \ref{item:fgp4} to \ref{item:fgp3} in Theorem \ref{thm:toostrong}, the dual basis map $\db$ corresponds to the image of the identity morphism $\id_P$ via the isomorphism $\lhom{T}{P}{P}\cong {^*P}\otimes_T P$.
\item In the present section, we always worked with a left $T$-module $P$, implicitly viewed as a $(T,\star)$-bimodule. Observe that there is nothing particular in considering the distinguished truss $\star$ instead of any other truss. Therefore, the description and the properties of a small-projective $T$-module developed so far can be adapted, with no additional effort, to speak about a $(T,S)$-bimodule which is small-projective over $T$ on the left.
\end{enumerate}
\end{remark}

\begin{example}\label{ex:Rfgp}
If $P$ is a finitely generated and projective module over a ring $R$, then  $\tT(P)$ is a tiny $\tT(R)$-module.
\end{example}

\begin{example}[Free modules are not tiny] \label{ex:tinyfree}
Let $T$ be a unital truss and consider the free $T$-module $T\boxplus T$. Assume, by contradiction, that $T \boxplus T$ admits a dual basis $\left\{\left(e_{1},\ldots,e_s\right),\left(\phi_1,\ldots,\phi_s\right)\right\}$. Denote by $a=1_T$ the unit of the left-hand side copy of $T$ and by $b = 1_T$ the one of the right-hand side copy. Taking advantage of the heap isomorphism \eqref{iso.direct} (see \cite[Proposition 3.9]{BrzRyb:mod}), we may construct the heap homomorphism that ``measures tails''
\[
\ell : T \boxplus T \cong \hH\left(\gG(T,a) \oplus \gG(T,b) \oplus \ZZ \right) \lra \hH(\ZZ).
\]
Notice that, being composition of heap homomorphisms, $\ell$ is not influenced by the reduction of a symmetric word $w$ to one of the ``canonical forms'' $t$, $s$, $tsb$, $sta$, $tsab\cdots ba$, $stba\cdots ab$. Therefore, the ``length of tails'' is well-defined and, in particular, it is not influenced by the action of $T$ (see \cite[Formula (3.6)]{BrzRyb:mod} for a more rigorous formulation). Summing up, for all $z \in T \boxplus T$
\[
\ell\left(\LL \phi_{1}(z) \cdot e_{1},\ldots,\phi_{s}(z) \cdot e_{s}\RR\right) = \LL \ell\left(\phi_{1}(z) \cdot e_{1}\right),\ldots,\ell\left(\phi_{s}(z) \cdot e_{s}\right)\RR = \LL \ell\left(e_{1}\right),\ldots,\ell\left(e_{s}\right)\RR.
\]
However, if we set $m \coloneqq \LL \ell\left(e_{1}\right),\ldots,\ell\left(e_{s}\right)\RR$ (which does not depend on $z$) and we consider $z \coloneqq bab\cdots ab$ with $|m|+1$ instances of $b$, then $\ell(z) = |m|+1$, which is a contradiction.
\end{example}

\begin{example}
Let $T$ be a unital truss admitting a left absorber $a \in T$ and consider $P \coloneqq T \times T \times T$. Set 
\[
\begin{aligned}
e_1 \coloneqq (1_T,a,a), \qquad e_2 \coloneqq (a,1_T,a), \qquad & e_3=(a,a,1_T)\qquad \text{and} \\
\phi_1: T \times T \times T \lra T, \qquad & (x,y,z) \lto x, \\
\phi_2: T \times T \times T \lra T, \qquad & (x,y,z) \lto [a,y,a], \\
\phi_3: T \times T \times T \lra T, \qquad & (x,y,z) \lto z. 
\end{aligned}
\]
Then these form a dual basis for $P$ as a left $T$-module.

Assume furthermore that $a$ is a two-sided absorber. Denote by $S$ the set of all $3 \times 3$ matrices with coefficients in $T$. They inherits an abelian heap structure from the identification $S = T^9$ (that is, the bracket is taken component-wise). Moreover, $S$ admits a truss structure with the row-by-column multiplication
\[
\begin{pmatrix}
t_{1,1} & t_{1,2} & t_{1,3} \\
t_{2,1} & t_{2,2} & t_{2,3} \\
t_{3,1} & t_{3,2} & t_{3,3}
\end{pmatrix} \cdot \begin{pmatrix}
s_{1,1} & s_{1,2} & s_{1,3} \\
s_{2,1} & s_{2,2} & s_{2,3} \\
s_{3,1} & s_{3,2} & s_{3,3}
\end{pmatrix} = \big(r_{i,j}\big)
\quad \text{where} \quad r_{i,j} = \big[t_{i,1}s_{1,j},t_{i,2}s_{2,j},t_{i,3}s_{3,j}\big].
\]
As for matrices over rings, $P$ becomes a right $S$-module with row-by-column action
\[
\begin{pmatrix}
x & y & z
\end{pmatrix}\begin{pmatrix}
t_{1,1} & t_{1,2} & t_{1,3} \\
t_{2,1} & t_{2,2} & t_{2,3} \\
t_{3,1} & t_{3,2} & t_{3,3}
\end{pmatrix} = \begin{pmatrix}
\big[xt_{1,1},yt_{2,1},zt_{3,1}\big] & \big[xt_{1,2},yt_{2,2},zt_{3,2}\big] & \big[xt_{1,3},yt_{2,3},zt_{3,3}\big]
\end{pmatrix},
\]
which makes of it a $T$-$S$-bimodule and
\[
Q = \left\{\left.\begin{pmatrix} x \\ y \\ z \end{pmatrix} ~\right|~ x,y,z\in T\right\}
\]
becomes an $S$-$T$-bimodule analogously. Define the following morphisms
\[
\begin{gathered}
\ev: Q \otimes_T P \lra S, \quad \begin{pmatrix} x \\ y \\ z \end{pmatrix} \otimes_T \begin{pmatrix} x' & y' & z' \end{pmatrix} \lto \begin{pmatrix} x \\ y \\ z \end{pmatrix} \cdot \begin{pmatrix} x' & y' & z' \end{pmatrix} = \begin{pmatrix} xx' & xy' & xz' \\ yx' & yy' & yz' \\ zx' & zy' & zz' \end{pmatrix}, \\
\db: T \lra P \otimes_S Q, \quad 1_T \lto \begin{pmatrix} 1_T & a & a \end{pmatrix} \otimes_S \begin{pmatrix} 1_T \\ a \\ a \end{pmatrix}.
\end{gathered}
\]
They are invertible with inverses explicitly given by

\[
\begin{gathered}
\ev^{-1} \colon \big(t_{i,j}\big) \lto \left[\begin{pmatrix} t_{1,1} \\ t_{1,2} \\ t_{1,3} \end{pmatrix} \otimes_T \begin{pmatrix} 1_T & a & a \end{pmatrix} , \begin{pmatrix} t_{2,1} \\ t_{2,2} \\ t_{2,3} \end{pmatrix} \otimes_T \begin{pmatrix} a & [a,1_T,a] & a \end{pmatrix} , \begin{pmatrix} t_{3,1} \\ t_{3,2} \\ t_{3,3} \end{pmatrix} \otimes_T \begin{pmatrix} a & a & 1_T \end{pmatrix}\right], \\
\text{and} \qquad \db^{-1}: \begin{pmatrix} x & y & z \end{pmatrix} \otimes_S \begin{pmatrix} x' \\ y' \\ z' \end{pmatrix} \lto \begin{pmatrix} x & y & z \end{pmatrix} \cdot \begin{pmatrix} x' \\ y' \\ z' \end{pmatrix} = \big[xx',yy',zz'\big].
\end{gathered}
\]
Therefore, $\lmod{T}$ is equivalent to $\lmod{S}$ by Theorem \ref{thm:Morita}.
\end{example}

\section{Projective modules over a truss}

\subsection{Short exact sequences of modules over a truss}\label{sub.ses}

Let $T$ be a truss (not necessarily unital) and let $\star$ denote the singleton $T$-module. Recall from Section~\ref{sec.truss} that if $(M,\cdot)$ is a non-empty $T$-module and $e\in M$, then we denote by $M^{(e)}=(M,\cdot_e)$ the $T$-module with the induced action
\[
t\cdot_e m = [t\cdot m,t\cdot e,e].
\]
We say that a sequence of non-empty $T$-modules $\xymatrix{M\ar[r]^{f} & N\ar[r]^{g} & P}$  is {\em exact} provided there exists $e\in \im(g)$ such that $\im(f)=\ker_e(g)$ as sets. Notice that, in this case, $\im(f)\cong \ker_{e'}(g)$ as induced submodules for any other $e'\in\im(g)$.

\begin{lemma}\label{lemma:ses}
Let $M,N,P$ be $T$-modules and $f:M \lra N$ and  $g:N \lra P$ be $T$-linear maps. There exist exact sequences 
\begin{equation}\label{eq:3seqs}
\xymatrix{M\ar[r]^f & N \ar[r]^{g} & P}, \quad \xymatrix{\star \ar[r] & M^{(e)} \ar[r]^{f} & N^{(f(e))}} \quad \mbox{and}\quad  \xymatrix{N \ar[r]^{g} & P \ar[r] & \star}
\end{equation}
if and only if 
\begin{blist}
\item 
$f$ is injective and
\item  $N/\im(f)\cong P$ as $T$-modules,
\end{blist}
 where the module structure on $N/\im(f)$ is the one for which the canonical projection $\pi:N\lra N/\im(f)$ is $T$-linear.
\end{lemma}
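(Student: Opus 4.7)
First I would observe that both $f:M^{(e)} \to N^{(f(e))}$ and $g:N \to P$ remain $T$-linear when the induced actions \eqref{induced} are in force, by a direct calculation using the Mal'cev identities, so the three proposed diagrams in \eqref{eq:3seqs} really are sequences in $\pMod{T}$. Moreover, $\im(f) \subseteq N$ is a submodule as the image of a $T$-linear map, hence automatically an induced submodule, so the quotient $N/\im(f)$ carries a canonical $T$-module structure making $\pi$ $T$-linear. The natural reading of (b) (which is the only one that makes the statement non-vacuous, since the maps in sequence~1 must be compatible) is that the isomorphism is realised through $g$, i.e.\ $g$ factors as $\pi$ composed with an isomorphism $\bar{g}:N/\im(f)\to P$.

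For the forward direction, I would unpack each of the three exactness hypotheses in turn. Exactness of $N \to P \to \star$ gives $\im(g) = \ker_*(P \to \star) = P$, so $g$ is surjective. Exactness of $\star \to M^{(e)} \to N^{(f(e))}$ gives some $e' \in \im(f)$ with $\{e\} = \ker_{e'}(f)$; since $e \in \ker_{e'}(f)$ forces $e' = f(e)$, we get $\ker_{f(e)}(f) = \{e\}$, and then using the observation (recalled just before \eqref{e-ker}) that $\sim_{\ker_{f(e)}(f)}$ coincides with the kernel relation $\Ker(f)$ I conclude injectivity of $f$. Finally, exactness of $M \to N \to P$ yields $e'' \in \im(g)$ with $\im(f) = \ker_{e''}(g)$; Lemma~\ref{lem:quomap} applied to $\im(f) \subseteq \ker_{e''}(g)$ produces the $T$-linear factorisation $\bar{g}:N/\im(f) \to P$ of $g$ through $\pi$, and the first isomorphism theorem for $T$-modules (recorded at the end of Section~\ref{sec.truss}) combined with the surjectivity of $g$ shows $\bar{g}$ is an isomorphism, establishing (b).

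For the converse, injectivity of $f$ immediately gives $\ker_{f(e)}(f) = \{e\} = \im(\star \to M^{(e)})$ with $f(e) \in \im(f)$, proving exactness of sequence~2; surjectivity of $\pi$ combined with the bijectivity of $\bar{g}$ makes $g$ surjective, so $\im(g) = P = \ker_*(P \to \star)$ handles sequence~3. For sequence~1, fixing any $m_0 \in M$ and setting $e'' = g(f(m_0)) \in \im(g)$, I note that $\pi$ is constant on $\im(f)$ with value the absorber $\overline{\im(f)}$ of $N/\im(f)$; since $\bar{g}$ is injective, for any $n \in N$ the condition $g(n) = e''$ is equivalent to $\pi(n) = \pi(f(m_0))$, which by \eqref{rel.sub} and the construction of the quotient heap is equivalent to $n \in \im(f)$, giving $\im(f) = \ker_{e''}(g)$.

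The main technical obstacle is the subtle interplay between the set-theoretic kernel $\ker_e(\varphi)$ (which depends on a chosen point) and the kernel relation $\Ker(\varphi)$ (which does not) in the heap/truss setting. One must track carefully which distinguished element is selected each time an exactness datum is produced or consumed, and repeatedly invoke the base-point independence facts collected earlier (different choices of $e$ give isomorphic sub-heaps, and $\sim_{\ker_e(\varphi)} = \Ker(\varphi)$) rather than rely on ring-theoretic intuition where zero plays its special role.
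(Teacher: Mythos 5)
Your proof is correct and follows essentially the same route as the paper: the forward direction extracts injectivity of $f$ from the second sequence and chains $P=\im(g)\cong N/\Ker(g)\cong N/\im(f)$ via the first isomorphism theorem, and the converse exhibits the relevant kernels directly. The one genuine point of divergence is in the converse: the paper only assumes the existence of \emph{some} isomorphism $h:N/\im(f)\lra P$ and then verifies exactness of the sequences with $h\circ\pi$ in place of the given $g$, whereas you strengthen the reading of condition (b) to ``$g$ factors through $\pi$ via an isomorphism $\bar{g}$'' and prove exactness for the original $g$ itself (using injectivity of $\bar{g}$ to identify $\ker_{e''}(g)$ with $\im(f)$). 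Your reading is the one under which the biconditional is literally true for the given maps, so this is a defensible refinement rather than a deviation; the only blemish is the minor overclaim in the forward direction that the image of $\star\lra M^{(e)}$ must be $\{e\}$ --- it is merely some singleton absorber of $M^{(e)}$, but since any singleton sub-heap induces the trivial sub-heap relation, injectivity of $f$ follows all the same.
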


\begin{proof}
Assume the sequences are exact. Then $f$ is injective, by the exactness of the second sequence, and 
$$
P = \im(g) \cong N/\Ker(g) \cong N/\im(f),
$$
where the equality is a restatement of the third sequence, the second isomorphism follows by the exactness of the first sequence, and  the first one is simply the first isomorphism theorem for $T$-modules.

Conversely, assume that there is an isomorphism $h:N/\im(f) \lra P$ of $T$-modules, and denote by $\pi:N\lra N/\im(f)$ the quotient map. In light of \cite[Proposition 4.32]{Brz:par} the sub-heap $\im(f)$ of $N$, as a kernel of $\pi$, admits an additional induced submodule structure. Denote it by $\im(f)^{(e)}\subseteq N^{(e)}$ for a certain $e\in \im(f)$. This entails that $\im(f)$ is a submodule of $N$ with respect to two (in principle, different) $T$-modules structures: $\im(f)\subseteq N$ with respect to the $T$-action for which $f$ is $T$-linear and $\im(f)^{(e)}\subseteq N^{(e)}$ with respect to the induced action coming from the identification $\im(f) = \ker_{\pi(e)}(\pi)$. Since $f$ is injective, we may transport the induced module structure on $M$. Denote it by $M^{(e')}$ for $e'\in M$ such that $f(e')=e$. Consider the sequences
\[
\begin{gathered}
\xymatrix{
\star \ar[r] & M^{(e')} \ar[r]^{f} & N^{(e)}},
\qquad
\xymatrix{M \ar[r]^-{f} & N \ar[r]^{h\circ \pi} & P},
\qquad
\xymatrix{N \ar[r]^-{h\circ\pi} & P \ar[r] & \star}.
\end{gathered}
\]
They are exact.
\end{proof}

By abuse of terminology, we will say that
\[
\xymatrix{
\star \ar@{.>}[r] & M \ar@<+0.3ex>@{.>}[r]^{f} \ar@<-0.3ex>[r] & N \ar[r]^g & P \ar[r] & \star
}
\]
is a short exact sequence of $T$-modules to mean that there exists $e\in M$ such that all three sequences \eqref{eq:3seqs}
are exact.

\begin{proposition}\label{prop:prod}
Let $\phi:M\lra N$ and $\psi:N\lra P$ be morphisms of $T$-modules. Assume that $\psi$ is surjective, that $\phi$ admits a retraction $\gamma$ (in particular, it is injective) and that
\begin{equation}\label{eq:exact1}
\xymatrix{
M \ar[r]^{\phi} & N \ar[r]^{\psi} & P
}
\end{equation}
is exact. Then $N\cong M\times P$ as $T$-modules. We will call such a sequence a \emph{split exact sequence}.
\end{proposition}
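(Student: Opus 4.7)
The plan is to show that the natural candidate
\[
f : N \longrightarrow M \times P, \qquad n \longmapsto \big(\gamma(n),\psi(n)\big),
\]
is a $T$-linear isomorphism. It is a heap homomorphism and is $T$-linear by the component-wise structure of $M \times P$ discussed in Section~\ref{ssec.(co)prod}, since both $\gamma$ and $\psi$ are $T$-module maps. So the whole content of the statement reduces to checking that $f$ is a bijection, and for this I would exploit the Mal'cev identities together with the exactness hypothesis. By the latter, there exists $e \in \im(\psi)$ such that $\im(\phi) = \ker_e(\psi)$, so in particular $\psi(\phi(m)) = e$ for every $m \in M$.

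For injectivity, suppose $f(n)=f(n')$. Pick any $e' \in M$ and compute
\[
\psi\big([n,n',\phi(e')]\big) = [\psi(n),\psi(n'),\psi(\phi(e'))] = [\psi(n),\psi(n),e] = e,
\]
so $[n,n',\phi(e')] \in \ker_e(\psi) = \im(\phi)$, i.e.\ $[n,n',\phi(e')] = \phi(m)$ for some $m \in M$. Applying $\gamma$ and using $\gamma\circ\phi = \id_M$,
\[
m = \gamma\big([n,n',\phi(e')]\big) = [\gamma(n),\gamma(n'),e'] = [\gamma(n),\gamma(n),e'] = e',
\]
so $[n,n',\phi(e')] = \phi(e')$, and then the Mal'cev characterisation of equality recalled in \eqref{equal} yields $n = n'$.

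For surjectivity, given $(m,p) \in M\times P$, use that $\psi$ is surjective to choose $n_0 \in N$ with $\psi(n_0)=p$, and set
\[
n := \big[\phi(m),\phi(\gamma(n_0)),n_0\big].
\]
Then $\gamma(n) = [m,\gamma(n_0),\gamma(n_0)] = m$ and, using $\psi\circ\phi \equiv e$ from exactness,
\[
\psi(n) = [\psi(\phi(m)),\psi(\phi(\gamma(n_0))),\psi(n_0)] = [e,e,p] = p,
\]
so $f(n)=(m,p)$.

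The only genuinely delicate point I foresee is being careful with the way exactness is phrased via the pointed kernel $\ker_e(\psi)$: one has to check that the chosen element $e$ behaves uniformly as the value of $\psi \circ \phi$ on \emph{all} of $M$, and to make sure the intermediate expressions $[n,n',\phi(e')]$ and $[\phi(m),\phi(\gamma(n_0)),n_0]$ are not sensitive to the arbitrary choices of $e'\in M$ and $n_0\in \psi^{-1}(p)$. Both are handled cleanly by the Mal'cev identities and the retraction property $\gamma\circ\phi=\id_M$, so no further technical difficulties are expected.
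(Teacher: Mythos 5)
Your proof is correct, and it takes a genuinely different route from the one in the paper, although both arrive at the same isomorphism $n \mapsto (\gamma(n),\psi(n))$. The paper's argument chooses compatible base points ($e \in P$ with $\ker_e(\psi)=\im(\phi)$, a preimage $e'\in N$ of $e$, and $\gamma(e')\in M$, noting $\phi(\gamma(e'))=e'$), passes to the group retracts $\gG(M;\gamma(e'))$, $\gG(N;e')$, $\gG(P;e)$ via \cite[Lemma 2.1]{Brz:par}, invokes the classical splitting lemma for short exact sequences of $\ZZ$-modules, and then transports the resulting product decomposition back through $\hH$ using the compatibility of $\hH$ with products from \cite[page 8]{BrzRyb:mod}. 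You instead verify bijectivity of the candidate map entirely within the heap formalism: injectivity via the element $[n,n',\phi(e')]$, the identification $\ker_e(\psi)=\im(\phi)$, the retraction identity $\gamma\circ\phi=\id_M$ and the equality criterion \eqref{equal}; surjectivity via the explicit preimage $[\phi(m),\phi(\gamma(n_0)),n_0]$, which (by commutativity of the heap operation) is precisely the formula for $\Phi^{-1}$ recorded after the paper's proof. What the paper's route buys is brevity by outsourcing the splitting to abelian groups; what yours buys is self-containedness, no appeal to external lemmas, and an explicit inverse built into the argument. One point worth making explicit in your write-up: the step ``pick any $e'\in M$'' uses that $M$ is non-empty, which is guaranteed here because exactness in \S\ref{sub.ses} is only defined for sequences of non-empty modules, but it deserves a word since elsewhere in the paper empty modules are admitted.
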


\begin{proof}
Since \eqref{eq:exact1} is exact, there exists $e\in P$ such that $\ker_e(\psi) = \im(\phi)$. Consider $e'\in N$ such that $\psi(e')=e$ and consider $\gamma(e')\in M$.  Since $e' \in \ker_e(\psi) = \im(\phi)$, $\phi(\gamma(e')) = e'$. Denote by $\gG(P;e)$, $\gG(M;\gamma(e'))$ and $\gG(N;e')$ the retracts of the heaps $P$,  $M$ and $N$ respectively. In light of \cite[Lemma 2.1]{Brz:par},  $\phi$ induces an additive map
\[
\widehat{\phi} : \gG(M;\gamma(e')) \lra \gG(N;e'), \qquad m \lto \left[\phi(m),\phi\gamma(e'),e'\right] = \left[\phi(m),e',e'\right] = \phi(m),
\]
and, analogously, $\widehat{\psi} = \psi$ and $\widehat{\gamma}=\gamma$,
which entail that
\[
\xymatrix{
0 \ar[r] & \gG(M;\gamma(e')) \ar[r]^-{\phi} & \gG(N;e') \ar[r]^-{\psi} \ar@/^3ex/[l]^-{\gamma} & \gG(P;e) \ar[r] & 0
}
\]
is a split short exact sequence of $\ZZ$-modules. Thus,
\[
\gG(N;e') \cong \gG(M;\gamma(e')) \oplus \gG(P;e) \cong \gG(M;\gamma(e')) \times \gG(P;e).
\]
From $\gG(N;e') \cong \gG(M;\gamma(e')) \times \gG(P;e)$ and \cite[page 8]{BrzRyb:mod}, it follows that
\[
\begin{aligned}
N = \hH(\gG(N;e'))
 &\cong \hH\left(\gG(M;\gamma(e')) \times \gG(P;e)\right)\\
& \cong \hH\left(\gG(M;\gamma(e'))\right) \times \hH\left(\gG(P;e)\right) = M\times P.
\end{aligned}
\]
Summing up, at the heap level  there is a (unique) isomorphism $N \cong M\times P$ induced by the universal property of the product and explicitly given by
\[
\Phi:N \lra M\times P, \qquad n\lto (\gamma(n),\psi(n)).
\]
By $T$-linearity of $\gamma$ and $\psi$, $\Phi$ is $T$-linear as well.
\end{proof}

For the sake of completeness, we point out that the inverse of $\Phi$ is explicitly given by
\[
\Phi^{-1} : M \times P \lra N, \qquad (m,p) \lto \left[n_p,\phi\gamma(n_p),\phi(m)\right]
\]
where $n_p \in N$ is any element such that $\psi(n_p)=p$.

\begin{corollary}\label{cor:prod}
Let $T$ be a truss and $n\in \NN$. Then for any $k\leq n$ there exists a $T$-module with absorber $M$ such that $T^{k}\times M\cong T^n$.
\end{corollary}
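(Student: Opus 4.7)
The plan is to apply Proposition~\ref{prop:prod} to a split exact sequence whose leftmost term is $T^k$ and whose quotient carries an absorber. To this end, assuming $1\leq k\leq n$, I would define
\[
\iota:T^k\lra T^n,\qquad (t_1,\ldots,t_k)\lto(t_1,\ldots,t_k,t_1,\ldots,t_1),
\]
where the last $n-k$ entries are all copies of $t_1$ (when $k=n$ no padding occurs and $\iota$ is the identity). Since each component of $\iota$ is either a coordinate projection $T^k\to T$ or the first projection $T^k\to T$, the map $\iota$ is $T$-linear by the universal property of the product. Furthermore, the projection $\gamma:T^n\lra T^k$ onto the first $k$ coordinates satisfies $\gamma\circ\iota=\id_{T^k}$, so $\iota$ is injective and admits a retraction.

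Next, I would set $M\coloneqq T^n/\im(\iota)$. Because $\iota$ is a $T$-module morphism, $\im(\iota)$ is both a sub-heap and a $T$-submodule of $T^n$; by \cite[Proposition~4.32(2)]{Brz:par}, as recalled in \S\ref{sec.truss}, the quotient $M$ is therefore a well-defined $T$-module in which the class $e\coloneqq\overline{\im(\iota)}$ is an absorber, and the canonical projection $\psi:T^n\lra M$ is a surjective $T$-linear map.

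Finally, I would verify that the sequence $T^k\xrightarrow{\iota}T^n\xrightarrow{\psi}M$ is exact at $T^n$ in the sense of \S\ref{sub.ses}. Indeed, by a direct computation based on \eqref{rel.sub} together with the Mal'cev identities and the sub-heap property of $\im(\iota)$, the $\sim_{\im(\iota)}$-equivalence class of any element of $\im(\iota)$ coincides with $\im(\iota)$ itself, whence $\ker_e(\psi)=\im(\iota)$. Proposition~\ref{prop:prod} then yields $T^n\cong T^k\times M$, as required. The main obstacle is precisely the construction of $\iota$: since $T$ need not possess an absorber, padding with a constant entry would generally break $T$-linearity, and recycling the first coordinate is the natural $T$-linear workaround that handles every $1\leq k\leq n$ uniformly; the edge case $k=0$ (should the convention on $\NN$ allow it) would collapse to demanding that $T^n$ itself carry an absorber, which need not be the case without extra hypotheses on $T$.
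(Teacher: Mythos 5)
Your proof is correct and follows essentially the same route as the paper: the paper also embeds $T^k$ into $T^n$ by a $T$-linear padding map (it repeats $t_k$ rather than $t_1$, an immaterial difference), retracts via the projection onto the first $k$ coordinates, takes $M=T^n/\im(\phi)$ as the quotient module with absorber, and invokes Proposition~\ref{prop:prod}. Your remark about the degenerate case $k=0$ is a sensible caveat that the paper leaves implicit.
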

\begin{proof}
Observe that $\phi:T^k\to T^n$, given by $(t_1,\ldots,t_k)\lto (t_1,\ldots,t_k,t_k,\ldots,t_k)$, is a $T$-module homomorphism and clearly the sequence
\[
\xymatrix{
 T^k \ar@<0.5ex>[rr]^-{\phi} && T^n \ar[rr]^-{\pi_{\im(\phi)}} \ar@<0.5ex>[ll]^-{\pi_k} && T^n/\im(\phi),
}
\]
where $\pi_k$ is the projection on the first $k$ coordinates, is a split exact sequence. Therefore, by Proposition \ref{prop:prod}, $T^n\cong T^k\times (T^n/\im(\phi))$ and $T^n/\im(\phi)$ is the required module with an absorber.
\end{proof}

\begin{example}
Let $T=2\ZZ+1$ and let us consider $(2\ZZ+1)^3\cong (2\ZZ+1)\times M$ for some $(2\ZZ+1)$-module $M$ as in Corollary \ref{cor:prod}. In this case, $\phi$ is the map given by $2k+1\lto (2k+1,2k+1,2k+1)$ for all $k\in \ZZ$. It is easy to check that $\hH(M)\cong\hH((2\ZZ+1)\times (2\ZZ+1))$ and that the heap isomorphism is a $(2\ZZ+1)$-module homomorphism for the $(2\ZZ+1)$-action given on $(2\ZZ+1)\times (2\ZZ+1)$ by 
$$(2k+1)\cdot(2l+1,2h+1)=(2(2k+1)l+1,2(2k+1)h+1),$$
 for all $k,l,h\in \ZZ$. The desired absorber is $(1,1)$.
\end{example}

\begin{proposition}\label{prop:starsum}
Let $\phi:M\lra N$ and $\psi:N\lra P$ be morphisms of $T$-modules. Assume that $\phi$ is injective, that $\psi$ admits a section $\sigma$ (in particular, it is surjective) and that
\begin{equation}\label{eq:exact}
\xymatrix{
M \ar[r]^{\phi} & N \ar[r]^{\psi} & P
}
\end{equation}
is exact. Then there exists $e'\in M$ yielding an isomorphism of $T$-modules  $N \cong M^{(e')} \times P$, where $M^{(e')}$ denotes the $e'$-induced left $T$-module structure on $M$.
\end{proposition}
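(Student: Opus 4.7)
The strategy is to mirror the proof of Proposition \ref{prop:prod}: extract a basepoint from the splitting data and use the retract functors $\gG(-;-)$ and $\hH$ to recover $N$ as a product. What is new is that the section $\sigma$ (rather than a retraction) now picks the basepoint, and this forces the first factor to carry the $e'$-induced action rather than the original one.

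First I would extract a basepoint. Exactness of \eqref{eq:exact} provides $e \in P$ with $\ker_e(\psi) = \im(\phi)$; since $\psi\sigma(e) = e$, the element $\sigma(e)$ lies in $\ker_e(\psi) = \im(\phi)$, so by injectivity of $\phi$ there is a unique $e' \in M$ with $\phi(e') = \sigma(e)$. Retracting at the basepoints $e'$, $\sigma(e)$, $e$ the sequence \eqref{eq:exact} becomes a short exact sequence of abelian groups split by $\sigma$, whence $\gG(N;\sigma(e)) \cong \gG(M;e') \oplus \gG(P;e)$. Applying $\hH$ produces a heap isomorphism $N \cong M \times P$ realized explicitly by
\[
\Psi : N \lra M \times P, \qquad n \lto (m_n,\psi(n)),
\]
where $m_n \in M$ is uniquely determined by $\phi(m_n) = [n, \sigma\psi(n), \sigma(e)]$ (well defined because $\psi$ sends the bracket to $e$). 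Its two-sided inverse is $(m,p) \lto [\phi(m), \sigma(e), \sigma(p)]$, and heap homomorphicity follows from the transposition rule \eqref{tran} together with the identity $[[a,b,c],c,d] = [a,b,d]$ (a consequence of \eqref{def.heap.con} and the Mal'cev identities).

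The main step, and the only one where something subtle happens, is the verification of $T$-linearity. On the $P$-coordinate it is automatic from the $T$-linearity of $\psi$. On the $M$-coordinate, applying $\phi$ to $t\cdot_{e'} m_n = [t\cdot m_n, t\cdot e', e']$ and expanding with \eqref{induced}, the distributivity of the $T$-action on $N$, and the $T$-linearity of $\sigma$, both $\phi(m_{t\cdot n})$ and $\phi(t\cdot_{e'} m_n)$ reduce to $[t\cdot n, t\cdot \sigma\psi(n), \sigma(e)]$; injectivity of $\phi$ then yields $m_{t\cdot n} = t\cdot_{e'} m_n$. The conceptual ``obstacle'' is understanding \emph{why} the induced action appears: the image $\sigma(e)$ of the section need not be an absorber in $N$, so the naive identity $\phi(t\cdot m_n) = t\cdot \phi(m_n)$ would introduce an extra $t\cdot \sigma(e)$ that the original action on $M$ cannot cancel; the $e'$-induced action is precisely the correction forced by the data, giving the first concrete manifestation of the asymmetry between splitting by retraction (Proposition \ref{prop:prod}) and splitting by section anticipated in the introduction.
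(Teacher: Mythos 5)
Your proof is correct and follows essentially the same route as the paper's: extract the basepoint $e'$ from $\phi(e')=\sigma(e)$, pass to the $e'$-, $\sigma(e)$- and $e$-retracts to obtain a split short exact sequence of abelian groups, apply $\hH$ to get the heap isomorphism, and then verify that $T$-linearity holds precisely for the $e'$-induced action on $M$. The only (immaterial) difference is that you write the isomorphism in the direction $N\lra M\times P$, whereas the paper writes its inverse $\Theta(m,p)=[\phi(m),\sigma(e),\sigma(p)]$; your computation that $\phi(m_{t\cdot n})$ and $\phi(t\cdot_{e'}m_n)$ both equal $[t\cdot n,\,t\cdot\sigma\psi(n),\,\sigma(e)]$ is the same verification carried out on the other side.
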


\begin{proof}
The argument for this proof follows closely that in the proof of Proposition~\ref{prop:prod}. Since \eqref{eq:exact} is exact, there exists $e\in P$ such that $\ker_e(\psi) = \im(\phi)$. Consider $\sigma(e)\in N$ and let $e'\in M$ be the unique element such that 
\begin{equation}\label{eq:sigmae}
\phi(e')=\sigma(e).
\end{equation}
In light of \cite[Lemma 2.1]{Brz:par},  $\sigma$ induces an additive map
\[
\widehat{\sigma} : \gG(P;e) \lra \gG(N;\sigma(e)), \qquad p \lto \left[\sigma(p),\sigma(e),\sigma(e)\right] = \sigma(p),
\]
 and analogously for $\phi$ and $\psi$. These yield the following split short exact sequence of $\ZZ$-modules
\[
\xymatrix{
0 \ar[r] & \gG(M;e') \ar[r]^-{\phi} & \gG(N;\sigma(e)) \ar[r]^-{\psi} & \gG(P;e) \ar[r] \ar@/^3ex/[l]^-{\sigma} & 0.
}
\]
Thus,
\[
\gG(N;\sigma(e)) \cong \gG(M;e') \oplus \gG(P;e) \cong \gG(M;e') \times \gG(P;e).
\]
From $\gG(N;\sigma(e)) \cong \gG(M;e') \times \gG(P;e)$ and \cite[page 8]{BrzRyb:mod}, it follows that
\[
N  =  \hH(\gG(N;\sigma(e)))  \cong  \hH\left(\gG(M;e')\right)  \times  \hH\left(\gG(P;e)\right)  =  M\times P.
\]
Explicitly, this isomorphism is given by the rule
\[
\Theta(m,p) = \left[\phi(m),\sigma(e),\sigma(p)\right].
\]
Now, consider $M$ as a $T$-module with the induced structure $t \cdot_{e'} m = [t \cdot m, t\cdot e',e']$. Then
\[
\begin{aligned}
\Theta(t \cdot_{e'} m, t \cdot p) & \stackrel{\phantom{(6.4)}}{=} \Theta([t \cdot m, t\cdot e',e'], t \cdot p) = \LL \phi(t \cdot m), \phi(t\cdot e'),\phi(e'),\sigma(e),\sigma(t \cdot p) \RR \\ 
 & \stackrel{\eqref{eq:sigmae}}{=} \left[t \cdot \phi(m), t\cdot \phi(e'),t \cdot \sigma(p)\right] = t \cdot \Theta(m,p)
\end{aligned}
\]
and hence it provides an isomorphism $N \cong M^{(e')} \times P$ as claimed.
\end{proof}

\begin{remark}
Let us compute explicitly the projection $N \lra M$ arising from Proposition \ref{prop:prod}. At the level of $\ZZ$-modules,
\[
\gG(N;\sigma(e)) \lra \gG(M;e'), \qquad n \lto \phi^{-1}\left( n - \sigma\psi(n) \right).
\]
By recalling that the module structure is the one induced by the heap structure, we conclude that
\[
n - \sigma\psi(n) = \left[n,\sigma(e),\sigma\psi(n)^{-1}\right] = \left[n,\sigma(e),\left[\sigma(e),\sigma\psi(n),\sigma(e)\right]\right] = \left[n,\sigma\psi(n),\sigma(e)\right].
\]
Therefore, the projection $N\lra M$ is given by $n\lto m_n$, where $m_n\in M$ is the unique element such that $\phi(m_n) = \left[n,\sigma\psi(n),\sigma(e)\right]$. Notice that this is not necessarily $T$-linear if $e$ or $\sigma(e)$ are not absorbers.
\end{remark}

At this point a curious reader may wonder why we introduced the terminology ``split exact sequence'' to refer to \eqref{eq:exact1} and we did not use a more specific one instead, in order to distinguish \eqref{eq:exact1} from \eqref{eq:exact} (such as e.g.\ ``left'' and ``right'' split exact sequences).
The reason is that if $\psi: N \lra P$ admits a section $\sigma:P \lra N$, then $\sigma$ itself admits $\psi$ as a retraction. By applying Proposition~\ref{prop:prod} to the split exact sequence
\[
\xymatrix{
P \ar[r]^-{\sigma} & N \ar[r]^-{\pi} \ar@/^3ex/[l]^-{\psi} & N/P,
}
\]
where $N/P$ is the quotient $T$-module with respect to the submodule $\sigma(P)\subseteq N$ and $\pi$ is the canonical projection, we conclude that $N \cong P \times N/P$ as $T$-modules. Now, a straightforward check shows that $\pi \circ \phi$ is an isomorphism of abelian heaps.

\subsection{Projectivity}

Let $T$ be a truss (not necessarily unital). 
Recall that epimorphisms in $\pMod{T}$ are surjective $T$-linear maps by  Proposition~\ref{prop:episurj}.

\begin{definition}
Let $P$ be a $T$-module. We say that $P$ is \emph{projective} if the functor $\lhom{T}{P}{-}:\lmod{T} \lra \ahrd$ preserves epimorphisms. That is to say, if for every surjective $T$-linear map $\pi:M \lra N$ and every $T$-linear map $f:P\lra N$ there exists a (not necessarily unique) $T$-linear map $\tilde{f}:P\lra M$ such that $\pi\circ\tilde{f} = f$. Diagrammatically,
\[
\xymatrix @R=20pt{
M \ar@{->>}[r]^-{\pi} & N \\
 & P. \ar@{.>}[ul]^-{\tilde{f}} \ar[u]_-{f}
}
\]

\end{definition}

\begin{proposition}\label{prop:tinyproj}
A $T$-module $P$ satisfying the DBP property is projective. In particular, every tiny $T$-module is finitely generated and projective.
\end{proposition}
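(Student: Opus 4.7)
The plan is to use the dual basis directly to construct the required lifting. Suppose $P$ satisfies the DBP with dual basis $\left\{(e_1,\ldots,e_s),(\phi_1,\ldots,\phi_s)\right\}$, let $\pi:M \twoheadrightarrow N$ be a surjective $T$-linear map and let $f:P\lra N$ be any $T$-linear map. For each $i=1,\ldots,s$ pick (using the surjectivity of $\pi$) an element $m_i \in M$ such that $\pi(m_i) = f(e_i)$. Then I would define
\[
\tilde{f}:P \lra M, \qquad p \lto \LL \phi_1(p)\cdot m_1,\ldots,\phi_s(p)\cdot m_s\RR.
\]
The first step is to verify that $\tilde f$ is a morphism of $T$-modules. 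Heap-linearity is immediate from the fact that each $\phi_i$ is a heap homomorphism and from the componentwise description \eqref{eq:qsuperbracket} of the multi-bracket. For $T$-linearity,
\[
\tilde{f}(t\cdot p) = \LL \phi_i(t\cdot p)\cdot m_i\RR_{i=1}^{s} = \LL t\phi_i(p)\cdot m_i\RR_{i=1}^{s} = \LL t\cdot(\phi_i(p)\cdot m_i)\RR_{i=1}^{s} = t\cdot \tilde{f}(p),
\]
by associativity and distributivity of the action.

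The second step is to check that $\pi\circ\tilde{f} = f$. Using the $T$-linearity of $\pi$, the choice of the $m_i$ and the $T$-linearity of $f$, one computes
\[
\pi(\tilde{f}(p)) = \LL \phi_i(p)\cdot \pi(m_i)\RR_{i=1}^{s} = \LL \phi_i(p)\cdot f(e_i)\RR_{i=1}^{s} = f\!\left(\LL \phi_i(p)\cdot e_i\RR_{i=1}^{s}\right) = f(p),
\]
where the last equality is the DBP identity \eqref{eq:dbp} applied inside $f$. This shows that every $T$-linear map out of $P$ lifts along any surjection, which is exactly the projectivity of $P$.

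For the ``in particular'' part, recall that by Theorem~\ref{thm:toostrong} every tiny $T$-module satisfies the DBP, hence is projective by what we just proved. Finite generation is built into the DBP itself: the identity \eqref{eq:dbp} displays every $p\in P$ as a heap-combination of $T$-multiples of the fixed finite family $\{e_1,\ldots,e_s\}$, so this family is a (finite) generating set of $P$ as a $T$-module. There is no real obstacle in this argument; the only subtle point worth emphasising is that the lift $\tilde{f}$ is genuinely not unique in general (the $m_i$ are arbitrary choices among the fibres of $\pi$), which is consistent with the fact that $\lhom{T}{P}{\pi}$ is only required to be surjective, not bijective.
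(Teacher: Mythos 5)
Your proof is correct, but it takes a genuinely different route from the one in the paper. The paper deduces projectivity abstractly: by Proposition~\ref{prop:epicoeq} every epimorphism of $T$-modules is a coequalizer, hence a colimit, and by the equivalence \ref{item:fgp2}$\iff$\ref{item:fgp5} of Theorem~\ref{thm:toostrong} the DBP forces $\lhom{T}{P}{-}$ to be cocontinuous, so it preserves epimorphisms; finite generation of tiny modules is then read off from condition \ref{item:fgp1} of the same theorem. You instead run the classical dual basis argument directly: lift the images $f(e_i)$ through $\pi$ to elements $m_i$ and set $\tilde f(p)=\LL\phi_i(p)\cdot m_i\RR_{i=1}^{s}$. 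All three verifications go through as you state — heap-linearity via \eqref{eq:qsuperbracket} together with \eqref{module2}, $T$-linearity via the left $T$-linearity of the $\phi_i$ and \eqref{module1}, \eqref{module3}, and $\pi\circ\tilde f=f$ via the DBP identity — and your observation that \eqref{eq:dbp} exhibits $\{e_1,\ldots,e_s\}$ as a finite generating family settles finite generation without invoking the equivalences of Theorem~\ref{thm:toostrong} at all. What the paper's route buys is brevity once the heavy machinery (cocontinuity of $\lhom{T}{P}{-}$, effectiveness of epimorphisms) is in place; what your route buys is self-containedness, an explicit formula for the lift, and the fact that it isolates exactly which part of the DBP is used for projectivity — it would survive even in a context where the full chain of equivalences in Theorem~\ref{thm:toostrong} were not available. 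Your closing remark that the lift is not unique is also accurate and consistent with $\lhom{T}{P}{\pi}$ being merely surjective.
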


\begin{proof}
In view of Proposition \ref{prop:epicoeq}, every epimorphism is a coequalizer. In particular, it is a colimit. By Theorem \ref{thm:toostrong}\ref{item:fgp5}, $\lhom{T}{P}{-}:\lmod{T} \lra \ahrd$ is cocontinuous, and so  it preserves small colimits and, in particular, epimorphisms. The last claim is a consequence of Theorem \ref{thm:toostrong}\ref{item:fgp1}.
\end{proof}

\begin{remark}
Proposition \ref{prop:tinyproj} should convince the reader that the terminology ``small-projective'' from Definition \ref{def:tiny} would also be very well-suited for tiny objects in $\pMod{T}$.
\end{remark}

\begin{lemma}\label{lem:what?!}
Every projective $T$-module $P$ admits a $T$-linear morphism $f:P \lra T$.
\end{lemma}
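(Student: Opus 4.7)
The plan is to mimic the familiar ring-theoretic argument: realise $P$ as a retract of a free $T$-module and compose that retraction with a $T$-linear map from the free module to $T$. A minor subtlety is that the free $T$-module on a set $X$ is built from the unital extension (as $\Tt^X = \boxplus_{x\in X}\uT x$), so one has to check that the projection onto the coefficient truss can be arranged to land in $T$ itself rather than only in $\uT$.

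If $P=\varnothing$ the empty function to $T$ is vacuously $T$-linear, so there is nothing to prove; assume $P$ non-empty (and, implicitly, $T$ non-empty, which is necessary for the conclusion to have any content). First I would form the free $T$-module $\Tt^P$ on the underlying set of $P$ together with the canonical epimorphism $\epsilon_P\colon\Tt^P\twoheadrightarrow P$, namely the component at $P$ of the counit of the free--forgetful adjunction described right after Proposition~\ref{prop:free}. Projectivity of $P$ applied to $\epsilon_P$ and to $\id_P$ yields a $T$-linear section $\sigma\colon P\to\Tt^P$ with $\epsilon_P\circ\sigma=\id_P$. Next, fix any element $t_0\in T$ and use the universal property of the free $T$-module to extend the constant set-function $P\to T,\ p\lto t_0,$ to a unique $T$-linear map $\tau\colon\Tt^P\to T$. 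Concretely, $\tau$ sends each generator $*p$ to $t_0$, and hence $zp\in\uT p$ to $z\cdot t_0$. This element lies in $T$: if $z={*}$ then $*\cdot t_0=t_0\in T$ by definition of $\uT$, if $z\in T$ then $zt_0\in T$ by closure under multiplication, and for a general $z\in\uT$ which is a heap combination of elements of $T\sqcup\{*\}$ the product $z\cdot t_0$ is a heap combination of elements of $T$. Thus the composite $f\coloneqq \tau\circ\sigma\colon P\to T$ is the desired $T$-linear morphism.

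There is no substantial obstacle here beyond the check in the previous paragraph that $\tau$ factors through $T\subseteq\uT$, which is an immediate consequence of the defining relations of the unital extension. Everything else is a routine application of projectivity against the canonical free cover together with the universal property of $\Tt^P$.
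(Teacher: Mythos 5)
Your proof is correct, but it takes a noticeably longer route than the one in the paper. The paper's argument is a one-liner: since $T$ is non-empty, the unique $T$-linear map $T\lra\star$ is surjective, hence an epimorphism, and projectivity of $P$ applied to the (automatically commuting) square with the unique map $P\lra\star$ produces the desired filler $f:P\lra T$ directly --- no free modules, no sections, no universal properties beyond terminality of $\star$. Your version instead realises $P$ as a retract of the free cover $\Tt^P$ and then maps $\Tt^P$ to $T$ by extending a constant function; this works, and it is the natural ring-theoretic reflex, but it imports the whole free-module apparatus for a statement that only needs the epimorphism $T\twoheadrightarrow\star$. Two small remarks: (i) your worry about $\tau$ landing in $T$ rather than $\uT$ is superfluous, since the universal property of $\Tt^P$ is stated for an arbitrary $T$-module as codomain and $T$ itself is one, so $\tau$ has codomain $T$ by construction (your explicit verification that $z\cdot t_0\in T$ for $z\in\uT$ is just the unital $\uT$-module structure on $T$ from Theorem~\ref{thm:unital}); (ii) both proofs tacitly require $T\neq\varnothing$, and you are right to flag it --- the paper's own proof needs it too, for $T\lra\star$ to be surjective.
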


\begin{proof}
The required morphism is a filler of the following diagram \begin{equation*}
\begin{gathered}
\xymatrix @R=15pt{
T \ar[r] & \star \\
 & P. \ar[u] \ar@{.>}[ul]^-{f}
}
\end{gathered} \qedhere
\end{equation*}
\end{proof}

\begin{proposition}\label{prop:freeproj}
Every free $T$-module is projective.
\end{proposition}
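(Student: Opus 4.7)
The plan is to invoke the universal property of free $T$-modules established in the discussion following Proposition~\ref{prop:free}, namely the adjunction $\uT\otimes\Aa(-) \dashv \Ff$, which exhibits $\Tt^X$ as the free $T$-module on the set $X$ with structure map $\iota_X: X\lra \Tt^X$, $x\lto *x$.

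First I would dispatch the degenerate case $X = \varnothing$, for which $\Tt^X = \varnothing$: given any surjection $\pi: M\lra N$ and any $T$-linear $f:\varnothing \lra N$, we can only have $N = \varnothing$ forcing $M = \varnothing$ as well (since there are no $T$-linear maps from a non-empty to the empty module), so that $\tilde{f}$ is the empty map. Otherwise, fix a non-empty set $X$ and a $T$-linear epimorphism $\pi:M\lra N$ together with a $T$-linear map $f:\Tt^X \lra N$. By Proposition~\ref{prop:episurj}, $\pi$ is a surjection of sets. Hence, using the axiom of choice, for every $x \in X$ I can select an element $m_x \in M$ such that $\pi(m_x) = f(\iota_X(x))$, producing a function $g:X \lra M$, $x \lto m_x$.

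Next I would apply the universal property of $\Tt^X$ to $g$: there exists a unique $T$-linear morphism $\tilde{f}:\Tt^X \lra M$ such that $\tilde{f}\circ \iota_X = g$. The composition $\pi\circ \tilde{f}$ is a $T$-linear map $\Tt^X \lra N$ satisfying
\[
(\pi\circ \tilde{f}) \circ \iota_X = \pi \circ g = f\circ \iota_X,
\]
by the definition of $g$. Since $f$ also satisfies $f\circ \iota_X = f\circ \iota_X$, the uniqueness clause in the universal property forces $\pi \circ \tilde{f} = f$, which shows that $\Tt^X$ is projective.

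I do not expect any genuine obstacle here: the argument is the standard ``lift the generators'' proof used for free modules over rings, transposed to the setting of trusses via the free/forgetful adjunction of Proposition~\ref{prop:free} and its subsequent corollary. The only mild subtleties are the empty-module edge case (handled at the start) and the invocation of the axiom of choice when $X$ is infinite, both of which are routine.
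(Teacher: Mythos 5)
Your proof is correct and follows essentially the same route as the paper's: lift the images of the generators through the surjection $\pi$ using the axiom of choice, extend the resulting function $X\lra M$ by the universal property of $\Tt^X$, and invoke the uniqueness clause to conclude $\pi\circ\tilde{f}=f$. The only blemish is in your empty-case aside: a $T$-linear map $f:\varnothing\lra N$ exists for \emph{every} $N$ (it is the empty map, $\varnothing$ being initial), so $N=\varnothing$ is not forced; nevertheless the required lift is still the empty map $\varnothing\lra M$, so the conclusion stands.
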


\begin{proof}
Let $X$ be any set, $\pi:M \lra N$ be a surjective $T$-linear map and $f:\Tt^X \lra N$ a $T$-linear map. Consider also the inclusion  $\iota_X : X \lra \Tt^X, x\lto *x$. For every $x\in X$ set $n_x\coloneqq f(\iota_X(x))\in N$. Since $\pi$ is surjective, by the axiom of choice, for every $x\in X$, we may choose  an $m_x \in M$ such that $\pi(m_x) = n_x$. This defines a function $\bar{f}:X \lra M,\  x\lto m_x$. By the universal property of the free $T$-module, the latter extends uniquely to a $T$-linear map $\hat{f}:\Tt^X \lra M$ which satisfies  $\pi(\hat{f}(* x)) = \pi(m_x) = n_x = f(* x)$. Since this implies that $f\circ \iota_X$ and $\pi\circ\hat{f}\circ\iota_X$ coincide, the uniqueness ensured by the universal property of the free $T$-modules entails that $f= \pi\circ\hat{f}$ as desired. 
\end{proof}

\begin{corollary}\label{cor:noabs}
Let $T$ be a truss without absorbers. Then any $T$-module with absorber cannot be projective. In particular, free $T$-modules over a truss without absorbers cannot have absorbers.
\end{corollary}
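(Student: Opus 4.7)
The plan is to combine Lemma~\ref{lem:what?!} with the defining property of an absorber. Suppose, towards a contradiction, that $P$ is a projective $T$-module possessing an absorber $e \in P$. By Lemma~\ref{lem:what?!}, projectivity guarantees the existence of some $T$-linear map $f : P \lra T$. I would then evaluate $f$ at $e$ and exploit $T$-linearity: for every $t \in T$,
\[
t \cdot f(e) = f(t \cdot e) = f(e),
\]
where the last equality uses that $e$ is an absorber in $P$. This identity expresses precisely that $f(e)$ is an absorber of $T$ viewed as a left module over itself, contradicting the hypothesis that $T$ admits no absorbers.

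For the ``in particular'' clause, I would invoke Proposition~\ref{prop:freeproj}, which asserts that every free $T$-module is projective. Thus, if a free $T$-module over a truss without absorbers possessed an absorber, it would furnish a projective module with an absorber, contradicting the first part of the statement.

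The argument involves no real obstacle: the content is essentially a one-line application of Lemma~\ref{lem:what?!} together with the transport of the absorber property along a $T$-linear map. The only care required is to notice that ``absorber'' in $P$ (a module condition) transfers to an absorber in $T$ (seen as a regular left module over itself), which matches exactly the standing hypothesis on $T$.
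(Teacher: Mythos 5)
Your proof is correct and follows exactly the paper's argument: the paper's one-line justification ``$T$-linear maps preserve absorbers'' is precisely your computation $t\cdot f(e) = f(t\cdot e) = f(e)$ applied to the map furnished by Lemma~\ref{lem:what?!}, and the ``in particular'' clause is handled identically via Proposition~\ref{prop:freeproj}.
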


\begin{proof}
Since $T$-linear maps preserve absorbers, a projective $T$-module $P$ cannot have absorbers in view of Lemma \ref{lem:what?!}. In particular, Proposition \ref{prop:freeproj} entails that free modules over a truss without absorbers cannot have absorbers.
\end{proof}

\begin{remark}
A truss $T$ has no absorbers if and only if there exists a non-empty $T$-module without absorbers. In fact, if $T$ admits an absorber $e$ then for every non-empty $T$-module $M$ and $m \in M$, $e\cdot m$ is an absorber in $M$. Conversely, $T$ itself is a $T$-module without absorbers. More precisely, a truss $T$ admits an absorber if and only if there exists a projective $T$-module admitting an absorber. Therefore, the hypothesis of Corollary \ref{cor:noabs} is not particularly restrictive.
\end{remark}

\begin{proposition}\label{prop:Absproj}
Let $R$ be a ring and $\tT(R)$ be the associated truss as in \S\ref{ssec.ringtype}. 
\begin{enumerate}[label=(\arabic*),ref=(\arabic*),leftmargin=0.8cm]
\item\label{Absproj:item1} If $P$ is projective over $\tT(R)$ then $P_{\Abs}$ is projective over $R$. 
\item\label{Absproj:item2} If $P$ is finitely generated over $\tT(R)$ then $P_{\Abs}$ is finitely generated over $R$.
\end{enumerate}
In particular, 
\begin{enumerate}[resume*]
\item\label{Absproj:item3} If $P$ is a tiny $\tT(R)$-module then $P_\Abs$ is a finitely generated and projective $R$-module.
\item\label{Absproj:item4} $P$ is a finitely generated and projective $R$-module if and only if $\tT(P)$ is a tiny $\tT(R)$-module.
\end{enumerate}
\end{proposition}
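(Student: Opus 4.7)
The natural approach is to exploit systematically the adjunction $(-)_{\Abs}\dashv\tT$ between $\pMod{\tT(R)}$ and $\pMod{R}$ recalled in Subsection~\ref{ssec.ringtype}. Since $(-)_{\Abs}$ is a left adjoint it preserves colimits (in particular, coproducts and epimorphisms by Proposition~\ref{prop:epicoeq}), and since $\tT$ is a right adjoint that happens to be fully faithful via an isomorphism counit, many properties of an $R$-module $P_{\Abs}$ can be deduced from properties of $P$ by translating back and forth through the hom--tensor adjunction. The plan is to prove (1) and (2) by direct adjunction-style manipulations, and then obtain (3) as a formal consequence of (1), (2) and Proposition~\ref{prop:tinyproj}. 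Finally, (4) will follow by pairing Example~\ref{ex:Rfgp} with (3) and the counit isomorphism $\tT(P)_{\Abs}\cong P$.

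\textbf{Proof of (1).} First I would observe that $\tT$ sends surjective $R$-linear maps to surjective $\tT(R)$-linear maps, since it leaves the underlying sets and functions unchanged; by Proposition~\ref{prop:episurj} this means $\tT$ preserves epimorphisms. Given an epimorphism $\pi:M\lra N$ in $\pMod{R}$ and any $R$-linear $f:P_{\Abs}\lra N$, one uses the adjunction isomorphism
\[
\lhom{R}{P_{\Abs}}{-}\;\cong\;\lhom{\tT(R)}{P}{\tT(-)}
\]
to transport $f$ to a $\tT(R)$-linear map $\hat f:P\lra \tT(N)$, lifts $\hat f$ through $\tT(\pi)$ by projectivity of $P$, and transports the lift back via the adjunction. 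Naturality of the adjunction isomorphism guarantees that the resulting $R$-linear map does lift $f$, hence $P_{\Abs}$ is projective.

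\textbf{Proof of (2), (3), (4).} For (2), write a surjection $\pi:\Tt^{n}\lra P$ from a free $\tT(R)$-module on a finite set, which exists by the assumption that $P$ is finitely generated. Applying the left adjoint $(-)_{\Abs}$ yields an epimorphism $(\Tt^{n})_{\Abs}\lra P_{\Abs}$ in $\pMod{R}$. Using that $(-)_{\Abs}$ preserves coproducts and the identity $\tT(R)_{\Abs}\cong R$ (which follows from $\Abs(\tT(R))=\{0\}$, or more cleanly from the fact that the counit $\eps_R:\tT(R)_{\Abs}\lra R$ is an isomorphism), one identifies $(\Tt^{n})_{\Abs}\cong R^{n}$, whence $P_{\Abs}$ is finitely generated over $R$. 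Item (3) follows immediately: a tiny $\tT(R)$-module is finitely generated and projective by Proposition~\ref{prop:tinyproj}, and (1) together with (2) then transfer both properties to $P_{\Abs}$. Finally, for (4), one direction is exactly Example~\ref{ex:Rfgp}, and the converse follows by applying (3) to $\tT(P)$ and using the natural isomorphism $\tT(P)_{\Abs}\cong P$ to conclude that $P$ is finitely generated and projective over $R$.

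\textbf{Anticipated obstacle.} The only subtle point I foresee is the verification that $(\Tt^{n})_{\Abs}\cong R^{n}$, which rests on the interplay between the coproduct $\boxplus$ in $\pMod{\tT(R)}$ and the direct sum in $\pMod{R}$. While it is formally guaranteed by the preservation of colimits under left adjoints, care must be taken with the edge cases (the empty module and the singleton $\star$, for which $\star_{\Abs}$ is the zero $R$-module) to ensure no degeneracy spoils the identification. Apart from this, every step is a routine application of the adjunction $(-)_{\Abs}\dashv\tT$ and the characterisation of projectivity in terms of lifting.
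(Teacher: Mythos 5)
Your proposal is correct and follows essentially the same route as the paper: item (1) via the adjunction $(-)_{\Abs}\dashv\tT$ (the paper writes out the unit/counit diagrams and the triangle identity where you invoke naturality of the hom-set bijection, which is the same argument), item (2) via preservation of epimorphisms and coproducts by the left adjoint together with $\tT(R)_{\Abs}\cong R$, and items (3)--(4) as formal consequences of (1), (2), Proposition~\ref{prop:tinyproj}, Example~\ref{ex:Rfgp} and the counit isomorphism. The edge cases you flag do not actually arise, since only finite coproducts of copies of $\tT(R)$ are involved.
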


\begin{proof}
To prove \ref{Absproj:item1}, let $\pi:M \lra N$ be a surjective morphism of $R$-modules and assume that $f: P_{\Abs} \lra N$ is an $R$-linear map. Since the action of the functor $\tT$ on morphisms does not change the underlying mapping (see \S\ref{ssec.ringtype}) and since epimorphisms in $\pMod{\tT(R)}$ and $\pMod{R}$ are exactly surjective maps (see Proposition \ref{prop:episurj}), the functor $\tT$ preserves epimorphisms,  and hence we can consider the diagram of $\tT(R)$-modules
\[
\xymatrix @R=20pt{
\tT(M) \ar@{->>}[r]^-{\tT(\pi)} & \tT(N) \\
P \ar[r]_-{\eta_P} & \tT(P_{\Abs}), \ar[u]_-{\tT(f)}
}
\]
where $\eta_P : P \lra \tT(P_{\Abs})$ is the unit of the adjunction $(-)_{\Abs} \dashv \tT$. Since $P$ is projective over $\tT(R)$, there exists a filler $f'$ rendering the following diagram commutative:
\[
\xymatrix @R=20pt{
\tT(M) \ar@{->>}[r]^-{\tT(\pi)} & \tT(N) \\
P \ar@{.>}[u]^-{f'} \ar[r]_-{\eta_P} & \tT(P_{\Abs}). \ar[u]_-{\tT(f)}
}
\]
By applying the functor $(-)_{\Abs}$ to the latter diagram, we find the commutative diagram
\[
\xymatrix @R=20pt{
M \ar@{->>}[r]^{\pi} & N & \\
\tT(M)_{\Abs} \ar[u]^-{\eps_M}_{\cong} \ar@{->>}[r]^-{\tT(\pi)_{\Abs}} & \tT(N)_{\Abs} \ar[u]_-{\eps_N}^-{\cong} & P_{\Abs} \ar[ul]_-{f} \\
P_{\Abs} \ar@{.>}[u]^-{f'_{\Abs}} \ar[r]_-{(\eta_P)_{\Abs}} & \tT(P_{\Abs})_{\Abs} \ar[u]_-{\tT(f)_{\Abs}} \ar@/_2ex/[ur]_-{\eps_{P_{\Abs}}}
}
\]
and since $\eps_{P_{\Abs}} \circ (\eta_P)_{\Abs} = \id_P$, we constructed a morphism of $R$-modules $\hat{f} \coloneqq  \eps_M \circ f'_{\Abs} : P_{\Abs} \lra M$ such that $\pi \circ \hat{f} = f$.

To prove \ref{Absproj:item2}, pick an epimorphism $\pi : \boks{}{n}{\tT(R)} \lra P$. Since $(-)_\Abs$ is a left adjoint functor (see \S\ref{ssec.ringtype}) and every epimorphism in $\pMod{\tT(R)}$ is a coequalizer (see Proposition~\ref{prop:epicoeq}), $(-)_\Abs$ preserves epimorphisms and coproducts, and hence
\[
R^n \cong \bigoplus^n \tT(R)_\Abs \cong \left(\boks{}{n}{\tT(R)}\right) _\Abs \xrightarrow{\pi_\Abs} P_\Abs
\]
is an epimorphism of $R$-modules, showing that $P_\Abs$ is finitely generated.

Concerning the last claims, assume that $P$ is tiny over $\tT(R)$. Then it is finitely generated and projective by Proposition \ref{prop:tinyproj}, and hence $P_\Abs$ is finitely generated and projective over $R$, proving \ref{Absproj:item3}. Furthermore, in view of Example \ref{ex:Rfgp} we know that if $P$ is finitely generated and projective over $R$, then $\tT(P)$ is tiny over $\tT(R)$. Conversely, we have just seen that if $\tT(P)$ is tiny over $\tT(R)$, then $\tT(P)_\Abs \cong P$ is finitely generated and projective over $R$, thus showing \ref{Absproj:item4}.
\end{proof}

\begin{lemma}
The empty $T$-module is projective.
\end{lemma}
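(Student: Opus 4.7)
The plan is to verify directly from the definition that the lifting property holds vacuously. For every $T$-module $N$, the hom-set $\lhom{T}{\varnothing}{N}$ consists of a single element, namely the empty map $\varnothing \lra N$, simply because a function on an empty domain is uniquely determined (and the conditions for being a heap/$T$-module homomorphism are vacuous). In particular, $\lhom{T}{\varnothing}{-}$ is (naturally isomorphic to) the constant functor at $\star$, which trivially preserves every morphism, epimorphisms included.

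More explicitly, given any surjective $T$-linear map $\pi : M \lra N$ and any $T$-linear map $f : \varnothing \lra N$ (which must be the empty map), I would choose $\tilde{f} : \varnothing \lra M$ to be the (unique) empty map. Then $\pi \circ \tilde{f}$ is a morphism $\varnothing \lra N$ and, again by uniqueness, it coincides with $f$. Hence the required filler exists and $\varnothing$ is projective.

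There is no genuine obstacle here: the entire content of the statement is that the empty map can always be lifted, which holds because the empty domain leaves nothing to check. The only subtlety worth flagging is the verification that the empty map is indeed a $T$-module homomorphism (which is immediate, since the axioms of Definition~\ref{def.balanced}-style conditions and the action compatibility \eqref{module1}--\eqref{module3} quantify over elements of the empty set).
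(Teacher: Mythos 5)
Your argument is correct and follows the same route as the paper: the unique morphism out of $\varnothing$ is the empty map, so the lift exists and the triangle commutes by uniqueness. Nothing further is needed.
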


\begin{proof}
For every $T$-module $M$, there exists a unique morphism $\varnothing \lra M$ which is the empty morphism. Therefore, the following diagram is commutative and gives a lifting of the empty morphism along the epimorphism $\pi$:
\begin{equation*}
\begin{gathered}
\xymatrix @R=18pt{
M \ar@{->>}[r]^-{\pi} & N \\
 & \varnothing \ar@{.>}[ul]^-{\varnothing} \ar[u]_-{\varnothing}
}
\end{gathered}\qedhere
\end{equation*}
\end{proof}

\begin{proposition}\label{prop:projfact}
Let $M$ be a non-empty projective $T$-module. Then $M$ is a direct factor of a free $T$-module. More precisely, there exist a set $X$ and a $T$-module with absorber $P$ such that $M \times P \cong \Tt^X$ as $T$-modules.
\end{proposition}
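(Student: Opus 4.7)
The plan is to combine the projectivity hypothesis with the characterization of split exact sequences provided by Proposition~\ref{prop:prod}. Concretely, I would realize $M$ as the image of a canonical epimorphism from a free $T$-module, split this epimorphism using projectivity, and then invoke Proposition~\ref{prop:prod} to recognize the free module as a product $M\times P$ with $P$ a quotient carrying an absorber.

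First, take $X$ to be the underlying set of $M$ and consider the canonical $T$-linear morphism $\pi:\Tt^{X}\lra M$ obtained, via the universal property of the free $T$-module, as the unique $T$-linear extension of $\id_M : X \to M$ (this is the counit of the adjunction $\Tt \dashv \Ff$ evaluated at $M$, appearing just after the corollary of Proposition~\ref{prop:free}). By construction $\pi$ is surjective. Since $M$ is projective, the identity $\id_M$ lifts through $\pi$ to a $T$-linear section $\sigma:M\lra \Tt^{X}$ with $\pi\circ \sigma=\id_M$; in particular $\sigma$ is injective and admits $\pi$ as a retraction.

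Next, I would form the submodule $\im(\sigma)\subseteq \Tt^X$ (which is a genuine submodule because $\sigma$ is $T$-linear) and set $P \coloneqq \Tt^{X}/\im(\sigma)$ with canonical projection $q:\Tt^{X}\lra P$. Since $\im(\sigma)$ is a submodule of $\Tt^X$, the class $\overline{\sigma(m)}=\im(\sigma)$ is an absorber of $P$ (because $t\cdot\sigma(m)\in\im(\sigma)$ for every $t\in T$), so $P$ is indeed a $T$-module with absorber. By standard properties of the quotient described in \S\ref{sec.truss}, $\im(\sigma)=\ker_{\overline{\sigma(m)}}(q)$, which means that the sequence
\[
\xymatrix{M \ar[r]^-{\sigma} & \Tt^{X} \ar[r]^-{q} & P}
\]
is exact. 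Now Proposition~\ref{prop:prod} applies verbatim: $\sigma$ has a retraction $\pi$, $q$ is surjective, and the sequence is exact, hence $\Tt^{X}\cong M\times P$ as $T$-modules.

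There is no serious obstacle here, the proof being a transparent adaptation of the classical argument; the only non-obvious point, which I would verify carefully, is the step where one needs that $\im(\sigma)$ is an actual submodule (not merely an induced submodule) in order to guarantee that the quotient $P$ has an absorber. This is immediate from the $T$-linearity of $\sigma$, but it is crucial, because it is exactly the feature that forces the complementary factor $P$ to be a module with absorber, as claimed in the statement.
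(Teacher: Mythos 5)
Your proof is correct and follows essentially the same route as the paper's: realize $M$ as a quotient of a free module $\Tt^X$, use projectivity to split the epimorphism, quotient $\Tt^X$ by the image of the resulting section to obtain a module with absorber, and conclude via Proposition~\ref{prop:prod}. The only cosmetic difference is that you fix $X=\Ff(M)$ and the counit epimorphism explicitly, while the paper simply invokes the general fact that every $T$-module is a quotient of a free one; your closing remark about $\im(\sigma)$ being a genuine submodule (hence an absorber class in the quotient) is exactly the point the paper relies on as well.
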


\begin{proof}
Since every $T$-module is a quotient of a free one (as is shown at the end of \S\ref{ssec.functorial}), there exists a set $X$ and a surjective $T$-linear morphism $\gamma : \Tt^X \lra M$. By projectivity of $M$, $\gamma$ admits a $T$-linear section $\phi: M \lra \Tt^X$. Thus, $\phi$ is injective and we may identify $M$ with the $T$-submodule $\phi(M) \subseteq \Tt^X$. Consider now
\[
P \coloneqq {\Tt^X}\slash{\sim_{\phi(M)}} \cong {\Tt^X}\slash{M},
\]
which is a $T$-module with absorber. Denote by $\psi : \Tt^X \lra P$ the quotient map. As $M$ is non-empty, the sequence
\[
\xymatrix{
M \ar[r]^-{\phi} & \Tt^X \ar[r]^-{\psi} \ar@/^3ex/[l]^-{\gamma} & P
}
\]
is split exact with $\psi$ surjective and so, by Proposition \ref{prop:prod}, $\Tt^X \cong M \times P$ as $T$-modules.
\end{proof}

The converse of Proposition \ref{prop:projfact} holds as well.

\begin{proposition}\label{prop:factproj}
Let $M$ be a $T$-module. If there exists a $T$-module $P$ with absorber and a set $X$ such that $\Tt^X \cong M \times P$, then $M$ is projective.
\end{proposition}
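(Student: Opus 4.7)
The plan is to show that $M$ is a retract (in the category of $T$-modules) of the free module $\Tt^X$ and then invoke Proposition~\ref{prop:freeproj} together with the standard fact that retracts of projective objects are projective.

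First, I would use the absorber $e_P \in P$ to construct a $T$-linear section of the canonical projection $p_M : M \times P \lra M$. Namely, define $\iota_M : M \lra M \times P$ by $m \lto (m,e_P)$. This is a heap homomorphism because the heap operation on $M \times P$ is component-wise, and it is $T$-linear precisely because $e_P$ is an absorber: for all $t\in T, m\in M$,
\[
t\cdot \iota_M(m) = t\cdot(m,e_P) = (t\cdot m, t\cdot e_P) = (t\cdot m, e_P) = \iota_M(t\cdot m).
\]
Clearly $p_M \circ \iota_M = \id_M$. Composing with the given isomorphism $\varphi : \Tt^X \lra M\times P$, we obtain $T$-linear maps $s \coloneqq \varphi^{-1}\circ \iota_M : M \lra \Tt^X$ and $r \coloneqq p_M \circ \varphi : \Tt^X \lra M$ such that $r\circ s = \id_M$.

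Next, to verify projectivity of $M$, I take an arbitrary surjective $T$-linear map $\pi : N \twoheadrightarrow N'$ and a $T$-linear map $f : M \lra N'$, and I consider the composite $f \circ r : \Tt^X \lra N'$. By Proposition~\ref{prop:freeproj}, $\Tt^X$ is projective, so there exists a $T$-linear morphism $g : \Tt^X \lra N$ with $\pi \circ g = f \circ r$. Setting $\tilde{f} \coloneqq g \circ s : M \lra N$, I then have
\[
\pi \circ \tilde{f} = \pi \circ g \circ s = f \circ r \circ s = f \circ \id_M = f,
\]
which exhibits the required lift and completes the argument.

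There is no substantial obstacle here; the only point that genuinely uses the hypothesis on $P$ is the $T$-linearity of $\iota_M$, which would fail without the absorber assumption since $t \cdot e_P \neq e_P$ in general. Together with Proposition~\ref{prop:projfact}, this yields the characterization that a non-empty $T$-module is projective if and only if it is a direct factor of a free $T$-module with complementary factor possessing an absorber, which is precisely Theorem~\ref{thm.proj} announced in the introduction.
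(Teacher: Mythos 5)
Your argument is correct and follows essentially the same route as the paper's proof: you use the absorber to make the map $m \lto (m,e_P)$ $T$-linear, thereby realizing $M$ as a $T$-linear retract of the free module $\Tt^X$, and then lift along the epimorphism via the projectivity of $\Tt^X$ (Proposition~\ref{prop:freeproj}). No gaps.
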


\begin{proof}
Let $e \in P$ be an absorber. Then the assignment $\phi:M \lra M \times P$, $m \lto (m,e),$ is a well-defined injective $T$-linear morphism, providing a section for the canonical projection $\gamma:M\times P \lra M$, $(m,p)\lto m$. As a consequence, for every surjective  morphism $g:N \lra Q$ of $T$-modules  and every $T$-linear map $f: M\lra Q$, we can consider the diagram of $T$-linear maps
\[
\begin{gathered}
\xymatrix @R=20pt{
N \ar[r]^-{g} & Q \\
M \times P \ar@<+0.4ex>[r]^-{\gamma} & M. \ar[u]_-{f} \ar@<+0.4ex>[l]^-{\phi}
}
\end{gathered}
\]
By projectivity of $\Tt^X$, there exists $\tilde{f} : \Tt^X \lra N$ such that the diagram
\[
\begin{gathered}
\xymatrix @R=20pt{
N \ar[rr]^-{g} & & Q \\
\Tt^X \ar[r]^-{\cong}_-{\tau} \ar[u]^-{\tilde{f}} & M \times P \ar@<+0.4ex>[r]^-{\gamma} & M \ar[u]_-{f} \ar@<+0.4ex>[l]^-{\phi}
}
\end{gathered}
\]
commutes, \ie $g \circ \tilde{f} = f \circ \gamma \circ \tau$. If we set $\hat{f} \coloneqq \tilde{f} \circ \tau^{-1} \circ \phi$, then
\[
g \circ \hat{f} = g \circ \tilde{f} \circ \tau^{-1} \circ \phi = f \circ \gamma \circ \phi = f,
\]
whence $f: M \lra Q$ can be lifted to a $T$-linear map $\hat{f}: M \lra N$ along $g$, \ie $g \circ \hat{f} = f$, proving that $M$ is projective.
\end{proof}

\begin{theorem}\label{thm.proj}
A $T$-module $M$ is projective if and only if there exists a $T$-module with absorber $P$ such that $M \times P$ is a free $T$-module.
\end{theorem}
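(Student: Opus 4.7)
The plan is to derive both implications as essentially immediate consequences of Propositions~\ref{prop:projfact} and~\ref{prop:factproj}, with only a brief ad hoc argument needed to cover the empty-module case that Proposition~\ref{prop:projfact} explicitly excludes.

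For the forward direction, assume that $M$ is a projective $T$-module. If $M$ is non-empty, then Proposition~\ref{prop:projfact} directly supplies a set $X$ and a $T$-module with absorber $P$ such that $M\times P\cong\Tt^X$, and we are done. If instead $M=\varnothing$, I would take $P=\star$, the singleton $T$-module, whose unique element is vacuously an absorber. Since the forgetful functor $U:\pMod{T}\lra\ahrd$ preserves products by Proposition~\ref{prop:colimits}, the underlying heap of $M\times\star$ is $\varnothing\times\star=\varnothing$; and since the free $T$-module functor is a left adjoint, it sends the initial object of $\Set$ to the initial object of $\pMod{T}$, so $\Tt^{\varnothing}\cong\varnothing$. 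Hence $M\times\star\cong\Tt^{\varnothing}$ is free, completing this direction.

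The backward direction is exactly the statement of Proposition~\ref{prop:factproj}, whose formulation and proof impose no non-emptiness assumption on $M$: given $\Tt^X\cong M\times P$ with an absorber $e\in P$, the injection $\phi:M\lra M\times P$, $m\lto(m,e)$, is a $T$-linear section of the canonical projection $M\times P\lra M$, and combined with the projectivity of free $T$-modules (Proposition~\ref{prop:freeproj}) it allows one to lift any $T$-linear map out of $M$ along a surjection of $T$-modules.

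Since both directions are packaged in results already established, the only genuine verification is the empty-module case, which is routine; there is no serious obstacle to overcome.
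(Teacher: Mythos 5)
Your proof is correct and follows essentially the same route as the paper, which deduces the theorem directly from Propositions~\ref{prop:projfact} and~\ref{prop:factproj}. Your explicit treatment of the empty module (taking $P=\star$ and observing $\varnothing\times\star\cong\Tt^{\varnothing}$) is a welcome refinement, since Proposition~\ref{prop:projfact} is stated only for non-empty $M$ and the paper's one-line proof passes over this case silently.
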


\begin{proof}
It follows from Propositions \ref{prop:projfact} and \ref{prop:factproj}.
\end{proof}

\begin{proposition}\label{prop:facttiny}
Let $P$ be a tiny $T$-module with dual basis $\{(e_1,\ldots,e_s),(\phi_1,\ldots,\phi_s)\}$. Then there exists a $T$-module $Q$ with an absorber, such that $P \times Q \cong T^s$.
\end{proposition}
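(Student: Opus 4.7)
The plan is to package the dual basis data into an injective $T$-linear map $\phi: P \lra T^{s}$ equipped with a $T$-linear retraction, and then obtain the desired decomposition from Proposition~\ref{prop:prod} applied to the quotient $T^{s}/\phi(P)$. Note that $P$ is automatically non-empty, since the empty module does not admit the DBP (Example~\ref{ex:dbp1}).

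The first step is to define the maps
\[
\phi: P \lra T^{s}, \quad p \lto \big(\phi_1(p), \ldots, \phi_s(p)\big), \qquad \gamma: T^{s} \lra P, \quad (t_1, \ldots, t_s) \lto \LL t_1 \cdot e_1, \ldots, t_s \cdot e_s \RR.
\]
The $T$-linearity of $\phi$ follows from the component-wise structure of the direct product $T^{s}$ together with the $T$-linearity of each $\phi_i$. For $\gamma$, heap-preservation follows from the distributivity~\eqref{module2} of the action and the superbracket identity~\eqref{eq:superbracket}, while $T$-linearity is an instance of associativity~\eqref{module1} combined with~\eqref{module3}. The defining identity~\eqref{eq:dbp} of the dual basis translates precisely into $\gamma \circ \phi = \id_P$, so $\phi$ is injective and $\gamma$ is a retraction.

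The second step is to form the quotient $T$-module $Q \coloneqq T^{s}/\phi(P)$. Since $\phi(P) \subseteq T^{s}$ is a genuine submodule (being the image of a $T$-linear map, and not merely a sub-heap), the quotient is a well-defined $T$-module in which the class of $\phi(P)$ is an absorber, as recalled in Section~\ref{sec.truss}. Denoting by $\psi: T^{s} \lra Q$ the canonical projection, choosing $e = \overline{\phi(P)} \in Q$ gives $\ker_e(\psi) = \phi(P) = \im(\phi)$, so that the sequence $P \xrightarrow{\phi} T^{s} \xrightarrow{\psi} Q$ is exact in the sense of Section~\ref{sub.ses}. Since $\phi$ admits $\gamma$ as a retraction and $\psi$ is surjective, Proposition~\ref{prop:prod} produces an isomorphism of $T$-modules $T^{s} \cong P \times Q$, with $Q$ the desired module with absorber.

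I do not anticipate any substantive obstacle here: the proof is essentially a direct translation, in which the dual basis provides exactly the splitting datum required by Proposition~\ref{prop:prod}. It is worth remarking that the conclusion is strictly stronger than what Theorem~\ref{thm.proj} gives for $P$ (which is projective by Proposition~\ref{prop:tinyproj}), since $T^{s}$ here denotes the direct product rather than the free module $\Tt^{\{1,\ldots,s\}}$, and finite products of $T$-modules differ in general from finite coproducts.
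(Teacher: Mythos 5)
Your proposal is correct and follows essentially the same route as the paper's proof: the same maps $\phi$ and $\gamma$ (the paper calls the retraction $\pi$), the same quotient $Q = T^s/\!\sim_{\phi(P)}$, the same appeal to the non-emptiness of $P$ from Example~\ref{ex:dbp1}, and the same application of Proposition~\ref{prop:prod} to the resulting split exact sequence. No gaps.
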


\begin{proof}
By the universal property of the direct product, there exists a unique morphism of $T$-modules $\phi:P \lra T^s$ such that $\pi_k \circ \phi = \phi_k$, where $\pi_k:T^s \lra T$ is the projection on the $k$-th factor. The other way around, consider the assignment
\[
\pi:T^s \lra P, \qquad (t_1,\ldots,t_s) \lto \left[t_1\cdot e_1, \ldots, t_s\cdot e_s\right].
\]
Since
\[
\begin{aligned}
\pi\Big(\big[(t_1,\ldots,t_s),&(t'_1,\ldots,t'_s),(t''_1,\ldots,t''_s)\big]\Big) = \pi\Big(\big([t_1,t'_1,t''_1],\ldots,[t_s,t'_s,t''_s]\big)\Big) \\
&= \big[[t_1,t'_1,t''_1]\cdot e_1, \ldots, [t_s,t'_s,t''_s]\cdot e_s\big]\\
& = \big[[t_1\cdot e_1,t'_1\cdot e_1,t''_1\cdot e_1], \ldots, [t_s\cdot e_s,t'_s\cdot e_s,t''_s\cdot e_s]\big] \\
&\stackrel{\eqref{eq:superbracket}}{=}\big[[t_1\cdot e_1,\ldots,t_s\cdot e_s], [t'_1\cdot e_1,\ldots,t'_s\cdot e_s], [t''_1\cdot e_1,\ldots,t''_s\cdot e_s]\big] \\
&= \big[\pi(t_1,\ldots,t_s), \pi(t'_1,\ldots,t'_s), \pi(t''_1,\ldots,t''_s)\big],
\end{aligned}
\]
for all $(t_1,\ldots,t_s),(t'_1,\ldots,t'_s),(t''_1,\ldots,t''_s)\in T^s$ and
\[
\begin{aligned}
\pi\left(t\cdot(t_1,\ldots,t_s)\right) &= \pi\left((tt_1,\ldots,tt_s)\right) = \left[tt_1\cdot e_1, \ldots, tt_s\cdot e_s\right] \\
&= t\cdot \left[t_1\cdot e_1, \ldots, t_s\cdot e_s\right] = t\cdot \pi\left((t_1,\ldots,t_s)\right),
\end{aligned}
\]
for all $t\in T$, $\pi$ is a morphism of left $T$-modules satisfying $\pi \circ \phi = \id_P$ (because $P$ satisfies the DBP). Thus, $\phi$ is injective and we may identify $P$ with the submodule $\phi(P)\subseteq T^s$. As in the proof of Proposition \ref{prop:projfact}, consider $Q\coloneqq T^s/\sim_{\phi(P)} \cong T^s/P$, which is a $T$-module with absorber, and the quotient map $\psi:T^s \lra Q$. By \ref{item:dbp1} of Example \ref{ex:dbp1}, $P$ is non-empty, and so the sequence
\[
\xymatrix{
P \ar[r]^-{\phi} & T^s \ar[r]^-{\psi} \ar@/^3ex/[l]^-{\pi} & Q
}
\]
is split exact with $\psi$ surjective. By Proposition \ref{prop:prod}, $T^s \cong P \times Q$ as $T$-modules.
\end{proof}

Differently from what we have seen for projective modules, it seems that the converse of Proposition \ref{prop:facttiny} requires stronger hypotheses.

\begin{proposition}
Let $T$ be a unital truss with left absorber $a \in T$. If there exist an odd positive integer $s=2k+1$ and $T$-modules $P,Q$ such that $T^s \cong P \times Q$ as left $T$-modules, then both $P$ and $Q$ are tiny $T$-modules.
\end{proposition}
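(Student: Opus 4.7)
The plan is to reduce the statement to two facts: that $T^s$ is itself tiny under the hypotheses, and that any direct factor of a tiny module (in the sense of a categorical retract in $\lmod{T}$) inherits the DBP. The first fact generalizes Example~\ref{ex:dbp1} from $s=3$ to arbitrary odd $s=2k+1$, and the second is a formal transfer-of-structure argument.

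First I would construct an explicit dual basis for $T^s$. Mimicking Example \ref{ex:dbp1}, set $e_i = (a,\ldots,a,1_T,a,\ldots,a)$ with $1_T$ placed in the $i$-th coordinate. Since $a$ is a left absorber of $T$, $xa=a$ for every $x\in T$, so $\vec{a}=(a,\ldots,a)$ is an absorber of the $T$-module $T^s$ and the $j$-th component of $\phi_i(\vec t)\cdot e_i$ equals $a$ whenever $i\neq j$ and equals $\phi_j(\vec t)$ when $i=j$. Component-wise, the DBP identity thus reduces to requiring that $[a,\ldots,a,\phi_j(\vec t),a,\ldots,a]_k = t_j$ with $\phi_j$ placed in position $j$ out of $s$; a direct check with Mal'cev and the identification $[a,x,a]=x^{-1}$ in the $a$-retract shows that one may take $\phi_j(\vec t)=t_j$ for odd $j$ and $\phi_j(\vec t)=[a,t_j,a]$ for even $j$. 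Both are manifestly $T$-linear, yielding a dual basis of length $s$.

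Next I would exhibit $P$ and $Q$ as retracts of $T^s$. Pick a $T$-linear isomorphism $\alpha:T^s\to P\times Q$. Since $\vec{a}$ is an absorber of $T^s$ and $\alpha$ is $T$-linear, $\alpha(\vec{a}) = (a_P,a_Q)$ is an absorber of $P\times Q$; the component-wise action on the product forces $a_P\in P$ and $a_Q\in Q$ to be absorbers individually. The assignment $\iota_P:P\to P\times Q,\ p\mapsto (p,a_Q)$, is then a $T$-linear heap morphism (the $T$-linearity relying crucially on $a_Q$ being an absorber) and satisfies $\pi_P\circ\iota_P=\id_P$ for $\pi_P$ the canonical projection, so $P$ is a retract of $T^s$ via $\pi_P\circ\alpha$ and $\alpha^{-1}\circ\iota_P$. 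The analogous construction with $a_P$ realizes $Q$ as a retract of $T^s$.

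Finally I would transfer the dual basis along the retract: define $\tilde{e}_i \coloneqq (\pi_P\circ\alpha)(e_i)\in P$ and $\tilde{\phi}_i \coloneqq \phi_i\circ\alpha^{-1}\circ\iota_P\in \lhom{T}{P}{T}$. For $p\in P$, letting $u=\alpha^{-1}(\iota_P(p))$ and using that $\pi_P\circ\alpha$ is a $T$-linear heap morphism to pull it outside the bracket,
\[
\big[\tilde{\phi}_i(p)\cdot\tilde{e}_i\big]_{i=1}^{s} = (\pi_P\circ\alpha)\big([\phi_i(u)\cdot e_i]_{i=1}^{s}\big) = (\pi_P\circ\alpha)(u) = p,
\]
by the DBP of $T^s$ and $\pi_P\circ\iota_P=\id_P$. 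This gives $P$ a dual basis of length $s$, so $P$ is tiny; the argument for $Q$ is entirely symmetric. The only genuinely technical step is the case analysis producing the dual basis of $T^s$ (parity of the position determining whether to insert the $[a,-,a]$ twist); once $T^s$ is tiny, the retract argument is formal and relies only on the interaction between absorbers and the component-wise action on products.
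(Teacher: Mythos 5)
Your proof is correct and follows essentially the same route as the paper: realize $P$ (respectively $Q$) as a $T$-linear retract of $T^s$ via $p\mapsto(p,a_Q)$ — which is $T$-linear precisely because $a_Q$ is an absorber — and transport the dual basis of $T^s$ built from the elements $1_i=(a,\ldots,1_T,\ldots,a)$ through the retraction. If anything, your explicit parity analysis (taking $\phi_j(\vec t)=t_j$ for odd $j$ and $\phi_j(\vec t)=[a,t_j,a]$ for even $j$) is more careful than the paper's own computation, whose displayed identity $[\phi_1(p)\cdot 1_1,\ldots,\phi_s(p)\cdot 1_s]=(\phi_1(p),\ldots,\phi_s(p))$ silently requires exactly this twist in the even positions, since the $j$-th component of the left-hand side is $(-1)^{j+1}\phi_j(p)$ in the $a$-retract.
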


\begin{proof}
If $T$ admits a left absorber $a$ and $\gamma:T^s \lra P \times Q$ is an isomorphism as left $T$-modules, then $(a,a,\ldots,a) \in T^s$ is a left absorber and so $a_P\coloneqq\pi_P\left(\gamma((a,a,\ldots,a))\right) \in P$ and $a_Q\coloneqq\pi_Q\left(\gamma((a,a,\ldots,a))\right) \in Q$ are absorbers as well.

Now, set  $1_i \coloneqq \left(a,\ldots,a,1_T,a,\ldots,a\right)$ where $1_T$ appears in the $i$-th position. 
Consider the projection $\pi \coloneqq \left(T^s \stackrel{\gamma}{\lra} P \times Q \stackrel{\pi_P}{\lra} P\right)$, the elements $e_k \coloneqq \pi(1_k) \in P$ and the compositions $\phi_k\coloneqq \left(P \lra P \times Q \stackrel{\gamma^{-1}}{\lra} T^s \stackrel{\pi_k}{\lra} T\right), p \lto \pi_k\gamma(p,a_Q),$ for all $k=1,\ldots,s$. For all $p \in P$, one finds that
\[
\begin{gathered}
\left[\phi_1(p)\cdot e_1, \ldots, \phi_s(p)\cdot e_s\right] = \pi\big(\left[\phi_1(p)\cdot 1_1, \ldots, \phi_s(p)\cdot 1_s\right]\big) = \pi\big(\left(\phi_1(p), \ldots, \phi_s(p)\right)\big) = p,
\end{gathered}
\]
and so $P$ satisfies the DBP. The proof for $Q$ is analogous.
\end{proof}

\section*{Acknowledgements}
The research of Tomasz Brzezi\'nski  is partially supported by the National Science Centre, Poland, grant no. 2019/35/B/ST1/01115.

Paolo Saracco is a Charg\'e de Recherches of the Fonds de la Recherche Scientifique - FNRS and a member of the ``National Group for Algebraic and Geometric Structures and their Applications'' (GNSAGA-INdAM). PS would like to thank the members of the Department of Mathematics of the Swansea University for their warm hospitality during his stay in fall 2019.


\begin{thebibliography}{99}
\bibitem{Bae:ein} 
R.\ Baer, Zur Einf\"uhrung des Scharbegriffs, {\em J.\ Reine Angew.\ Math.} \textbf{160} (1929), 199--207.

\bibitem{Ber:inv} 
G.~M.~Bergman, \emph{An invitation to general algebra and universal constructions}. Second edition. Universitext. Springer, Cham, 2015.
 

\bibitem{Borceaux} 
F.~Borceux, \emph{Handbook of categorical algebra. 2. Categories and structures}. Encyclopedia of Mathematics and its Applications, \textbf{51}. Cambridge University Press, Cambridge, 1994.

\bibitem{BrBr}
S.~Breaz, T.\ Brzezi\'nski, \emph{The Baer-Kaplansky theorem for all abelian groups and modules}, Bull.\ Math.\  Sci.\  (2021). Online ready, \href{https://doi.org/10.1142/S1664360721500053}{doi: 10.1142/S1664360721500053}

\bibitem{Brz:tru} 
T.\ Brzezi\'nski, \emph{Trusses: Between braces and rings}, Trans.\ Amer.\ Math.\  Soc.\ {\bf 372} (2019), 4149--4176.

\bibitem{Brz:par} 
T.\ Brzezi\'nski, \emph{Trusses: Paragons, ideals and modules}, J. Pure Appl. Algebra {\bf 224}  (2020), 106258.

\bibitem{BrzMerRyb:pre}
T.\ Brzezi\'nski, S.\ Mereta \& B.\ Rybo\l owicz, \emph{From pre-trusses to skew braces}, {\em Preprint} arXiv: 2007.05761 (2020), Publ. Mat.\ {\em in press}.

\bibitem{BrzRyb:cong} 
T.\ Brzezi\'nski \& B.\ Rybo\l owicz, \emph{Congruence classes and extensions of rings with an application to braces}, Commun.\ Contemp.\ Math.\ {\bf 23} (2021), Paper No.\ 2050010, 24 pp.

\bibitem{BrzRyb:mod} 
T.\ Brzezi\'nski \& B.\ Rybo\l owicz, \emph{Modules over trusses vs modules over rings: direct sums and free modules}, Algebras Repr.\ Theory {\bf 25} (2022), 1--23.

\bibitem{ColAnt}
I.\ Colazzo, A.\ Van Antwerpen, \emph{The algebraic structure of left semi-trusses}. J.\ Pure Appl.\ Algebra \textbf{225} (2021), no. 2, 106467.

\bibitem{Fu15} L.\ Fuchs, {\em Abelian Groups},
Springer, Cham, 2015. 

\bibitem{Kelly}
G.~M.~Kelly, \emph{Basic concepts of enriched category theory}. Reprint of the 1982 original. Repr. Theory Appl. Categ. No. \textbf{10} (2005).

\bibitem{MacLane} 
S.\ Mac Lane, {\em Categories for the Working Mathematician} (Second ed.), Springer, New York, 1998.

\bibitem{Mitchell}
B.~Mitchell, \emph{Rings with several objects}. Adv.\ Math.\ \textbf{8} (1972), 1--161.


\bibitem{Pru:the} 
H.\ Pr\"ufer, \emph{Theorie der Abelschen Gruppen. I.\ Grundeigenschaften}, Math.\ Z. 20:165--187, 1924.

\bibitem{Rum:bra} 
W.\ Rump,  \emph{Braces, radical rings, and the quantum Yang-Baxter equation}, J.\ Algebra {\bf 307} (2007), 153--170.


\bibitem{Street}
R.~Street, \emph{Elementary cosmoi. I}. Category Seminar (Proc. Sem., Sydney, 1972/1973), pp. 134--180. Lecture Notes in Math., Vol. \textbf{420}, Springer, Berlin, 1974.

\bibitem{Sus:the} 
A. K.\ Su\v skevi\v c, {\em Theory of Generalized Groups}, Goc.\ Nau\v cno-Techn.\ Izdat.\ Ukrainy, Kharkov, 1937.

\end{thebibliography}
\end{document}